\documentclass[11pt]{amsart}
\usepackage{amscd}
\usepackage{graphicx}
\newtheorem{theorem}{Theorem}[section]
\newtheorem{proposition}[theorem]{Proposition}
\newtheorem{corollary}[theorem]{Corollary}
\newtheorem{lemma}[theorem]{Lemma}
\newtheorem{remark}[theorem]{Remark}

\newtheorem{example}[theorem]{Example}

%

\def\ml{\mathcal{C}}
\def\ml1{\mathcal{C}^1}
\def\mlb1{\mathcal{C}_{b}^{1}}

\def\g{\gamma}
\def\mc{\mathbb{C}}

\def\b{\beta}
\def\s{\sigma}
\def\a{\alpha}

\def\l{\lambda}

\def\frk{\frak}               

\def\Phi{{\frk n}}

%

\begin{document}
\textwidth=13cm

\title{Braid groups in complex projective spaces}

\thanks{Key words and phrases: complex projective space, configuration spaces, braid groups, Pappus configuration\\
  This research is partially supported by Higher Education Commission, Pakistan.\\
2010 AMS Classification Primary: 20F36, 52C35, 57M05; Secondary: 51A20}
\author{ BARBU BERCEANU$^{1,2}$ ,\,\,SAIMA PARVEEN $^{2}$}
\address{$^{1}$Abdus Salam School of Mathematical Sciences,
 GC University, Lahore-Pakistan, and
 Institute of Mathematics Simion Stoilow, Bucharest-Romania\\(Permanent address).}
\email {Barbu.Berceanu@imar.ro}
\address{$^{2}$Abdus Salam School of Mathematical Sciences,
 GC University, Lahore-Pakistan.}
\email {saimashaa@gmail.com}
\maketitle
 \pagestyle{myheadings} \markboth{\centerline {\scriptsize
BARBU BERCEANU,\,\,\,SAIMA PARVEEN   }} {\centerline {\scriptsize
Braid groups in complex projective spaces }}
\begin{abstract} We describe the fundamental groups of ordered and unordered $k-$point sets in $\mc P^n$ generating a projective subspace of dimension $i$. We apply these to study connectivity of more complicated configurations of points.
\end{abstract}

\maketitle
\section{INTRODUCTION}
Let $M$ be a manifold and $\mathcal{F}_k(M)$ be {\em the ordered
configuration space} of $k$ distinct points
$\{(x_1,\ldots,x_k)\in M^k|x_i\neq x_j,\,\,i\neq j\}$. There is a
proper right action of $\Sigma_k$, the symmetric group of order $k$, on $\mathcal{F}_k(M)$.
The orbit space $\mathcal{F}_{k}(M)/\Sigma _k$ is the {\em unordered configuration space}, denoted
$\mathcal{C}_k(M)$, and the natural projection $\mathcal{F}_k(M)\longrightarrow \mathcal{C}_k(M)$ is a regular
covering. For a simply connected manifold $M$ of dimension $\geq 3$, the {\em pure braid group} $\pi_1(\mathcal{F}_k(M))$ is trivial and the {\em braid group}  $\pi_1(\mathcal{C}_k(M))$ is isomorphic to $\Sigma_k$; as an example, $\pi_1(\mathcal{F}_{k}(\mc P^k))=1$ for $k\geq 2$.
In low dimensions there are non trivial pure braids. The pure braid
group of the plane, denoted by $\mathcal{PB}_n$, has the presentation
\cite{F}
$$\pi_1(\mathcal{F}_n(\mc))=\mathcal{PB}_n\cong\big<\a_{ij}\,,\,\,1\leq i<j\leq
n\big|(YB\,3)_n,(YB\,4)_n\big>$$ where the Yang-Baxter relations
$(YB\,3)_n$ and $(YB\,4)_n$ are, for any $1\leq i <j< k\leq n$,
 \begin{equation*}
(YB\,3)_n: \a_{ij}\a_{ik}\a_{jk}=\a_{ik}\a_{jk}\a_{ij}=\a_{jk}\a_{ij}\a_{ik}
\end{equation*}
and, for any $1\leq i<j<k<l\leq n$,
$$
(YB\,4)_n: [\a_{kl},\a_{ij}]=[\a_{il},\a_{jk}]=[\a_{jl},\a_{jk}^{-1}\a_{ik}\a_{jk}]=[\a_{jl},\a_{kl}\a_{ik}\a_{kl}^{-1}]=1.
$$
 The  braid group of the plane, denoted by $\mathcal{B}_n$, has the classical Artin  presentation \cite{A} $$\pi_1(\mathcal{C}_n(\mc))=\mathcal{B}_n\cong\big<\s_i,\,i=1,\dots,{n-1}|(A)_n\big>$$ where the Artin relations $(A)_n$ are
$$(A)_n:\left\{
  \begin{array}{ll}
   &\s_i\s_j=\s_j\s_i,\,\,\,\,\,\,\forall\,\,\,i,j=1,\ldots,n-1\,\,\,\hbox{with}\,\,|i-j|\geq 2,\\
  &\s_i\s_{i+1}\s_i=\s_{i+1}\s_i\s_{i+1}\,\,\,\,\hbox{for}\,\,\,i=1,\ldots,n-2.
  \end{array}
\right.
$$
The pure braid and the braid groups of $S^2\approx\mc P^1$ have the
presentations \cite{B2},\cite{F}
$$\pi_1(\mathcal{F}_{k+1}(S^2))\cong\big<\a_{ij}, 1\leq i<j\leq k\,\big|(YB\,3)_k,(YB\,4)_k, D_{k}^2=1\big>$$
$$\pi_1(\mathcal{C}_{k+1}(S^2))\cong\big<\s_i,1\leq i\leq k\,\big|(A)_{k+1},\,\,\s_1\s_2\ldots\s_k^2\ldots\s_2\s_1=1\big>,$$
where $D_k=\a_{12}\a_{13}\a_{23}\ldots\a_{1k}\ldots\a_{k-1,k}$.

The inclusion morphisms $\mathcal{PB}_n\rightarrow\mathcal{B}_n$ for $\mc$ and $\mc P^1$ are given by (see \cite{B2})
$$\a_{ij}\mapsto\s_{j-1}\s_{j-2}\ldots\s_{i+1}\s_{i}^2\s_{i+1}^{-1}\ldots\s_{j-1}^{-1}\,\,$$
(due to these inclusions, we can identify the pure braid $D_k$ with $\Delta_k^2$, the square of Garside braid).

Using the geometrical structure of projective spaces we stratify the
configuration spaces $\mathcal{F}_k(\mc P^n)$ and
$\mathcal{C}_k(\mc P^n)$ with complex submanifolds as follows:
$$\mathcal{F}_k(\mathbb{C}P^n)=\mathop{\coprod}\limits_{i=1}^{n} \mathcal{F}_{k}^{i,n}\,\,,$$
where
$ \mathcal{F}_{k}^{i,n}$ is the ordered configuration space of all $k$ points in $\mc P^n$
generating a subspace  of dimension $i$, and
$$\mathcal{C}_k(\mathbb{C}P^n)=\mathop{\coprod}\limits_{i=1}^{n} \mathcal{C}_{k}^{i,n}\,\,,$$
where
$ \mathcal{C}_{k}^{i,n}$ is the unordered configuration space of all $k$ points in $\mc P^n$
generating a subspace of dimension $i$. Obviously, $\mathcal{C}_{k}^{i,n}\cong \mathcal{F}_{k}^{i,n}/\Sigma_k$.
\begin{theorem}\label{th:1}
The spaces $\mathcal{F}_{k}^{i,n}$ are simply connected with the following exceptions
\begin{enumerate}
  \item for $k\geq2$,
  $$\pi_1(\mathcal{F}_{k+1}^{1,1})\cong\big<\a_{ij},\,\,\,1\leq i<j\leq k\,\big|(YB\,3)_k,(YB\,4)_k,D_{k}^2=1\big>;$$
  \item for $k\geq 3$ and $ n\geq 2$,
  $$\pi_1(\mathcal{F}_{k+1}^{1,n})\cong\big<\a_{ij},\,\,\,1\leq i<j\leq k\,\big|(YB\,3)_k,(YB\,4)_k,\,D_{k}=1\big>.$$
\end{enumerate}
\end{theorem}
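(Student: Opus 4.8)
The plan is to fibre each stratum over a Grassmannian of subspaces and to extract $\pi_1$ from the homotopy exact sequence. Assigning to a configuration of $m$ points the $i$-dimensional projective subspace it generates gives a locally trivial bundle
\[
\mathcal{F}_{m}^{i,i}\hookrightarrow\mathcal{F}_{m}^{i,n}\xrightarrow{\;\pi\;}\mathrm{Gr}(i+1,n+1),
\]
with fibre the space of $m$ points generating $\mc P^{i}$ and structure group $\mathrm{PGL}(i+1,\mc)$; local triviality is inherited from the tautological $\mc P^{i}$-bundle $\mathbb{P}(S)\to\mathrm{Gr}(i+1,n+1)$. As every complex Grassmannian is simply connected, the exact sequence shows $\pi_{1}(\mathcal{F}_{m}^{i,n})$ is a quotient of $\pi_{1}(\mathcal{F}_{m}^{i,i})$, so it is enough to understand the full-span fibre.

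For $i\geq 2$ I would prove that $\mathcal{F}_{m}^{i,i}$ is simply connected, whence so is $\mathcal{F}_{m}^{i,n}$. When $m=i+1$ the group $\mathrm{PGL}(i+1,\mc)$ acts transitively on projective frames, identifying $\mathcal{F}_{i+1}^{i,i}\cong\mathrm{PGL}(i+1,\mc)/(\mc^{*})^{i}$; the homotopy sequence of this bundle reduces to $\mathbb{Z}^{i}\to\pi_{1}(\mathrm{PGL}(i+1,\mc))=\mathbb{Z}/(i+1)\to\pi_{1}(\mathcal{F}_{i+1}^{i,i})\to 1$, and a direct check that the diagonal torus already surjects onto $\pi_{1}(\mathrm{PGL}(i+1,\mc))$ forces the quotient to be trivial. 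When $m\geq i+2$ the degenerate locus (configurations lying in a hyperplane) is a subvariety of complex codimension $m-i\geq 2$ inside $\mathcal{F}_{m}(\mc P^{i})$, hence of real codimension $\geq 4$; removing it does not change $\pi_{1}$, and $\mathcal{F}_{m}(\mc P^{i})$ is itself simply connected because $\mc P^{i}$ is simply connected of real dimension $\geq 4$ (the fact quoted in the introduction). This settles all strata with $i\geq 2$.

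It remains to treat $i=1$ (take $m=k+1$), where the fibre $\mathcal{F}_{k+1}(\mc P^{1})=\mathcal{F}_{k+1}(S^{2})$ is \emph{not} simply connected; this is the source of both exceptions. For $n=1$ the base $\mathrm{Gr}(2,2)$ is a point, so $\mathcal{F}_{k+1}^{1,1}=\mathcal{F}_{k+1}(S^{2})$ and we recover the sphere braid presentation of case (1) directly. For $n\geq 2$ the base $\mathrm{Gr}(2,n+1)$ is simply connected with $\pi_{2}=\mathbb{Z}$, so the exact sequence collapses to
\[
\pi_{2}(\mathrm{Gr}(2,n+1))\xrightarrow{\;\partial\;}\pi_{1}(\mathcal{F}_{k+1}(S^{2}))\longrightarrow\pi_{1}(\mathcal{F}_{k+1}^{1,n})\longrightarrow 1,
\]
so that $\pi_{1}(\mathcal{F}_{k+1}^{1,n})\cong\pi_{1}(\mathcal{F}_{k+1}(S^{2}))/\mathrm{im}\,\partial$. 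Everything comes down to computing $\mathrm{im}\,\partial$.

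The main obstacle is this boundary computation, which I would carry out through the structure group. Since the bundle is associated to the principal $\mathrm{PGL}(2,\mc)$-bundle $P$ of projective identifications of the lines with $\mc P^{1}$, the connecting map factors as $\partial=\rho_{*}\circ\partial_{P}$, where $\partial_{P}\colon\pi_{2}(\mathrm{Gr}(2,n+1))\to\pi_{1}(\mathrm{PGL}(2,\mc))=\mathbb{Z}/2$ is the connecting map of $P$ and $\rho_{*}\colon\pi_{1}(\mathrm{PGL}(2,\mc))\to\pi_{1}(\mathcal{F}_{k+1}(S^{2}))$ is induced by the action on configurations. To see $\partial_{P}$ is onto, restrict $\mathbb{P}(S)$ to a generator of $\pi_{2}$, realised as the pencil of lines through a fixed point inside a fixed $\mc P^{2}$: there the tautological bundle is $\mathcal{O}\oplus\mathcal{O}(-1)$, so the $\mc P^{1}$-bundle is the Hirzebruch surface $\mathbb{P}(\mathcal{O}\oplus\mathcal{O}(-1))$, whose degree difference is odd and whose clutching datum is therefore the non-trivial class in $\pi_{1}(\mathrm{PGL}(2,\mc))$. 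To identify $\rho_{*}$, note $\mathrm{PGL}(2,\mc)\simeq\mathrm{SO}(3)$ and that the generator of $\pi_{1}$ is a full $2\pi$ rotation about an axis; placing one marked point at a pole and projecting stereographically turns this rotation into the full twist of the remaining $k$ points, so $\rho_{*}$ sends the generator to $D_{k}$ (consistently, $D_{k}^{2}=1$ already holds in $\pi_{1}(\mathcal{F}_{k+1}(S^{2}))$, matching the order of the source). Combining, $\mathrm{im}\,\partial$ is the cyclic, normal subgroup $\langle D_{k}\rangle$, so its normal closure is itself and quotienting the sphere pure braid group by it replaces $D_{k}^{2}=1$ by $D_{k}=1$, giving exactly the presentation of case (2); when $k=2$ one has $\pi_{1}(\mathcal{F}_{3}(S^{2}))=\mathbb{Z}/2=\langle D_{2}\rangle$, which is killed entirely, explaining why $\mathcal{F}_{3}^{1,n}$ is simply connected and the hypothesis $k\geq 3$. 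The delicate points I would write out carefully are the factorisation $\partial=\rho_{*}\circ\partial_{P}$ for an associated bundle and the geometric identification of the $2\pi$ rotation with the braid $D_{k}$.
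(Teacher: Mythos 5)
Your argument is correct, and while it shares the paper's skeleton (fibre each stratum over the Grassmannian via the span map and read $\pi_1$ off the homotopy exact sequence, reducing everything to identifying $\mathrm{im}\,\partial=\langle D_k\rangle$ inside $\pi_1(\mathcal{F}_{k+1}(\mc P^1))$), it diverges from the paper in the two technical cores. For the strata with $i\geq 2$ the paper covers $\mathcal{F}_{k}^{i,i}$ by the sets $\mathcal{F}_{k}^{\{A\},i}$, applies Seifert--Van Kampen, and needs a chain of lemmas showing every intersection $\mathcal{F}_{k}^{\mathcal{A},i}$ is path-connected (via fibrations with local sections); you instead handle $m=i+1$ by the homogeneous-space presentation $\mathcal{F}_{i+1}^{i,i}\cong \mathrm{PGL}(i+1,\mc)/(\mc^{*})^{i}$ (a quotient of a connected group by a maximal torus, hence simply connected) and $m\geq i+2$ by observing that the degenerate locus has complex codimension $m-i\geq 2$ in the simply connected space $\mathcal{F}_m(\mc P^i)$, so removing it does not change $\pi_1$. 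This is shorter and avoids the combinatorics of the covering, at the cost of not producing the auxiliary connectivity results ($\mathcal{F}_k^{\mathcal{A},n}$ path-connected) that the paper reuses. For the boundary map in the exceptional case $i=1$, $n\geq 2$, the paper writes an explicit lift $\widetilde g$ of the generator of $\pi_2(Gr^1(\mc P^2))$ and identifies the resulting loop with $D_k$ by an inductive braid-diagram computation (its Lemma \ref{l.6}); you factor $\partial=\rho_*\circ\partial_P$ through the structure group, show $\partial_P$ is onto $\pi_1(\mathrm{PGL}(2,\mc))=\ZZ_2$ by recognising the restriction of $\PP(S)$ to a pencil of lines as the nontrivial Hirzebruch surface $\PP(\mathcal{O}\oplus\mathcal{O}(-1))$, and identify $\rho_*$ of the generator with the full twist $D_k$ via the $2\pi$ rotation in $\mathrm{SO}(3)\simeq \mathrm{PGL}(2,\mc)$. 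The two routes converge on the same geometric fact (rotation about a marked pole equals $D_k=\Delta_k^2$), but yours replaces the explicit lift by clutching-function and associated-bundle naturality arguments; the factorisation $\partial=\rho_*\circ\partial_P$ and the rotation-equals-full-twist identification, which you rightly flag, are standard and do hold, so there is no gap. Your remark that $k=2$ degenerates (since $\pi_1(\mathcal{F}_3(S^2))=\ZZ_2=\langle D_2\rangle$ is killed) correctly accounts for the hypothesis $k\geq 3$ and matches the paper's Example \ref{ex:1}.
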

In this list of non simply connected spaces, only
$\mathcal{F}_{3}^{1,1}$ has finite fundamental group and this is isomorphic to $
\mathbb{Z}_2$.
\begin{corollary}\label{l.1.2}
The first homology groups $H_1(\mathcal{F}_{k}^{i,n})$ are trivial with the following exceptions
\begin{enumerate}
  \item for $k\geq 2$, \,\,\,$H_1(\mathcal{F}_{k+1}^{1,1})=\mathbb{Z}^{{k\choose 2}-1}\oplus\mathbb{Z}_2$;
  \item for $k\geq3 \,\hbox{and}\,\,n\geq2,\,\,\,H_1(\mathcal{F}_{k+1}^{1,n})=\mathbb{Z}^{{k\choose 2}-1}$.
\end{enumerate}
\end{corollary}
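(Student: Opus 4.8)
The plan is to invoke the Hurewicz isomorphism $H_1(X)\cong\pi_1(X)^{\mathrm{ab}}$, which reduces the entire statement to abelianizing the presentations furnished by Theorem~\ref{th:1}. For every pair $(i,n)$ outside the exceptional list, Theorem~\ref{th:1} gives $\pi_1(\mathcal{F}_k^{i,n})=1$, hence $H_1=0$; this disposes of all the ``trivial'' cases at once. It therefore remains to abelianize the two exceptional presentations.

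In the abelianization every generator $\a_{ij}$ commutes with every other, so I first check that the Yang--Baxter relations contribute nothing. Each relation in $(YB\,3)_k$ is a chain of equalities among the three words $\a_{ij}\a_{ik}\a_{jk}$, $\a_{ik}\a_{jk}\a_{ij}$, $\a_{jk}\a_{ij}\a_{ik}$; once the generators commute all three equal $\a_{ij}+\a_{ik}+\a_{jk}$ (writing the group additively), so these relations become vacuous. The relations $(YB\,4)_k$ are commutators $[\,\cdot\,,\,\cdot\,]=1$, which hold automatically in any abelian group. Consequently the abelianized group is the quotient of the free abelian group $\mathbb{Z}^{{k\choose 2}}$ on the generators $\{\a_{ij}\}_{1\le i<j\le k}$ by the single relation coming from $D_k$ (respectively $D_k^2$).

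Next I would evaluate $D_k$ modulo commutators. Since $D_k=\a_{12}\a_{13}\a_{23}\ldots\a_{1k}\ldots\a_{k-1,k}$ is the product of all ${k\choose 2}$ generators, each appearing exactly once, its image is the ``all-ones'' vector $d=\sum_{1\le i<j\le k}\a_{ij}=(1,\ldots,1)\in\mathbb{Z}^{{k\choose 2}}$. In case (2) we quotient $\mathbb{Z}^{{k\choose 2}}$ by $\langle d\rangle$; because the entries of $d$ have greatest common divisor $1$, the vector $d$ is primitive and hence part of a $\mathbb{Z}$-basis, so the quotient is free of rank ${k\choose 2}-1$, giving $H_1(\mathcal{F}_{k+1}^{1,n})\cong\mathbb{Z}^{{k\choose 2}-1}$. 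In case (1) we instead quotient by $\langle 2d\rangle$; changing basis so that $d$ becomes a basis vector turns this into $\mathbb{Z}^{{k\choose 2}}/\langle 2e_1\rangle\cong\mathbb{Z}^{{k\choose 2}-1}\oplus\mathbb{Z}_2$, which is the asserted value of $H_1(\mathcal{F}_{k+1}^{1,1})$.

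The argument is in essence a one-row Smith-normal-form computation, so no step is genuinely hard; the only point demanding care is the primitivity of $d$, which is exactly what guarantees that the free rank drops by precisely one and that the torsion in case (1) is exactly $\mathbb{Z}_2$ rather than a larger cyclic group. A secondary bookkeeping check is that the collapse of $(YB\,3)_k$ and $(YB\,4)_k$ really leaves no residual relations among the $\a_{ij}$ beyond the one coming from $D_k$.
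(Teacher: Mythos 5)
Your proof is correct and takes essentially the same route as the paper: both abelianize the presentations of Theorem~\ref{th:1}, observe that the Yang--Baxter relations become vacuous, and reduce to the single relation $D_k=0$ (resp.\ $2D_k=0$) imposed on the free abelian group $\mathbb{Z}^{{k\choose 2}}$. The paper's substitution of $a_{k-1,k}$ by $a=\sum_{i<j}a_{ij}$ is exactly your primitivity/change-of-basis step, so there is nothing genuinely different between the two arguments.
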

\begin{theorem}\label{th:2}
The fundamental group of $\mathcal{C}_{k}^{i,n}$ is isomorphic to
$\Sigma_k$ with the following exceptions
\begin{enumerate}
  \item for $k\geq 2$,
  $$\pi_1(\mathcal{C}_{k+1}^{1,1})\cong\big<\s_1\ldots\s_{k}|(A )_k,\,\s_1\ldots\s_{k}^2\ldots\s_1=1\big>;$$
\item for $k\geq 3$ and $n\geq 2$, $$\pi_1(\mathcal{C}_{k+1}^{1,n})\cong\big<\s_1\ldots\s_{k}|(A)_k,\,\s_1\ldots\s_{k}^2\ldots\s_1=1,\Delta_{k}^2=1\big>,$$
where $\Delta_k=\s_1\s_2\s_1\s_3\s_2\s_1\ldots\s_{k-1}\ldots \s_1.$
\end{enumerate}
\end{theorem}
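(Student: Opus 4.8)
The plan is to reduce everything to the regular covering $\mathcal{F}_{k}^{i,n}\to\mathcal{C}_{k}^{i,n}$ with deck group $\Sigma_k$ from the introduction. Permuting the points does not change the spanned subspace, so the free $\Sigma_k$-action restricts to each stratum; since $\mathcal{F}_{k}^{i,n}$ is path-connected by Theorem \ref{th:1}, the covering yields the short exact sequence
$$1\longrightarrow\pi_1(\mathcal{F}_{k}^{i,n})\longrightarrow\pi_1(\mathcal{C}_{k}^{i,n})\longrightarrow\Sigma_k\longrightarrow 1.\qquad(\ast)$$
In every case that Theorem \ref{th:1} declares simply connected one has $\pi_1(\mathcal{F}_{k}^{i,n})=1$, and $(\ast)$ gives at once $\pi_1(\mathcal{C}_{k}^{i,n})\cong\Sigma_k$. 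Only the two families with $i=1$ remain.

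First I would dispose of part (1). In $\mc P^1$ any two distinct points already span the whole line, so $\mathcal{C}_{k+1}^{1,1}=\mathcal{C}_{k+1}(\mc P^1)=\mathcal{C}_{k+1}(S^2)$; its fundamental group is the braid group of the sphere recalled in the introduction, which is exactly the asserted presentation. Nothing further is needed here.

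For part (2) I would compare the sequence $(\ast)$ for $n=1$ and for $n\geq 2$ (with $k$ replaced by $k+1$) using the inclusion $\iota\colon\mc P^1\hookrightarrow\mc P^n$ of a fixed line. Because $\iota$ is $\Sigma_{k+1}$-equivariant on configuration spaces, it produces a commuting ladder between
$$1\to\pi_1(\mathcal{F}_{k+1}^{1,1})\to\pi_1(\mathcal{C}_{k+1}^{1,1})\to\Sigma_{k+1}\to 1$$
$$1\to\pi_1(\mathcal{F}_{k+1}^{1,n})\to\pi_1(\mathcal{C}_{k+1}^{1,n})\to\Sigma_{k+1}\to 1$$
whose right vertical map is the identity. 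By Theorem \ref{th:1} the two pure-braid groups have the same generators $\a_{ij}$ and the same $(YB\,3)_k,(YB\,4)_k$ relations, differing only by $D_k^2=1$ against $D_k=1$; since $\iota$ carries each generating loop $\a_{ij}$ on the line to the like-named generator, the left vertical map is the natural surjection with kernel the normal closure $\langle\langle D_k\rangle\rangle$. A diagram chase then shows the middle map is onto and that its kernel is the normal closure of $D_k$ in $\pi_1(\mathcal{C}_{k+1}^{1,1})$. Invoking the identification $D_k=\Delta_k^2$ from the introduction, this appends precisely the relation $\Delta_k^2=1$ to the sphere presentation of part (1), giving the stated group.

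The main obstacle is this last step: controlling how the single extra pure-braid relation descends to a relation among the $\s_i$. Concretely I must verify that $\iota_\ast$ is surjective on $\pi_1$ and that its kernel is no larger than $\langle\langle\Delta_k^2\rangle\rangle$, i.e. that passing from the covering spaces to their $\Sigma_{k+1}$-quotients introduces no relation beyond $\Delta_k^2=1$; the compatibility of the two copies of $(\ast)$ and the equivariance of $\iota$ are exactly what make this work. Geometrically the same relation arises from the Fadell--Neuwirth-type fibration $\mathcal{C}_{k+1}(S^2)\to\mathcal{C}_{k+1}^{1,n}\to G_2(\mc^{n+1})$, whose base is simply connected with $\pi_2\cong\ZZ$ mapping onto $\langle\Delta_k^2\rangle$; this serves as a useful consistency check but is not needed for the argument.
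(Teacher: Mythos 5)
Your proposal is correct, and it reaches the presentation by a slightly different route than the paper even though the essential inputs (Theorem \ref{th:1}, the regular covering $\mathcal{F}_{k+1}^{1,n}\to\mathcal{C}_{k+1}^{1,n}$, and the identification $D_k=\Delta_k^2$ of Lemma \ref{l.6}) are the same. The paper passes to the quotient fibration $\mathcal{C}_{k+1}^{1,1}\to\mathcal{C}_{k+1}^{1,n}\to Gr^1(\mc P^n)$ and reads off $\pi_1(\mathcal{C}_{k+1}^{1,n})=\pi_1(\mathcal{C}_{k+1}^{1,1})\big/\big<\mathrm{Im}\,\delta'_*\big>$ from its homotopy exact sequence, identifying $\mathrm{Im}\,\delta'_*=\big<\Delta_k^2\big>$ by comparison with the boundary map of the ordered fibration, whose image $\big<D_k\big>$ was computed in the proof of Theorem \ref{th:1}; this is precisely the fibration you relegate to a ``consistency check.'' You instead run a five-lemma-type chase on the two covering short exact sequences linked by $\iota_*$, which spares you from verifying that the unordered projection to the Grassmannian is a fibration, at the price of needing that $\iota_*\colon\pi_1(\mathcal{F}_{k+1}^{1,1})\to\pi_1(\mathcal{F}_{k+1}^{1,n})$ is the surjection with kernel $\langle\langle D_k\rangle\rangle$. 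That fact comes from the \emph{proof} of Theorem \ref{th:1} (the generators $\a_{ij}$ of $\pi_1(\mathcal{F}_{k+1}^{1,n})$ are by construction the images of the like-named loops on a fixed line, and the kernel is $\mathrm{Im}\,\delta_*$), not merely from its statement, so cite that explicitly. One further point worth a sentence: your chase shows $\ker(\iota_*$ on $\pi_1(\mathcal{C}))$ equals $\ker(\iota_*$ on $\pi_1(\mathcal{F}))$, and since the former is normal in the whole group while the latter is the normal closure of $D_k$ only in the pure subgroup, the two normal closures of $\Delta_k^2$ (in $\pi_1(\mathcal{F}_{k+1}^{1,1})$ and in $\pi_1(\mathcal{C}_{k+1}^{1,1})$) coincide --- this is exactly what legitimizes appending the single relation $\Delta_k^2=1$ to the presentation of part (1). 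Both routes land on the same group.
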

The space $\mathcal{C}_{k}^{i,n}$ has a finite fundamental group only for $2\leq i\leq \min(k+1,n)$,  and in this case $\pi_1(\mathcal{C}_{k}^{i,n})\cong\Sigma_k$, and also $\pi_1(\mathcal{C}_{3}^{1,1})$ is the  dicyclic group of order 12.
\begin{corollary}\label{l.1.4} The homology groups $H_1(\mathcal{C}_{k+1}^{i,n})$ are isomorphic to $\mathbb{Z}_2$ with the following exceptions
\begin{enumerate}
  \item for $k\geq 2$, $H_1(\mathcal{C}_{k+1}^{1,1})=\mathbb{Z}_{2k}$ ;
  \item for $k\geq 3$ and $n\geq 2$, $H_1(\mathcal{C}_{k+1}^{1,n})= \left\{
                                         \begin{array}{ll}
                                          \mathbb{Z}_{k}\,\,\,\hbox{for}\,\,k=even, \\
                                           \mathbb{Z}_{2k}\,\,\,\hbox{for}\,\,k=odd\, .
                                         \end{array}
                                       \right.$
\end{enumerate}
\end{corollary}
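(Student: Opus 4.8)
The plan is to use the natural isomorphism $H_1(X)\cong\pi_1(X)^{\mathrm{ab}}$ and to abelianize the presentations furnished by Theorem~\ref{th:2}. In the generic situation $\pi_1(\mathcal{C}_{k+1}^{i,n})\cong\Sigma_{k+1}$, and since the commutator subgroup of $\Sigma_{k+1}$ is the alternating group $A_{k+1}$ for every $k+1\geq 2$, the sign homomorphism identifies $H_1(\mathcal{C}_{k+1}^{i,n})\cong\Sigma_{k+1}^{\mathrm{ab}}\cong\mathbb{Z}_2$. This settles all non-exceptional spaces at once, and leaves only the two families from Theorem~\ref{th:2} to analyze.

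For those I would first record the effect of the Artin relations $(A)_k$ under abelianization: the commuting relations become vacuous, while each relation $\sigma_i\sigma_{i+1}\sigma_i=\sigma_{i+1}\sigma_i\sigma_{i+1}$ collapses to $\sigma_i=\sigma_{i+1}$. Hence in $\pi_1^{\mathrm{ab}}$ all generators are identified with a single class $\sigma$, and $H_1$ is forced to be the cyclic group $\langle\sigma\rangle$, whose order is then governed purely by the exponent sums of the remaining relators. The whole computation therefore reduces to bookkeeping of word lengths.

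For case~(1), where $\mathcal{C}_{k+1}^{1,1}=\mathcal{C}_{k+1}(\mathbb{C}P^1)$, the single extra relator is the sphere word $\sigma_1\cdots\sigma_k^2\cdots\sigma_1$. Reading it as an ascending run $\sigma_1,\ldots,\sigma_{k-1}$, a squared peak $\sigma_k^2$, and a descending run $\sigma_{k-1},\ldots,\sigma_1$, its exponent sum in $\sigma$ is $(k-1)+2+(k-1)=2k$, giving $H_1(\mathcal{C}_{k+1}^{1,1})\cong\langle\sigma\mid\sigma^{2k}=1\rangle\cong\mathbb{Z}_{2k}$. For case~(2) there is in addition the relator $\Delta_k^2$; since $\Delta_k=\sigma_1(\sigma_2\sigma_1)(\sigma_3\sigma_2\sigma_1)\cdots(\sigma_{k-1}\cdots\sigma_1)$ has length $1+2+\cdots+(k-1)=\binom{k}{2}$, the relator $\Delta_k^2$ contributes $\sigma^{k(k-1)}=1$. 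Combining this with the sphere relation yields
$$H_1(\mathcal{C}_{k+1}^{1,n})\cong\big\langle\sigma\mid\sigma^{2k}=1,\ \sigma^{k(k-1)}=1\big\rangle\cong\mathbb{Z}_d,\qquad d=\gcd\big(2k,\,k(k-1)\big).$$

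The last step is arithmetic: $\gcd\big(2k,\,k(k-1)\big)=k\cdot\gcd(2,k-1)$, which equals $k$ when $k$ is even (so $k-1$ is odd) and $2k$ when $k$ is odd, reproducing the stated dichotomy. There is no serious obstacle here; the only point requiring care is the accurate counting of lengths in the two distinguished relators and the observation that in the abelianization only their exponent sums survive. The mild subtlety is entirely number-theoretic — the parity of $k$ entering through $\gcd(2,k-1)$ — and it is precisely this that splits part~(2) into its two cases.
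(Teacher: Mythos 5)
Your proposal is correct and follows essentially the same route as the paper: abelianize the presentations of Theorem \ref{th:2} (the Artin relations identify all generators, so only the exponent sums $2k$ of the sphere relator and $k(k-1)$ of $\Delta_k^2$ matter), and conclude with $\gcd(2k,k(k-1))=k\gcd(2,k-1)$, while the generic case gives $\Sigma_{k+1}^{\mathrm{ab}}\cong\mathbb{Z}_2$. The arithmetic and the length counts all check out against the paper's own computation.
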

V.L. Moulton studied in \cite{M2} a related problem: braids for $m$ points in general position in $\mc P^n$ (any subset of $n+1$ points spans $\mc P^n$) and the analogous affine problem. The only intersection of this paper with \cite{M2} is the simply connectedness of $\mathcal{F}_{n+1}^{n,n}$ ($Y_{n+1}^n$ in Moulton notations).

Section $2$ contains the geometrical part of the paper:
local triviality of some natural fibrations associated to
$\mathcal{F}_{k}^{i,n}$ and $\mathcal{C}_{k}^{i,n}$, and also in
this section we meet the pure braid $\Delta_{k}^2 $ ($\Delta_k$ is
the fundamental Garside \cite{G} braid in $\mathcal{B}_k$) playing the main
role in homotopical computations (see Lemma \ref{l.6}).

The proofs of Theorems \ref{th:1} and \ref{th:2} and their
Corollaries are given in Section $3$.

In the last section we compute the fundamental groups of $\mathcal{P}$, Pappus' configuration space.
\begin{theorem}\label{th:3}
The fundamental group of $\mathcal{P}$ is isomorphic to $\mathbb{F}_2\times\mathbb{F}_2$.
\end{theorem}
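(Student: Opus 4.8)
The plan is to realize $\mathcal{P}$ explicitly as a product of two punctured lines. Recall that a Pappus configuration is built from two distinct lines $\ell_a,\ell_b\subset\mc P^2$, a triple of points $A_1,A_2,A_3\in\ell_a$ and a triple $B_1,B_2,B_3\in\ell_b$ (all distinct from the intersection point $O=\ell_a\cap\ell_b$); the remaining three points are the cross-intersections $C_1=\overline{A_2B_3}\cap\overline{A_3B_2}$, $C_2=\overline{A_1B_3}\cap\overline{A_3B_1}$, $C_3=\overline{A_1B_2}\cap\overline{A_2B_1}$, which are collinear by the theorem of Pappus. Since $PGL_3(\mc)$ acts simply transitively on projective frames, and since $A_1,A_2,B_1,B_2$ are in general position (no three are collinear, as $O\notin\{A_i,B_j\}$), I would first use this action to normalize these four points to a fixed standard frame. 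This exhibits a global slice, so $\mathcal{P}$ is identified with the space of admissible positions of the last two points $(A_3,B_3)$.

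With the frame fixed, the lines $\ell_a=\overline{A_1A_2}$ and $\ell_b=\overline{B_1B_2}$ and the point $O$ are fixed, and the point $C_3$ is fixed as well. The parameter $A_3$ ranges over $\ell_a\cong\mc P^1$ with $A_3\neq A_1,A_2,O$, and $B_3$ over $\ell_b\cong\mc P^1$ with $B_3\neq B_1,B_2,O$; thus a priori $\mathcal{P}$ sits inside $(\mc P^1\setminus\{3\ \mathrm{pts}\})\times(\mc P^1\setminus\{3\ \mathrm{pts}\})$. The whole content is to prove that this inclusion is an equality, i.e.\ that no further positions of $A_3,B_3$ have to be deleted. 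Granting this, since $\pi_1(\mc P^1\setminus\{3\ \mathrm{pts}\})\cong\mathbb{F}_2$ and the fundamental group of a product is the product of the fundamental groups, we obtain $\pi_1(\mathcal{P})\cong\mathbb{F}_2\times\mathbb{F}_2$.

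The main obstacle is therefore to rule out diagonal degenerations coupling $A_3$ and $B_3$. I would place $\ell_a,\ell_b$ as the two coordinate axes with $O$ at the origin, write $A_3=(a,0)$ and $B_3=(0,b)$, and compute $C_1,C_2,C_3$ as explicit rational functions of $a,b$. One then checks that each potential coincidence among the nine points, and each coincidence among the nine lines, forces either $a\in\{0,1,2\}$ or $b\in\{0,1,2\}$ (the forbidden base values): for instance the equations $C_1=C_2$, $C_2=C_3$ and $C_i=A_j$ all factor through $(a-1)(a-2)=0$ or $(b-1)(b-2)=0$ after clearing denominators, so they cut out only the vertical and horizontal lines already removed. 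The one subtlety is that the affine formulas for $C_1,C_2$ blow up along the hyperbolas $ab=4$ and $ab=1$; but there the corresponding cross-point merely lies on the line at infinity and remains a genuine point of $\mc P^2$, so these loci are not degenerations and must not be removed. Hence the excluded locus is exactly the union of three ``vertical'' and three ``horizontal'' lines, $\mathcal{P}\cong(\mc P^1\setminus\{3\ \mathrm{pts}\})^{\times 2}$, and the theorem follows.
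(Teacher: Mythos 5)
Two separate problems, one of them fatal.

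First, you are working with the wrong space. The paper's $\mathcal{P}$ consists only of the six points $(A_1,B_1,C_1,A_2,B_2,C_2)$: two ordered collinear triples lying on two distinct lines $d_1,d_2$, all six points avoiding $I=d_1\cap d_2$. The three cross-intersection points of the classical Pappus theorem are not part of the data, so your entire third paragraph (coincidences among nine points and nine lines, the loci $ab=4$, $ab=1$) addresses conditions that are never imposed; once four of the six points are frozen, the remaining two range over $(\mc P^1\setminus\{3\ \hbox{pts}\})\times(\mc P^1\setminus\{3\ \hbox{pts}\})$ with nothing left to verify. Second, and decisively: normalizing $A_1,B_1,A_2,B_2$ to a standard frame does \emph{not} identify $\mathcal{P}$ with the slice. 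The frame map $\mathcal{P}\rightarrow\{\hbox{frames}\}\cong PGL_3(\mc)$ is equivariant onto a torsor, so it identifies $\mathcal{P}$ with the \emph{product} $PGL_3(\mc)\times F$, where $F\cong(\mc P^1\setminus\{3\ \hbox{pts}\})^2$ is the fiber over the standard frame. You may only discard the group factor if it is simply connected, and it is not: $PGL_3(\mc)$ retracts onto $SU(3)/Z$ with $Z\cong\mathbb{Z}_3$ the center and $SU(3)$ simply connected, so $\pi_1(PGL_3(\mc))\cong\mathbb{Z}_3$. Carried out correctly, your argument yields $\pi_1(\mathcal{P})\cong\mathbb{Z}_3\times\mathbb{F}_2\times\mathbb{F}_2$, not $\mathbb{F}_2\times\mathbb{F}_2$.

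This forces a comparison with the paper's proof, which proceeds quite differently: it fibers $\mathcal{P}$ over $\mc P^2$ by $I$, fibers $\mathcal{P}_I$ over the pencil pairs $\mathcal{F}_2(\mc P^1)$, obtains $\pi_1(\mathcal{P}_I)\cong\mathbb{F}_2\times\mathbb{F}_2\times\langle c\rangle$ with central $c=\b_{12}\b_{13}\b_{23}=\b'_{12}\b'_{13}\b'_{23}$, and then kills $\delta_*$ of the generator of $\pi_2(\mc P^2)$, computed there as $c\,(c')^{-2}=c^{-1}$. The torsor computation above is convention-free and contradicts this, and the discrepancy can be localized: the second triple of $\widetilde{\g}|_{S^1}$ is read in the paper in the affine coordinate $x_1/x_2$ on $d_2$, in which $I$ sits at the origin; since that loop winds around $I$, one must first pass to the coordinate $x_2/x_1$ identifying $d_2\setminus\{I\}$ with $\mc$ (sending $I$ to $\infty$), where the triple becomes $(z^2,z^2/2,z^2/3)$ and contributes $(c')^{+2}$, not $(c')^{-2}$. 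Then $\delta_*[\widetilde{\g}|_{S^1}]=c\,(c')^{2}=c^{3}$, and the quotient is $\mathbb{F}_2\times\mathbb{F}_2\times\mathbb{Z}_3$, in agreement with the torsor count. So your route, once the $PGL_3(\mc)$ factor is restored, is the more elementary and more reliable one --- but it establishes a statement different from the theorem as printed, and that conflict has to be resolved rather than papered over.
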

\noindent($\mathbb{F}_n$ is the free group with $n$ generators).

In the Appendix one can find some words representing $\Delta_n$ and
$D_n=\Delta_{n}^2$.
\section{Local triviality of geometric fibrations}
We begin with some simple remarks on the stratification with complex
submanifolds
$$\mathcal{F}_k(\mathbb{C}P^n)=\mathop{\coprod}\limits_{i=1}^{n} \mathcal{F}_{k}^{i,n}.$$

\begin{remark}
\begin{enumerate}
  \item ${\mathcal{F}_{k}^{i,n}}\neq \emptyset$ if and only if $i\leq \min(k+1,n)$;
\item  ${\mathcal{F}_{k}^{1,1}}=\mathcal{F}_k(\mc P^1)$,\,\,\,${\mathcal{F}_{2}^{1,n}}=\mathcal{F}_2(\mc P^n)$;
  \item the adjacency of the strata is given by $$\overline{\mathcal{F}_{k}^{i,n}}=\mathcal{F}_{k}^{1,n}\coprod\ldots\coprod\mathcal{F}_{k}^{i,n}.$$
\end{enumerate}
\end{remark}

Grasmannian manifolds are related to the spaces $\mathcal{F}_{k}^{i,n}$ through the following fibrations:
\begin{proposition}\label{pr:1}
The projection
$$\gamma:\mathcal{F}_{k}^{i,n}\rightarrow  Gr^i(\mc P^n)$$
given by
$$(x_1,\ldots,x_k)\mapsto i-th\,\,\hbox {dimensional space generated by \,\,}\{x_1,x_2,\ldots,x_k\}$$
is a locally trivial fibration with fiber $\mathcal{F}_{k}^{i,i}$.
\end{proposition}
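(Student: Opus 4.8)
The plan is to exhibit $\gamma$ as a $G$-equivariant map for a group $G$ acting transitively on the base $Gr^i(\mc P^n)$, and then to trivialize it over a chart by means of a local section. Let $G=PGL_{n+1}(\mc)$ act on $\mc P^n$ by projective transformations. The diagonal action on $(\mc P^n)^k$ restricts to $\mathcal{F}_k(\mc P^n)$, and since a projective transformation carries the subspace spanned by $\{x_1,\dots,x_k\}$ isomorphically onto the one spanned by the images, it preserves each stratum $\mathcal{F}_{k}^{i,n}$ and induces the usual transitive $G$-action on $Gr^i(\mc P^n)$; with respect to these actions $\gamma$ is equivariant. Fix the base point $V_0=[z_{i+1}=\dots=z_n=0]\in Gr^i(\mc P^n)$ together with the tautological identification $V_0\cong\mc P^i$. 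The fibre $\gamma^{-1}(V_0)$ is then exactly the set of ordered $k$-tuples of points of $V_0$ spanning $V_0$, which under $V_0\cong\mc P^i$ is $\mathcal{F}_{k}^{i,i}$.

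First I would construct a continuous local section of the orbit map $G\to Gr^i(\mc P^n)$, $g\mapsto g\cdot V_0$. Over the standard affine chart $U$ around $V_0$, an $i$-plane is the column span of a unique matrix with block form $\bigl(\begin{smallmatrix} I_{i+1}\\ A\end{smallmatrix}\bigr)$, where $A$ runs over $(n-i)\times(i+1)$ complex matrices, and
\[
s:U\longrightarrow G,\qquad A\longmapsto \begin{pmatrix} I_{i+1} & 0\\ A & I_{n-i}\end{pmatrix}
\]
is algebraic and satisfies $s(V)\cdot V_0=V$. Using $s$, define
\[
\Phi:U\times\mathcal{F}_{k}^{i,i}\longrightarrow \gamma^{-1}(U),\qquad \bigl(V,(y_1,\dots,y_k)\bigr)\longmapsto\bigl(s(V)\cdot y_1,\dots,s(V)\cdot y_k\bigr),
\]
where the tuple $(y_1,\dots,y_k)$ is read inside $V_0\cong\mc P^i$. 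Because $s(V)$ carries $V_0$ isomorphically onto $V$, the image tuple lies in $V$ and still spans it, so $\Phi$ takes values in $\gamma^{-1}(U)$ and $\gamma\circ\Phi=\mathrm{pr}_U$.

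It remains to check that $\Phi$ is a homeomorphism. It is continuous since the $G$-action and $s$ are continuous; its inverse sends $(x_1,\dots,x_k)\in\gamma^{-1}(V)$ to $\bigl(V,(s(V)^{-1}\cdot x_1,\dots,s(V)^{-1}\cdot x_k)\bigr)$, which is well defined because $s(V)^{-1}$ maps $V$ back onto $V_0$, and is continuous for the same reason; the two maps are mutually inverse since $s(V)^{-1}s(V)=1$. The analogous trivialization over the remaining standard charts of the Grassmannian (equivalently, over the $G$-translates of $U$) covers all of $Gr^i(\mc P^n)$, yielding local triviality with fibre $\mathcal{F}_{k}^{i,i}$. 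I expect the only genuine obstacle to be the existence of the continuous local section $s$: the explicit chart formula above settles it directly, and conceptually it reflects the fact that $Gr^i(\mc P^n)=G/P$ is a homogeneous space, so $G\to G/P$ is a locally trivial principal bundle on which local sections always exist. Everything else is formal.
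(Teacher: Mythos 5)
Your proof is correct and is essentially the paper's argument written in matrix coordinates: the paper trivializes $\gamma$ over the open set $\mathcal{U}_{L_0}$ of $i$-planes disjoint from a fixed complementary $L_0\in Gr^{n-i-1}(\mc P^n)$ via the continuously varying projective isomorphisms $\varphi_V(x)=(x\vee L_0)\cap V$, and for $V_0=[z_{i+1}=\dots=z_n=0]$ and $L_0=[z_0=\dots=z_i=0]$ your $s(V)\big|_{V_0}$ is exactly this $\varphi_V$, with your chart $U$ equal to $\mathcal{U}_{L_0}$. Routing the construction through a local section of $PGL_{n+1}(\mc)\rightarrow Gr^i(\mc P^n)$ is a harmless repackaging of the same idea, so the two proofs coincide in substance.
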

\noindent\textit{Proof}. Take $V_0\in Gr^i(\mc P^n)$ and choose $L_{0}\in Gr^{n-i-1}(\mc P^n)$ such that $L_0\cap V_0=\emptyset$  and
 let define $\mathcal{U}_{L_0}$, an open neighborhood of $V_0$, by
$$\mathcal{U}_{L_0}=\{V\in Gr^i(\mc P^n)|L_0\cap V=\emptyset\}.$$
Take $k$ distinct points  $(x_1,\ldots,x_k)$ in $V_0$ such that
$\hbox{span}(x_1,\ldots,x_k)=V_0$. For any arbitrary $i$-plane $V$ in $\mathcal{U}_{L_0}$, define the projective isomorphism
$$ \varphi_{_{V}}:V_0\rightarrow V,\,\,\varphi_{_{V}}(x)=(x\vee L_0)\cap V.$$
The local trivialization is given by the homeomorphism
 $$f:\gamma^{-1}(\mathcal{U}_{L_0})\rightarrow \mathcal{U}_{L_0}\times \mathcal{F}_k(V_0)$$
 $$y=(y_1,\ldots,y_k)\mapsto\big(\gamma(y_1,\ldots,y_k),(\varphi_{_{\gamma(y)}}^{-1}(y_1),\ldots,\varphi_{_{\gamma(y)}}^{-1}(y_k))\big)$$
(where $\mathcal{F}_k(V_0)\approx\mathcal{F}_{k}^{i,i}$), making the diagram commutative

\begin{center}
\begin{picture}(360,120)
\thicklines
\put(77,90){\vector(2,-1){45}}
\put(107,100){\vector(1,0){70}}
\put(197,90){\vector(-2,-1){45}}
\put(55,98){${\gamma}^{-1}(\mathcal{U}_{L_0})$}
\put(184,98){$\mathcal{U}_{L_0}\times \mathcal{F}_{k}^{i,i}$}
\put(130,60){$ \mathcal{U}_{L_0}$}
\put(83,70){$\gamma$}
\put(180,70){$pr_1$}
\put(134,105){$f$}
\put(350,60){$\square$}
\end{picture}
\end{center}
\vspace{-1.3cm}
\begin{corollary}
The complex dimensions of the strata are given by
\end{corollary}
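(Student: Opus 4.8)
The statement to be established is the formula for the complex dimension of each stratum, namely
$$\dim_{\mc}\mathcal{F}_{k}^{i,n}=(i+1)(n-i)+ki.$$
The plan is to read this off directly from the fibration of Proposition \ref{pr:1}. Since $\gamma:\mathcal{F}_{k}^{i,n}\rightarrow Gr^i(\mc P^n)$ is a locally trivial fibration with fiber $\mathcal{F}_{k}^{i,i}$, and all three spaces are complex manifolds, the dimension of the total space is the sum of the dimensions of base and fiber:
$$\dim_{\mc}\mathcal{F}_{k}^{i,n}=\dim_{\mc}Gr^i(\mc P^n)+\dim_{\mc}\mathcal{F}_{k}^{i,i}.$$
Thus it suffices to compute the two summands separately.

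First I would compute the base dimension. An $i$-dimensional projective subspace of $\mc P^n$ is the same datum as an $(i+1)$-dimensional linear subspace of $\mc^{n+1}$, so $Gr^i(\mc P^n)$ is the complex Grassmannian of $(i+1)$-planes in $\mc^{n+1}$, whose classical complex dimension is $(i+1)\big((n+1)-(i+1)\big)=(i+1)(n-i)$.

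Next I would compute the fiber dimension. By definition $\mathcal{F}_{k}^{i,i}$ consists of the $k$-tuples of pairwise distinct points of $\mc P^i$ whose span is all of $\mc P^i$. Both defining conditions, pairwise distinctness and spanning the full space, cut out the complement of a proper (determinantal) subvariety of $(\mc P^i)^k$, hence an open set; and this open set is non-empty exactly when $k\ge i+1$, the range in which the stratum exists. Therefore $\mathcal{F}_{k}^{i,i}$ is a non-empty open subset of $(\mc P^i)^k$, so $\dim_{\mc}\mathcal{F}_{k}^{i,i}=ki$. Substituting the two values into the displayed sum gives $(i+1)(n-i)+ki$.

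The argument is essentially bookkeeping once Proposition \ref{pr:1} is available; the only point warranting a small verification is that the spanning condition defining the fiber is open of full dimension rather than a proper subvariety, which is precisely the fact that $k$ generic points of $\mc P^i$ span it whenever $k\ge i+1$. As a sanity check on the formula, the extreme strata come out correctly: for $i=n=1$ it yields $\dim_{\mc}\mathcal{F}_k(\mc P^1)=k$, and for $i=n$, $k=n+1$ it yields $n(n+1)=\dim_{\mc}(\mc P^n)^{n+1}$, as expected since in the latter case $\mathcal{F}_{n+1}^{n,n}$ is open in $(\mc P^n)^{n+1}$.
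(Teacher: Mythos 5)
Your proof is correct and follows the same route as the paper: the paper likewise reads the dimension off the fibration of Proposition \ref{pr:1}, noting only that $\mathcal{F}_{k}^{i,i}$ is a Zariski open subset of $(\mc P^i)^k$ for $k\geq i+1$, hence of dimension $ki$, and adds the standard Grassmannian dimension $(i+1)(n-i)$. Your version just spells out these two computations in more detail.
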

\vspace{-0.7cm}
$$\hbox{dim}(\mathcal{F}_{k}^{i,n})=\hbox{dim}(\mathcal{F}_{k}^{i,i})+\hbox{dim}( Gr^{i}(\mc P^n))=ki+(i+1)(n-i).$$
\begin{proof}
$\mathcal{F}_{k}^{i,i}$ is a Zariski open subset in $(\mc P^i)^k$ for $k\geq i+1$.
\end{proof}
\begin{lemma}\label{l.7}
The projection
$$p:\mathcal{F}_{k+1}^{k,k}\longrightarrow \mathcal{F}_{k}^{k-1,k}
,\,\,\,\,(x_1,\ldots,x_{k+1})\mapsto(x_1,\ldots,x_{k})$$
is a locally trivial fibration with fiber
$\mc P^k\setminus \mc P^{k-1}\approx\mc^k$.
\end{lemma}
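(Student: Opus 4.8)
The plan is to identify the fibers explicitly and then build a local trivialization by dragging the spanned hyperplane onto a fixed reference hyperplane, very much in the spirit of the proof of Proposition \ref{pr:1}. First I would check that $p$ is well defined and determine the fibers. Given $(x_1,\ldots,x_k)\in\mathcal{F}_{k}^{k-1,k}$, the points span a hyperplane $H=\span(x_1,\ldots,x_k)\cong\mc P^{k-1}$. A point $x_{k+1}$ completes the configuration to an element of $\mathcal{F}_{k+1}^{k,k}$ exactly when $\span(x_1,\ldots,x_{k+1})=\mc P^k$, that is, precisely when $x_{k+1}\notin H$; distinctness of $x_{k+1}$ from $x_1,\ldots,x_k$ is then automatic, since the latter all lie in $H$. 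Hence $p$ is surjective and $p^{-1}(x_1,\ldots,x_k)=\mc P^k\setminus H\approx\mc^k$.

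Next I would package the situation as a pullback. Consider the span map $\gamma':\mathcal{F}_{k}^{k-1,k}\to Gr^{k-1}(\mc P^k)$, $(x_1,\ldots,x_k)\mapsto H$, and the tautological complement $E=\{(H,y)\mid y\notin H\}$ with its projection to $Gr^{k-1}(\mc P^k)$. By the fiber computation above, $\mathcal{F}_{k+1}^{k,k}$ is canonically the pullback $(\gamma')^{*}E$, the projection $p$ corresponding to the bundle projection of $E$. The map $\gamma'$ is continuous (indeed holomorphic): the hyperplane through $k$ points in general position is cut out by the vanishing of a $(k+1)\times(k+1)$ determinant whose coefficients are minors, polynomial in the homogeneous coordinates. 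Since the pullback of a locally trivial fibration is locally trivial, it suffices to trivialize $E$ locally.

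To trivialize $E$ near a base hyperplane $H_0$, I would choose a point $q_0\in\mc P^k\setminus H_0$; this is exactly the $i=k-1,\ n=k$ incarnation of the auxiliary $L_0\in Gr^{n-i-1}(\mc P^n)$ used in Proposition \ref{pr:1}, now reduced to a single point. Set $\mathcal{U}=\{H\in Gr^{k-1}(\mc P^k)\mid q_0\notin H\}$, an open neighborhood of $H_0$, and pick homogeneous coordinates with $H_0=\{z_0=0\}$ and $q_0=[1:0:\cdots:0]$. Every $H\in\mathcal{U}$ then has a unique normalized equation $z_0+a_1z_1+\cdots+a_kz_k=0$, and the linear automorphism $A_H(z_0,z_1,\ldots,z_k)=(z_0+a_1z_1+\cdots+a_kz_k,\,z_1,\ldots,z_k)$ has determinant $1$, depends holomorphically on $H$, reduces to the identity at $H_0$, and carries $H$ onto $H_0$. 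The induced projective transformation $\psi_H$ thus restricts to an isomorphism $\mc P^k\setminus H\to\mc P^k\setminus H_0\approx\mc^k$, and $(H,y)\mapsto(H,\psi_H(y))$ trivializes $E$ over $\mathcal{U}$.

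Pulling this back along $\gamma'$ gives, over the open set $(\gamma')^{-1}(\mathcal{U})$, the fiber-preserving homeomorphism $f(x_1,\ldots,x_{k+1})=\big((x_1,\ldots,x_k),\,\psi_{\gamma'(x_1,\ldots,x_k)}(x_{k+1})\big)$ onto $(\gamma')^{-1}(\mathcal{U})\times(\mc P^k\setminus H_0)$, with inverse obtained by applying $\psi_H^{-1}$; this exhibits $p$ as locally trivial with fiber $\mc P^k\setminus\mc P^{k-1}\approx\mc^k$. The main obstacle, and essentially the only nonformal point, is verifying that the hyperplane-moving family $\psi_H$ is globally well defined and continuous on the chart: the condition $q_0\notin H$ is exactly what allows the coefficient of $z_0$ to be normalized to $1$, making $A_H$ single-valued and holomorphic, and this together with the continuity of $\gamma'$ is what the whole argument rests on. Everything else is routine.
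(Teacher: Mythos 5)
Your proof is correct, but it is organized differently from the paper's. The paper trivializes $p$ directly over an open set $\mathcal{U}$ of the base $\mathcal{F}_{k}^{k-1,k}$: it fixes an auxiliary point $x_{k+2}^0$ completing the reference configuration to a projective frame, and uses the unique projective automorphism $T_{(x_1,\ldots,x_k)}$ of $\mc P^k$ sending $(x_1^0,\ldots,x_k^0,x_{k+1}^0,x_{k+2}^0)$ to $(x_1,\ldots,x_k,x_{k+1}^0,x_{k+2}^0)$; this automorphism depends on the whole ordered $k$-tuple, and continuity comes from the frame theorem cited from Berger. You instead factor the problem through the Grassmannian: you exhibit $\mathcal{F}_{k+1}^{k,k}$ as the pullback along the (continuous) span map $\gamma'$ of the tautological complement bundle $E=\{(H,y)\mid y\notin H\}$ over $Gr^{k-1}(\mc P^k)=(\mc P^k)^{*}$, and trivialize $E$ over the affine chart $\{H\mid q_0\notin H\}$ by the explicit linear map $A_H$ obtained from the normalized equation of $H$. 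Your automorphism $\psi_H$ depends only on the hyperplane $H$, not on the individual points, and is given by elementary linear algebra rather than an appeal to the existence and continuity of the frame transformation; the price is the extra (routine) step that pullbacks of locally trivial fibrations are locally trivial. Both arguments are sound; yours is closer in spirit to the proof of Proposition \ref{pr:1} (your $q_0$ is the $L_0\in Gr^{n-i-1}(\mc P^n)$ of that proof with $i=k-1$, $n=k$), while the paper's frame argument is the one that transfers verbatim to the generalization in the subsequent Remark, where the fiber is $\mc P^n\setminus\mc P^{k-1}$ and one completes to a frame of $\mc P^n$ with $n+2-k$ auxiliary points.
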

\begin{proof}
Take $(x_{1}^0,\ldots,x_{k+1}^0)\in \mathcal{F}_{k+1}^{k,k}$ and fix $x_{k+2}^0$ such that
any $k+1$ points of the set $\{x_{1}^0,\ldots,x_{k+1}^0,x_{k+2}^0\}$ are independent. Define the open neighborhood $\mathcal{U}$ of $(x_{1}^0,\ldots,x_{k}^0)$ by
$$\mathcal{U}=\{(x_1,\ldots,x_k)|\,\hbox{the set}\, \{x_1,\ldots,x_k,x_{k+1}^0,x_{k+2}^0\} \,\hbox{is in general position} \}.$$
There exists a unique projective isomorphism $T_{(x_1,\ldots,x_k)}:\mc P^k\longrightarrow\mc P^k,$
which depends continuously on $ (x_1,\ldots,x_k) $, such that
$$T_{(x_1,\ldots,x_k)}(x_{1}^0,\ldots,x_{k}^0,x_{k+1}^0,x_{k+2}^0)=(x_1,\ldots,x_k,x_{k+1}^0,x_{k+2}^0)$$
(see \cite{B1}). We can define the homeomorphisms $\varphi,\psi$ by :
 $$p^{-1}(\mathcal{U})\mathop{\longleftarrow}\limits_{\psi}^{\mathop{\longrightarrow}\limits^{\varphi}} \mathcal{U}\times \big(\mc P^k\setminus\hbox{span}\{x_1^0,\ldots,X_k^0\}\big)$$
 $$\varphi(x_1,\ldots,x_k,x)=((x_1,\ldots,x_k),T_{(x_1,\ldots,x_k)}^{-1}(x))$$
 $$\psi((x_1,\ldots,x_k),y)=(x_1,\ldots,x_k,T_{(x_1,\ldots,x_k)}(y))$$
 satisfying $pr_1\circ\varphi=p$ .
\end{proof}
\begin{remark}
More generally, the projection
$$p:\mathcal{F}_{k+1}^{k,n}\longrightarrow \mathcal{F}_{k}^{k-1,n},\,(x_1,\ldots,x_{k+1})\mapsto(x_1,\ldots,x_{k})$$
is a locally trivial fibration with simply connected fiber
$\mc P^n\setminus \mc P^{k-1}$.
\end{remark}
\begin{proof}
The proof is similar to the previous one: take $(x_{1}^0,\ldots,x_{k+1}^0)\in \mathcal{F}_{k}^{k,n}$ and fix $x_{k+2}^0,..,x_{n+2}^0$ such that
any $n+1$ points of the set $\{x_{1}^0,..,x_{k+1}^0,..,x_{n+2}^0\}$ are independent. Define the open neighborhood $\mathcal{U}$ of $(x_{1}^0,\ldots,x_{k}^0)$ by
$$\mathcal{U}=\{(x_1,\ldots,x_k)|\,\hbox{the set}\, \{x_1,..,x_k,x_{k+1}^0,..,x_{n+2}^0\} \,\hbox{is in general position} \}$$
and construct the trivialization as in Lemma \ref{l.7}. The fiber
$\mc P^n\setminus \mc P^{k-1}$ is simply connected because the real
codimension of $\mc P^{k-1}$ is $\geq 4$ (for $k\leq n-1$) and it is contractible for $k=n$.
\end{proof}
Let $\mathcal{A}=(A_1,\ldots,A_p)$ be a sequence of subsets of
$\{1,\ldots,k\}$ and the integers $d_1,\ldots,d_p$ given by
$d_j=|A_j|-1,\,\,\,j=1,\ldots,p$. Let us define
$$\mathcal{F}_{k}^{\mathcal{A},n}=\{(x_1,\ldots,x_k)\in\mathcal{F}_{k}(\mc P^n)\big|\dim<x_i>_{i\in A_j}=d_j \}.$$
 \begin{example}\label{ex:2.6}
 \begin{enumerate}
    \item If $\mathcal{A}=\{A_1\},\,\,A_1=\{1,\ldots,k\}$, then $\mathcal{F}_{k}^{\mathcal{A},n}=\mathcal{F}_{k}^{k-1,n}.$
    \item if all $A_i$ have cardinality $|A_i|\leq 2$, then $\mathcal{F}_{k}^{\mathcal{A},n}=\mathcal{F}_k(\mc P^n).$
   \item if $p\geq2$ and $|A_p|\leq 2$, then $\mathcal{F}_k^{(A_1,\ldots,A_p),n}=\mathcal{F}_k^{(A_1,\ldots,A_{p-1}),n}$.
   \item if $p\geq2$ and $A_p\subseteq A_1$, then $\mathcal{F}_k^{(A_1,\ldots,A_p),n}=\mathcal{F}_k^{(A_1,\ldots,A_{p-1}),n}$.
    \item $\mathcal{F}_{k}^{i,n}=\bigcup\big\{\mathcal{F}_{k}^{\mathcal{A},n}\big|\mathcal{A}=\{A\},A\in
{\{1,\ldots,k\} \choose{i+1}}\big\}.$
\item $Y_{k}^{n}=\bigcap\big\{\mathcal{F}_{k}^{\mathcal{A},n}\big|\mathcal{A}=\{A\},A\in
{\{1,\ldots,k\} \choose{n+1}}\big\}$ is the space analyzed by Moulton in \cite{M2}.
 \end{enumerate}
\end{example}
\begin{lemma}\label{l.6.5}
For $A=\{1,\ldots,i+1\}$ and $k>i+1$ the map
$$P_A:\mathcal{F}_{k}^{\{A\},i}\rightarrow\mathcal{F}_{i+1}^{i,i},\,\,\,(x_1,\dots,x_k)\mapsto(x_1,\ldots,x_{i+1})$$
is a locally trivial fibration with fiber $\mathcal{F}_{k-i-1}(\mc
P^i\setminus\{x_1,\ldots,x_{i+1}\}).$
\end{lemma}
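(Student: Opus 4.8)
The plan is to reuse the ``moving frame'' construction of Lemma~\ref{l.7}, the only genuinely new point being the identification of the fiber. First I would check that the fiber of $P_A$ over a point $(x_1,\ldots,x_{i+1})\in\mathcal{F}_{i+1}^{i,i}$ is exactly $\mathcal{F}_{k-i-1}(\mc P^i\setminus\{x_1,\ldots,x_{i+1}\})$. Indeed, the defining condition of $\mathcal{F}_{k}^{\{A\},i}$, namely $\dim\langle x_1,\ldots,x_{i+1}\rangle=i$, constrains only the first $i+1$ coordinates; once these are fixed the remaining coordinates $(x_{i+2},\ldots,x_k)$ are subject to no condition beyond being $k-i-1$ pairwise distinct points of $\mc P^i$, each distinct from $x_1,\ldots,x_{i+1}$. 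This is precisely a point of $\mathcal{F}_{k-i-1}(\mc P^i\setminus\{x_1,\ldots,x_{i+1}\})$, so the set-theoretic fiber is as claimed.

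To trivialize $P_A$ near a base point $(x_1^0,\ldots,x_{i+1}^0)$, I would imitate Lemma~\ref{l.7}. Since $x_1^0,\ldots,x_{i+1}^0$ span $\mc P^i$ they are projectively independent but do not yet constitute a projective frame, so I would fix one auxiliary point $x_{i+2}^0$ making $\{x_1^0,\ldots,x_{i+1}^0,x_{i+2}^0\}$ a frame (every $i+1$ of these $i+2$ points independent), and set
$$\mathcal{U}=\{(x_1,\ldots,x_{i+1})\in\mathcal{F}_{i+1}^{i,i}\mid \{x_1,\ldots,x_{i+1},x_{i+2}^0\}\ \hbox{is in general position}\},$$
an open neighborhood of $(x_1^0,\ldots,x_{i+1}^0)$. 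By the existence and uniqueness of projective maps determined on a frame (see \cite{B1}), for each $(x_1,\ldots,x_{i+1})\in\mathcal{U}$ there is a unique projective isomorphism $T_{(x_1,\ldots,x_{i+1})}:\mc P^i\to\mc P^i$, depending continuously on the base point, with $T_{(x_1,\ldots,x_{i+1})}(x_j^0)=x_j$ for $j\leq i+1$ and $T_{(x_1,\ldots,x_{i+1})}(x_{i+2}^0)=x_{i+2}^0$.

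Then I would define the trivialization and its inverse onto $\mathcal{U}\times\mathcal{F}_{k-i-1}(\mc P^i\setminus\{x_1^0,\ldots,x_{i+1}^0\})$ by
$$\varphi(x_1,\ldots,x_k)=\big((x_1,\ldots,x_{i+1}),\,T_{(x_1,\ldots,x_{i+1})}^{-1}(x_{i+2}),\ldots,T_{(x_1,\ldots,x_{i+1})}^{-1}(x_k)\big),$$
$$\psi\big((x_1,\ldots,x_{i+1}),(y_{i+2},\ldots,y_k)\big)=\big(x_1,\ldots,x_{i+1},T_{(x_1,\ldots,x_{i+1})}(y_{i+2}),\ldots,T_{(x_1,\ldots,x_{i+1})}(y_k)\big).$$
Because $T_{(x_1,\ldots,x_{i+1})}^{-1}$ carries $\{x_1,\ldots,x_{i+1}\}$ bijectively onto $\{x_1^0,\ldots,x_{i+1}^0\}$, it maps the varying fiber homeomorphically onto the fixed model fiber; hence $\varphi$ and $\psi$ are well defined, continuous, mutually inverse, and satisfy $pr_1\circ\varphi=P_A$, the continuity of $(x_1,\ldots,x_{i+1})\mapsto T_{(x_1,\ldots,x_{i+1})}$ making them homeomorphisms. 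The main (and essentially only) obstacle is the bookkeeping around the auxiliary point $x_{i+2}^0$: one must verify that it can always be chosen to complete a frame and that $\mathcal{U}$ is a genuine open neighborhood inside $\mathcal{F}_{i+1}^{i,i}$; once this is in place everything is formally identical to Lemma~\ref{l.7}.
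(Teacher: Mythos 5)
Your proposal is correct and follows essentially the same route as the paper: the paper also fixes an auxiliary point $z_{i+2}$ completing the base configuration to a projective frame, takes $\mathcal{U}$ to be the set of $(y_1,\ldots,y_{i+1})$ for which $(y_1,\ldots,y_{i+1},z_{i+2})$ is in general position, and uses the unique projective isomorphism $F_y$ sending the reference frame to the varying one to define the trivialization $(y_1,\ldots,y_k)\mapsto\big((y_1,\ldots,y_{i+1}),F_y^{-1}(y_{i+2},\ldots,y_k)\big)$. Your additional verification of the set-theoretic fiber and of the mutual inverseness of $\varphi$ and $\psi$ only makes explicit what the paper leaves implicit.
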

\begin{proof}
Choose $z_{i+2}\in \mc P^i$ in general position with the set $ (x_1,\ldots,x_{i+1}) $ and define the neighborhood
$\mathcal{U}$ of $(x_1,\ldots,x_{i+1})\in \mathcal{F}_{i+1}^{i,i}$ by $\mathcal{U}=\{(y_1,\ldots,y_{i+1})\in \mathcal{F}_{i+1}^{i,i}| \,(y_1,\ldots,y_{i+1},z_{i+2})\hbox{ are
in general position}\}$. There exists a unique projective isomorphism $F_y:\mc P^i\rightarrow \mc P^i$,
which depends continuously on
$y=(y_{1},\ldots,y_{i+1})$, such that
$$F_y(x_1,\ldots,x_{i+1},z_{i+2})=(y_1,\ldots,y_{i+1},z_{i+2})$$
and this gives a local trivialization
 $$f:P_{A}^{-1}(\mathcal{U})\rightarrow \mathcal{U}\times \mathcal{F}_{k-i-1}(\mc
P^i\setminus\{x_1,\ldots,x_{i+1}\})$$
$$(y_1,\ldots,y_k)\mapsto\big((y_1,\ldots,y_{i+1}),F_{y}^{-1}(y_{i+2},\ldots,y_k)\big)$$
%
which satisfies $pr_1\circ f=P_A$ .
\end{proof}
\begin{remark}
If $k=i+1$, then $P_A$ is the identity map.
\end{remark}
 Given $\mathcal{A}=(A_1,\ldots,A_p),\,d=(d_1,\ldots,d_p)$ as before and an index $h\in\{1,\ldots,k\}$, we define $\mathcal{A}'=(A'_1,\ldots,A'_p),\,d=(d'_1,\ldots,d'_p)$ by:
$$A'_j=\left\{
   \begin{array}{ll}
      A_j,\,\,\hbox{if}\,\,h\notin A_j\\
    A_j\setminus\{h\},\,\,\hbox{if}\,\,h\in A_j
   \end{array}
 \right.
,\,\,\,\,d'_j=\left\{
   \begin{array}{ll}
       d_j,\,\,\hbox{if}\,\,h\notin A_j\\
      d_j-1,\,\,\hbox{if}\,\,h\in A_j
   \end{array}
 \right..$$

\begin{lemma}\label{l.2.9}
The map
$$p_h:\mathcal{F}_{k}^{\mathcal{A},n}\rightarrow\mathcal{F}_{k-1}^{\mathcal{A}',n},\,\,
(x_1,\ldots,x_k)\mapsto(x_1,\ldots,\widehat{x_h},\ldots,x_k)$$
has local sections with path-connected fibers.
\end{lemma}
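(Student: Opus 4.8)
The plan is to analyze $p_h$ one fiber at a time and then produce sections by a transversality-type open-condition argument, exactly in the spirit of Lemmas~\ref{l.7} and \ref{l.6.5} but without insisting on a global product structure (which is presumably why only local sections, and not local triviality, are claimed, since the removed locus moves with the base).

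First I would identify the fibers explicitly. Fix a base point $\bar x=(x_1,\ldots,\widehat{x_h},\ldots,x_k)\in\mathcal{F}_{k-1}^{\mathcal{A}',n}$ and ask which $x_h\in\mc P^n$ complete it to a configuration in $\mathcal{F}_{k}^{\mathcal{A},n}$. For an index $j$ with $h\notin A_j$ the span $\langle x_i\rangle_{i\in A_j}$ does not involve $x_h$, so the constraint $\dim\langle x_i\rangle_{i\in A_j}=d_j$ already holds on the base. For an index $j$ with $h\in A_j$ we have $A_j'=A_j\setminus\{h\}$, and on $\mathcal{F}_{k-1}^{\mathcal{A}',n}$ the subspace $V_j:=\hbox{span}\{x_i\}_{i\in A_j'}$ has dimension $d_j'=d_j-1$; the required equality $\dim\langle x_i\rangle_{i\in A_j}=d_j$ then holds precisely when $x_h\notin V_j$. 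Combining this with the distinctness condition $x_h\neq x_i$ gives
$$p_h^{-1}(\bar x)=\mc P^n\setminus\Big(\{x_i\mid i\neq h\}\cup\bigcup_{j:\,h\in A_j}V_j\Big).$$
Since $\dim V_j=d_j-1\le n-1$ (as $d_j\le n$ on a nonempty stratum), each $V_j$ is a proper projective subspace.

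Next, for the local sections I would exploit that every condition cutting out the fiber is open. Choose any point $z$ in the (nonempty) fiber $p_h^{-1}(\bar x_0)$ over a given base point $\bar x_0$; that is, $z\neq x_i^0$ for all $i\neq h$ and $z\notin V_j(\bar x_0)$ for each $j$ with $h\in A_j$. Because the points $x_i$ and the spans $V_j$ depend continuously on the base configuration, the conditions $z\neq x_i$ and $z\notin V_j$ persist on a neighborhood $\mathcal{U}$ of $\bar x_0$. Hence the constant insertion $s:\bar x\mapsto(x_1,\ldots,z,\ldots,x_k)$ (with $z$ in slot $h$) is a continuous map $\mathcal{U}\to\mathcal{F}_{k}^{\mathcal{A},n}$ with $p_h\circ s=\mathrm{id}$, i.e.\ a local section; this also shows $p_h$ is surjective.

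Finally, path-connectedness of the fibers follows from a codimension count. The removed set $\{x_i\mid i\neq h\}\cup\bigcup_j V_j$ is a finite union of points (real codimension $2n\ge 2$) and proper projective subspaces (complex codimension $\ge 1$, hence real codimension $\ge 2$), so it is a closed subvariety of $\mc P^n$ of real codimension $\ge 2$. The complement of such a set in the connected manifold $\mc P^n$ is path-connected, by a general-position argument: any two points can be joined by a path, and a path meeting a codimension-$\ge 2$ subset can be perturbed off it. I expect the only genuine bookkeeping---hence the main obstacle---to be the case analysis identifying the fiber (keeping straight which $V_j$ are removed and verifying $\dim V_j\le n-1$); once the fiber is pinned down as the complement of a codimension-$\ge 2$ set, both the section and the connectivity claims are immediate.
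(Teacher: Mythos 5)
Your proposal is correct and follows essentially the same route as the paper: identify the fiber as $\mc P^n$ minus the points $x_i$ and the proper subspaces $V_j=\hbox{span}\{x_i\}_{i\in A'_j}$ (the paper's $L'_j$), note that $\dim V_j=d_j-1<n$ so the fiber is a path-connected complement of a codimension-$\ge 2$ set, and take a constant insertion as a local section on the open set where the avoidance conditions persist. The paper merely packages the openness of that set via an auxiliary continuous map $g$ into a product of Grassmannians, which is the same transversality/open-condition argument you describe.
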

\begin{proof}
Let us suppose that $h=k$ and $k\in(A_1\cap\ldots\cap A_l)\setminus( A_{l+1}\cup\ldots \cup A_p)$. Then the fiber of the map
$pr_k:\mathcal{F}_{k}^{\mathcal{A},n}\rightarrow\mathcal{F}_{k-1}^{\mathcal{A}',n}$
is $$pr_{k}^{-1}(x_1,\ldots,x_{k-1})\approx \mc P^n\setminus
\big(L'_{1}\cup\ldots\cup L'_{l}\cup\{x_1,\ldots,x_{k-1}\}\big)$$
where $L'_{j}=\hbox{span}(x_i)_{i\in A'_j}$. Even in the case when $\dim
L_j=n$, we have $\dim L'_j<n$, hence the fiber is path-connected and
nonempty. Fix a base point $x=(x_1,\ldots,x_{k-1})\in \mathcal{F}_{k-1}^{\mathcal{A}',n}$ and choose $x_k\in\mc P^n\setminus(L'_1\cup\ldots \cup L'_p\cup\{x_1,\ldots,x_{k-1}\})$. There are neighborhoods $W_j$ of $L'_j\,(j=1,\ldots,l)$  such that
$x_k\notin L''_j$ if $L''_j\in W_j$; we take a constant local
section
$$s:W=g^{-1}\big((\mc P^n\setminus\{x_k\})^{k-1}\times\mathop{\prod}\limits_{i=1}^{l}W_i\big)\rightarrow \mathcal{F}_{k}^{\mathcal{A},n}
$$
$$(y_1,\ldots,y_{k-1})\mapsto(y_1,\ldots,y_{k-1},x_k),$$
where the continuous map $g$ is given by:
$$g:\mathcal{F}_{k-1}^{\mathcal{A}',n}\rightarrow(\mc P^n)^{k-1}\times Gr^{d'_1}(\mc P^{n})\times\ldots\times Gr^{d'_l}(\mc P^{n})$$
$$(y_1,\ldots,y_{k-1})\mapsto(y_1,\ldots,y_{k-1},L''_{1},\ldots,L''_p),$$
and $L''_j=\hbox{span}\{y_i\}_{i\in A'_{j}}$ for $j=1,\ldots,l$.
\end{proof}
\begin{lemma}\label{l.2.10}

Let $p:B\rightarrow C$ be a continuous map with local sections such that $C$ is path-connected and
$p^{-1}(y)$ are path-connected for all $y\in C$. Therefore $B$ is path-connected.
\end{lemma}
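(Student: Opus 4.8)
The plan is to connect two arbitrary points $b_0,b_1\in B$ by a path in $B$. Since $C$ is path-connected (hence nonempty) and $p$ admits local sections, $B$ is nonempty, so this suffices. Set $c_0=p(b_0)$ and $c_1=p(b_1)$, and use path-connectedness of $C$ to choose a path $\gamma:[0,1]\to C$ with $\gamma(0)=c_0$ and $\gamma(1)=c_1$.

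The strategy is to lift $\gamma$ to $B$ \emph{piecewise} via the local sections, then repair the mismatches using path-connectedness of the fibers. Concretely, cover $C$ by open sets on which local sections of $p$ exist; pulling these back along $\gamma$ gives an open cover of the compact interval $[0,1]$. By the Lebesgue number lemma, choose a subdivision $0=t_0<t_1<\cdots<t_m=1$ so that each $\gamma([t_{j-1},t_j])$ is contained in an open set $U_j$ carrying a local section $s_j:U_j\to B$. Then $\tilde\gamma_j:=s_j\circ\gamma|_{[t_{j-1},t_j]}$ is a continuous path in $B$ lying over $\gamma|_{[t_{j-1},t_j]}$.

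These partial lifts generally fail to match up. The terminal point $\tilde\gamma_j(t_j)$ and the initial point $\tilde\gamma_{j+1}(t_j)$ both lie in the single fiber $p^{-1}(\gamma(t_j))$, but are typically distinct; since this fiber is path-connected, I join them by a path inside it. Likewise $b_0$ and $\tilde\gamma_1(t_0)$ both lie in $p^{-1}(c_0)$, and $\tilde\gamma_m(t_m)$ and $b_1$ both lie in $p^{-1}(c_1)$, so I connect each such pair within its (path-connected) fiber. Concatenating the partial lifts $\tilde\gamma_1,\ldots,\tilde\gamma_m$ with these correcting paths yields a continuous path in $B$ from $b_0$ to $b_1$, proving $B$ path-connected.

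The only delicate point is the repair step at the subdivision nodes: lifts produced by different local sections disagree on the shared fiber, and it is precisely the hypothesis that every fiber is path-connected that permits splicing them into one continuous path. The remaining ingredients — the Lebesgue number, continuity of each $\tilde\gamma_j$, and the fact that the correcting paths stay in fibers and therefore compose correctly with the lifts — are routine and require no further hypotheses on $p$.
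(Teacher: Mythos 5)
Your proof is correct and follows essentially the same route as the paper's: cover a path in $C$ by finitely many open sets carrying local sections, lift piecewise, and repair the mismatches at the junctions (and at the two endpoints) inside the path-connected fibers. The only difference is cosmetic — you make the compactness step explicit via the Lebesgue number lemma, where the paper simply says the path can be covered by finitely many such open sets.
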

\begin{proof}
For any $b\in B$ there exists a neighborhood $W$ of $p(b)$ and a
section $s:W\rightarrow B$ such that $s(p(b))=b_0$. We can join $b$ with any point $b'$ in $V=p^{-1}(W)$ by a continuous path: $b$ to $b_0$ and $b'$ to
$b'_0=s(p(b'))$ in fibers and $b_0$ to $b'_0$ using a path in $ C $ and the section $s$.
For any $b_1,b_2\in B$, a path in $C$ from $p(b_1)$ to $p(b_2)$ can
be covered by finite open sets of the type described before and next we apply the previous construction.
\end{proof}
\begin{proposition}\label{l.5}
The space $\mathcal{F}_{k}^{\mathcal{A},n}$ is path-connected.
\end{proposition}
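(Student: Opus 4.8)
The plan is to prove that $\mathcal{F}_{k}^{\mathcal{A},n}$ is path-connected by induction on the number of points $k$, using the projection maps $p_h$ from Lemma \ref{l.2.9} together with the path-connectivity criterion of Lemma \ref{l.2.10}. The base case is where $k$ is small enough that the constraint sequence $\mathcal{A}$ becomes trivial: if every $A_j$ has cardinality $\leq 2$, then by Example \ref{ex:2.6}(2) we have $\mathcal{F}_{k}^{\mathcal{A},n}=\mathcal{F}_k(\mc P^n)$, and the ordinary configuration space of a connected manifold of (real) dimension $\geq 2$ is path-connected by a standard argument. This gives a firm starting point for the induction.

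For the inductive step, I would fix an index $h$ appearing in some $A_j$ with $|A_j|\geq 3$ (so that deleting the $h$-th point genuinely simplifies the configuration) and consider the projection
$$p_h:\mathcal{F}_{k}^{\mathcal{A},n}\rightarrow\mathcal{F}_{k-1}^{\mathcal{A}',n}.$$
By Lemma \ref{l.2.9} this map has local sections and path-connected fibers. The inductive hypothesis, applied to the reduced data $(\mathcal{A}',n)$ on $k-1$ points, gives that the base $\mathcal{F}_{k-1}^{\mathcal{A}',n}$ is path-connected. All three hypotheses of Lemma \ref{l.2.10} are then in place, so that lemma immediately yields that the total space $\mathcal{F}_{k}^{\mathcal{A},n}$ is path-connected, completing the step. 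One must also handle the reductions in Example \ref{ex:2.6}(3),(4): if some $A_p$ with $p\geq 2$ satisfies $|A_p|\leq 2$ or $A_p\subseteq A_1$, the space coincides with one defined by a shorter sequence, so we may assume $\mathcal{A}$ is already reduced.

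The main obstacle, and the point requiring care, is ensuring that the induction is well-founded and that at each stage the reduced datum $\mathcal{A}'$ still describes a \emph{nonempty} space of the same type, so that Lemma \ref{l.2.9} genuinely applies. The delicate observation, already recorded in the proof of Lemma \ref{l.2.9}, is that deleting a point from a spanning set can only lower a dimension constraint $d_j$ to $d_j-1$ (never below what is achievable), so the fibers stay nonempty and path-connected even when some $L_j$ spans all of $\mc P^n$. I would therefore organize the induction on the quantity $\sum_j (|A_j|-2)^{+}$ (or equivalently on $k$ together with the total excess cardinality of the constraining sets), which strictly decreases under $p_h$ whenever we delete an index lying in a set of cardinality $\geq 3$, and reaches zero exactly in the base case where $\mathcal{F}_{k}^{\mathcal{A},n}=\mathcal{F}_k(\mc P^n)$. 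Verifying that $\mathcal{F}_{k}^{\mathcal{A},n}$ is nonempty to begin with (a genericity/dimension count in $\mc P^n$) is routine but should be stated, since Lemma \ref{l.2.10} presupposes a path-connected, hence nonempty, base.
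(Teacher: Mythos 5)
Your proof is correct and follows essentially the same route as the paper: an induction that peels off one point at a time using the local sections and path-connected fibers of Lemma \ref{l.2.9} together with the connectivity criterion of Lemma \ref{l.2.10}, with the reductions of Example \ref{ex:2.6}(3),(4) disposing of trivial constraints. The only (harmless) difference is the base case: you run the induction all the way down to $\mathcal{F}_k(\mc P^n)$, whereas the paper stops at $p=1$ and invokes the fibration of Lemma \ref{l.6.5} over the path-connected space $\mathcal{F}_{i+1}^{i,i}$.
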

\begin{proof}
  Use the previous Lemma and induction on $p$ and $d_1+d_2+\ldots+d_p$. If $p=1$, use Lemma \ref{l.6.5} and the space $\mathcal{F}_{i+1}^{i,i}$ which is path-connected. If $A_p$ is not included in $A_1$ and $d_p\geq3$, delete a point in $A_p\setminus A_1$ and use Lemma \ref{l.2.9} and \ref{l.2.10}. If $A_p\subset A_1$ or $d_p\leq2$, use Example \ref{ex:2.6}, $(3)$ and $(4)$.
  \end{proof}

\begin{lemma}\label{l.6}
The homotopy class of the map
$$\g:S^1\longrightarrow \mathcal{F}_{k+1}(\mc P^1),\,\,\g(z)=([z:1],[2z:1],\ldots,[kz:1],[1:0])$$
corresponds to the following pure braid in $\pi_1(\mathcal{F}_{k+1}(\mc P^1))$:
$$[\g]=\a_{12}(\a_{13}\a_{23})\ldots(\a_{1k}\a_{2k}\ldots\a_{k-1,k})=D_{k}\,\,.$$
\end{lemma}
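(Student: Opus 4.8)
The plan is to compute $[\g]$ inside the pure braid group $\pi_1(\mathcal{F}_k(\mc))=\mathcal{PB}_k$ and then push it forward to $\pi_1(\mathcal{F}_{k+1}(\mc P^1))$. Since the last point $[1:0]$ is held fixed at infinity, $\g$ lands in the fiber $\mathcal{F}_k(\mc P^1\setminus\{[1:0]\})=\mathcal{F}_k(\mc)$ of the map forgetting the last coordinate; writing $[jz:1]=jz\in\mc$, the loop becomes the rigid counterclockwise rotation $\g_k(\theta)=(e^{i\theta},2e^{i\theta},\ldots,ke^{i\theta})$, $\theta\in[0,2\pi]$, based at $b_k=(1,2,\ldots,k)$. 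Conceptually this is the rotation of the whole plane by $2\pi$, i.e.\ the full twist $\Delta_k^2=D_k$, so the target value is the expected one; I would prove $[\g_k]=D_k$ rigorously by induction on $k$, which also yields the precise word. The image of $D_k$ under $\mathcal{PB}_k\to\pi_1(\mathcal{F}_{k+1}(\mc P^1))$ is again $D_k$, so this suffices.

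For the inductive step I would use the Fadell--Neuwirth fibration $q\colon\mathcal{F}_k(\mc)\to\mathcal{F}_{k-1}(\mc)$ forgetting the outermost point $x_k$, whose fiber is $\mc\setminus\{x_1,\ldots,x_{k-1}\}$ with free fiber group $\langle\alpha_{1k},\ldots,\alpha_{k-1,k}\rangle$. The sequence splits via the section $s$ adjoining a point at the fixed location $k$ (legitimate along $\g_{k-1}$, where all points stay in the disk of radius $k-1$), and this section realizes the standard inclusion $\mathcal{PB}_{k-1}\hookrightarrow\mathcal{PB}_k$, $\alpha_{ij}\mapsto\alpha_{ij}$. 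Since $q\circ\g_k=\g_{k-1}$, the induction hypothesis gives $q_*[\g_k]=D_{k-1}$.

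It remains to identify the fiber component $f=[\g_k]\,s_*(D_{k-1})^{-1}=[\g_k*\overline{\mu}]$, where $\mu=s_*\g_{k-1}$ rotates the inner points while keeping $x_k$ at $k$. I would compute it in the trivialization of $q^{-1}$ over the loop $\g_{k-1}$ given by the rotations $\rho_\theta(w)=e^{-i\theta}w$, which carry the moving fiber $\mc\setminus\{je^{i\theta}\}_{j\le k-1}$ onto the fixed fiber $\mc\setminus\{1,\ldots,k-1\}$ and satisfy $\rho_0=\rho_{2\pi}=\mathrm{id}$. In these coordinates the last coordinate of $\g_k$ becomes the constant $\rho_\theta(ke^{i\theta})=k$, while that of $\mu$ becomes $\rho_\theta(k)=ke^{-i\theta}$; hence the $S^1$-component of $f$ is null and its fiber component is the reverse of $\theta\mapsto ke^{-i\theta}$, that is, one counterclockwise loop of radius $k$ around the collinear punctures $1,\ldots,k-1$. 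This loop decomposes as $A_k:=\alpha_{1k}\alpha_{2k}\cdots\alpha_{k-1,k}$, so $f=A_k$ and, up to the path-composition convention, $[\g_k]=A_k\,D_{k-1}$.

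Finally I would assemble the answer from $D_k=D_{k-1}A_k$ (the definition, splitting off the last block) together with the commutation $A_kD_{k-1}=D_{k-1}A_k$: geometrically $D_{k-1}=\Delta_{k-1}^2$ is a full twist supported in a small disk around $\{1,\ldots,k-1\}$, while $A_k$ is a loop of $x_k$ around the outside of that disk, so the two have disjoint supports and commute (equivalently this is the identity $\alpha_{12}A_3=A_3\alpha_{12}$ and its higher analogues supplied by $(YB3)$). Hence $[\g_k]=A_kD_{k-1}=D_{k-1}A_k=D_k$, with base cases $[\g_1]=1=D_1$ and $[\g_2]=\alpha_{12}=D_2$. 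The hard part will be the bookkeeping rather than any deep idea: getting the orientation right in the fiber-class computation so that $f=A_k$ and not $A_k^{-1}$ or a conjugate, checking that the ad hoc section genuinely induces the standard inclusion used in the presentation, and justifying $[A_k,D_{k-1}]=1$ cleanly, all of which must be handled with care because $\mathcal{PB}_k$ is nonabelian.
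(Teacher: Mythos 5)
Your argument is correct, and while it shares the paper's overall strategy (reduce to the rigid rotation in $\mathcal{F}_k(\mc)$ fixing $\infty$, then induct on $k$ by peeling off one strand), the inductive step is genuinely different. The paper inserts the \emph{innermost} strand (radius $1$): since its cylinder lies inside all the others, the diagram of $[\g_{k+1}]$ is obtained by conjugation, $[\g_{k+1}]=(\s_1\cdots\s_{k-1})[\g_k](\s_{k-1}\cdots\s_1)$, which yields a word in the $\s_i$ that is only afterwards identified with $D_k=\a_{12}(\a_{13}\a_{23})\cdots(\a_{1k}\cdots\a_{k-1,k})$ via the Appendix (Lemma 5.3). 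You instead forget the \emph{outermost} strand via the Fadell--Neuwirth fibration and obtain the product decomposition $[\g_k]=A_kD_{k-1}$ with $A_k=\a_{1k}\cdots\a_{k-1,k}$ the fiber class; this lands directly on the $\a_{ij}$-word in the statement of the lemma, at the cost of (i) the trivialization/orientation bookkeeping you flag, (ii) checking that your disk-supported section induces the standard inclusion $\mathcal{PB}_{k-1}\hookrightarrow\mathcal{PB}_k$ (it does, since $\mathcal{F}_{k-1}$ of a large disk includes into $\mathcal{F}_{k-1}(\mc)$ as a homotopy equivalence), and (iii) the commutation $[A_k,D_{k-1}]=1$, which is cleanly supplied either by your disjoint-support picture or by the paper's own Lemma 5.1(3) applied to $A_k=(\s_{k-1}\cdots\s_1)(\s_1\cdots\s_{k-1})$. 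A pleasant feature of your route is that the commutation makes the usual path-composition convention ambiguity harmless, since $A_kD_{k-1}=D_{k-1}A_k=D_k$ either way; the paper's route, by contrast, needs no exact-sequence formalism but does lean on the $\s$-to-$\a$ conversion identities of the Appendix.
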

\begin{proof}
The loop $\g_{k+1}$ in $\mathcal{F}_{k+1}(\mc P^1)$ given by $$z\longmapsto([z:1],[2z:1],\ldots,[kz:1],[1:0])$$
fixes the point at $\infty$ and gives the pure braid in $\mc$
$$t\longmapsto(e^{2\pi it},2e^{2\pi it},\ldots,ke^{2\pi it})$$

By induction we assume that the class $[\g_k]$ in $\mathcal{F}_{k}(\mc P^1)$ is given by the element
$(\s_1\ldots\s_{k-2})\ldots(\s_1\s_2)\s_{1}^2(\s_2\s_1)\ldots(\s_{k-2}\ldots\s_1)$, where the $k-$points in $\mc P^1$ are $2,3,\ldots,k$ and $\infty$. In $\mathcal{F}_{k+1}(\mc P^1)$ we introduce a new strand corresponding to the point $1$. The image of the $j-$strand $t\rightarrow[je^{2\pi it}:1]$ lies on the cylinder $|z|=j,\,t\in[0,1]$, therefore the image of the strand corresponding to $1$ is interior to all the other cylinders and can be deformed to a straight line segment, like in the next figure

\begin{center}
\begin{picture}(250,180)

\thicklines

\put(250,13){\line(0,1){135}}
\put(90,58){\line(0,1){89}}
\put(90,36){\line(0,1){13}}
\put(90,10){\line(0,1){13}}
\put(10,80){\line(3,-1){198}}
\put(10,80){\line(3,1){40}}
\put(62,97){\line(3,1){25}}
\put(94,109){\line(3,1){111}}
\put(41,81){\line(1,1){46}}
\put(94,134){\line(1,1){11}}
\put(41,81){\line(1,-1){12}}
\put(62,59){\line(1,-1){45}}
\put(87.5,151){1}\put(104,151){2}\put(204,151){$k$}\put(243,151){$\infty$}\put(150,151){$\dots$}

\put(87.5,-3){1}\put(104,-3){2}\put(204,-3){$k$}\put(243,-3){$\infty$}\put(150,-3){$\dots$}
\end{picture}
\end{center}
In the diagram of the pure braid $[\g_{k+1}]$ we can start with $k-1$ intersections of first strand with the second,$
\ldots$, with the $k-$th strand, next add the diagram of $[\g_k]$ (the first strand corresponding to $1$ is in the $k-$th
position, so the word $[\g_k]$ is unchanged) and end with the second intersections of the first strand with the other
 $k-1$ strands. This gives the representation of $[\g_{k+1}]=(\s_1\ldots\s_{k-1})[\g_k](\s_{k-1}\ldots\s_1)$. Hence
$$
    \begin{array}{ll}
    [\g_{k+1}]&= (\s_1\s_2..\s_{k-1})\ldots(\s_1\s_2\s_3)(\s_1\s_2)\s_{1}^{2}(\s_2\s_1)
(\s_3\s_2\s_1)\ldots(\s_{k-1}..\s_2\s_1)\\
       & =\a_{12}(\a_{13}\a_{23})\ldots(\a_{1k}\a_{2k}..\a_{k-1,k})=D_k=\Delta_{k}^2
    \end{array}
$$
(see
Appendix for the last equalities).
\end{proof}

\section{Proofs of Theorem \ref{th:1} and Theorem \ref{th:2}}
The complex Grassmannian  manifolds $Gr^k(\mc P^n)$ are simply connected and the second homotopy group is stable ($n\geq k+1$)
$$\mathbb{Z}\cong\pi_2(\mc P^1)\mathop{\rightarrow}\limits^{\cong}\pi_2(\mc P^{k+1})\mathop{\rightarrow}\limits^{\cong}\pi_2(Gr^{k}(\mc P^{k+1}))\mathop{\rightarrow}\limits^{\cong}\pi_2(Gr^k(\mc P^n)),$$
so we can choose the map $$g:(D^2,S^1)\rightarrow(Gr^k(\mc P^n),L_1)$$
$$g(z)=L_z,\,\,\,\,L_z:(1-|z|)X_0-zX_1=0,X_{k+2}=\ldots=X_n=0$$
as a generator of $\pi_2(Gr^k(\mc P^n))$.

\begin{lemma}\label{l.2}
For $i\leq \min\{k,n-1\}$ we have $$\pi_{1}(\mathcal{F}_{k+1}^{i,i+1})\cong\pi_{1}(\mathcal{F}_{k+1}^{i,n})\,\,\,.$$
\end{lemma}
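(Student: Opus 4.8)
The plan is to present both fundamental groups as quotients of one and the same fiber group by the image of a connecting homomorphism, and then to identify those two images using the stability of $\pi_2$ of the Grassmannian recorded just before the Lemma.

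First I would apply Proposition \ref{pr:1} twice, to $\mathcal{F}_{k+1}^{i,i+1}$ and to $\mathcal{F}_{k+1}^{i,n}$, obtaining two locally trivial fibrations with the \emph{same} fiber $\mathcal{F}_{k+1}^{i,i}$:
$$\mathcal{F}_{k+1}^{i,i}\longrightarrow\mathcal{F}_{k+1}^{i,i+1}\mathop{\longrightarrow}\limits^{\gamma}Gr^i(\mc P^{i+1}),\qquad\mathcal{F}_{k+1}^{i,i}\longrightarrow\mathcal{F}_{k+1}^{i,n}\mathop{\longrightarrow}\limits^{\gamma}Gr^i(\mc P^n).$$
Here $i\le k$ guarantees, via the Corollary to Proposition \ref{pr:1}, that the fiber $\mathcal{F}_{k+1}^{i,i}$ is a nonempty Zariski open subset of $(\mc P^i)^{k+1}$. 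A fixed linear embedding $\mc P^{i+1}\hookrightarrow\mc P^n$ sends an $i$-plane to an $i$-plane and a spanning configuration to a spanning configuration, hence induces a morphism between these two fibrations which is the identity on the common fiber $\mathcal{F}_{k+1}^{i,i}$ and which on the bases is the natural inclusion $\iota\colon Gr^i(\mc P^{i+1})\hookrightarrow Gr^i(\mc P^n)$.

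Next I would write down the two homotopy long exact sequences. Since the complex Grassmannians are simply connected, the tails $\pi_1(Gr^i)=0$ give surjections, so for $\ast\in\{\,i+1,\,n\,\}$ one gets
$$\pi_1\big(\mathcal{F}_{k+1}^{i,\ast}\big)\cong\pi_1\big(\mathcal{F}_{k+1}^{i,i}\big)\big/\operatorname{im}\partial_\ast,$$
where $\partial_\ast\colon\pi_2(Gr^i(\mc P^\ast))\to\pi_1(\mathcal{F}_{k+1}^{i,i})$ is the connecting homomorphism. Naturality of $\partial$ with respect to the above morphism of fibrations, together with the identification being the identity on fibers, yields $\partial_{i+1}=\partial_n\circ\iota_*$, where $\iota_*\colon\pi_2(Gr^i(\mc P^{i+1}))\to\pi_2(Gr^i(\mc P^n))$ is induced by the base inclusion. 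Because $i\le n-1$ we are in the stable range, and the displayed chain of isomorphisms preceding the Lemma shows that $\iota_*$ is an isomorphism; therefore $\operatorname{im}\partial_{i+1}=\operatorname{im}(\partial_n\circ\iota_*)=\operatorname{im}\partial_n$ as subgroups of $\pi_1(\mathcal{F}_{k+1}^{i,i})$. The two quotients then coincide, giving the asserted isomorphism $\pi_1(\mathcal{F}_{k+1}^{i,i+1})\cong\pi_1(\mathcal{F}_{k+1}^{i,n})$.

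The main obstacle is not any single computation but making the two ingredients mesh: one must check that the embedding $\mc P^{i+1}\hookrightarrow\mc P^n$ genuinely induces a morphism of the Proposition \ref{pr:1} fibrations that restricts to the \emph{identity} on the fiber $\mathcal{F}_{k+1}^{i,i}$ (so that the induced map on $\pi_1$ of the fiber is the identity, not merely some isomorphism), and that the induced map on $\pi_2$ of the bases is exactly the stability isomorphism recorded above. Once the naturality square for $\partial$ is in place, the equality of the two images, and hence the conclusion, is immediate.
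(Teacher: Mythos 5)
Your proposal is correct and follows essentially the same route as the paper: the canonical embedding $\mc P^{i+1}\hookrightarrow\mc P^n$ induces a morphism between the two Proposition \ref{pr:1} fibrations with common fiber $\mathcal{F}_{k+1}^{i,i}$, and the comparison of the two homotopy exact sequences via the stability isomorphism $\pi_2(Gr^i(\mc P^{i+1}))\cong\pi_2(Gr^i(\mc P^n))$ identifies the images of the connecting homomorphisms and hence the quotients. The paper phrases this as a commutative ladder of exact sequences rather than explicitly as $\operatorname{im}\partial_{i+1}=\operatorname{im}\partial_n$, but the content is identical.
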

\begin{proof}
The canonical embedding
$$\mc P^{i+1}\longrightarrow\mc P^n,\,\,\,\,\,[z_0:\ldots:z_i]\mapsto[z_0:\ldots:z_i:0:\ldots:0]$$
induces the following commutative diagram of fibrations
\begin{center}
\begin{picture}(300,100)
\thicklines
\put(215,75){\vector(0,-1){30}}
\put(147,80){\vector(1,0){40}}
\put(127,75){\vector(0,-1){30}}
\put(56,75){\vector(0,-1){30}}
\put(140,35){\vector(1,0){40}}
\put(75,80){\vector(1,0){40}}
\put(75,35){\vector(1,0){40}}
\put(120,80){$\mathcal{F}_{k+1}^{i,i+1}$}
\put(120,30){$\mathcal{F}_{k}^{i,n}$}
\put(190,80){$ Gr^i(\mc P^{i+1})$}
\put(190,30){$Gr^i(\mc P^n)$}
\put(50,80){$\mathcal{F}_{k+1}^{i,i}$}
\put(50,30){$\mathcal{F}_{k+1}^{i,i}$}
\end{picture}
\end{center}
and the result is obtained from the commutative diagram of homotopy groups
\vspace{-0.7cm}
\begin{center}
\begin{picture}(300,120)
\thicklines
\put(230,75){\vector(0,-1){30}}
\put(167,80){\vector(1,0){30}}
\put(137,75){\vector(0,-1){30}}
\put(80,75){\vector(0,-1){30}}
\put(167,35){\vector(1,0){30}}
\put(85,80){\vector(1,0){30}}
\put(85,35){\vector(1,0){30}}
\put(250,80){\vector(1,0){30}}
\put(250,35){\vector(1,0){30}}
\put(120,80){$\pi_1(\mathcal{F}_{k+1}^{i,i}$)}
\put(120,30){$\pi_1(\mathcal{F}_{k+1}^{i,i}$)}
\put(200,80){$\pi_1(\mathcal{F}_{k+1}^{i,i+1}$)}
\put(200,30){$\pi_1(\mathcal{F}_{k+1}^{1,n}$)}
\put(285,76){1}
\put(285,32){1}
 \put(35,80){\vector(1,0){30}}
\put(35,35){\vector(1,0){30}} \put(15,78){$\ldots$}
\put(15,33){$\ldots$} \put(75,80){$\mathbb{Z}$}
\put(73,30){$\mathbb{Z}$} \put(85,55){$\cong$} \put(140,55){$\cong$}
\end{picture}
\end{center}
\vspace{-1.0cm}
\end{proof}
The top dimensional strata are simply connected.
\begin{lemma}\label{l.4}
For any $k$ we have $\pi_{1}(\mathcal{F}_{k}^{k-1,k})\cong\pi_{1}(\mathcal{F}_{k+1}^{k,k})=1$.
\end{lemma}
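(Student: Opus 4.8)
The plan is to first identify the two fundamental groups and then to trivialize them by induction on $k$, feeding the configuration spaces into one another through the fibrations already established. By Lemma~\ref{l.7} the forgetful map $p\colon\mathcal{F}_{k+1}^{k,k}\to\mathcal{F}_{k}^{k-1,k}$ is a locally trivial fibration with fiber $\mc P^k\setminus\mc P^{k-1}\approx\mc^k$, which is contractible. Since the base is path-connected (Proposition~\ref{l.5}), the homotopy exact sequence of this fibration collapses to an isomorphism $\pi_1(\mathcal{F}_{k+1}^{k,k})\cong\pi_1(\mathcal{F}_{k}^{k-1,k})$; in fact the projection is a weak equivalence. Thus it suffices to prove $\pi_1(\mathcal{F}_{k}^{k-1,k})=1$, which I take as the inductive statement $P(k)$.

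For the inductive step, assume $P(k-1)$, i.e.\ $\pi_1(\mathcal{F}_{k-1}^{k-2,k-1})=1$. First I would apply the Remark following Lemma~\ref{l.7}, with its ``$k$'' replaced by $k-1$ and ambient dimension $n=k$: the map $\mathcal{F}_{k}^{k-1,k}\to\mathcal{F}_{k-1}^{k-2,k}$ dropping the last point is a locally trivial fibration whose fiber $\mc P^k\setminus\mc P^{k-2}$ is simply connected, the removed $\mc P^{k-2}$ having real codimension $4$ in $\mc P^k$. Both spaces being path-connected by Proposition~\ref{l.5}, the homotopy exact sequence again gives $\pi_1(\mathcal{F}_{k}^{k-1,k})\cong\pi_1(\mathcal{F}_{k-1}^{k-2,k})$. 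Next I would invoke Lemma~\ref{l.2} to strip the ambient dimension down to the minimal one: with $i=k-2$ one checks $i=k-2\le\min\{k-2,k-1\}$, so Lemma~\ref{l.2} applies and yields $\pi_1(\mathcal{F}_{k-1}^{k-2,k})\cong\pi_1(\mathcal{F}_{k-1}^{k-2,k-1})$. Chaining these isomorphisms,
$$\pi_1(\mathcal{F}_{k+1}^{k,k})\cong\pi_1(\mathcal{F}_{k}^{k-1,k})\cong\pi_1(\mathcal{F}_{k-1}^{k-2,k})\cong\pi_1(\mathcal{F}_{k-1}^{k-2,k-1})=1,$$
the last equality being $P(k-1)$, since $\mathcal{F}_{k-1}^{k-2,k-1}$ is precisely the top stratum in $\mc P^{k-1}$.

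For the base case $k=1$ I would note $\mathcal{F}_{2}^{1,1}=\mathcal{F}_2(\mc P^1)$, which fibers over $\mc P^1=S^2$ by forgetting one point with contractible fiber $\mc$, hence is homotopy equivalent to $S^2$ and simply connected; equivalently $\mathcal{F}_1^{0,1}=\mc P^1$ is simply connected. The routine part is the bookkeeping of indices and the repeated appeal to the homotopy exact sequence; the only genuine geometric input beyond the fibrations already proved is that deleting the codimension-$4$ subspace $\mc P^{k-2}$ from $\mc P^k$ leaves a \emph{simply connected} fiber, which is exactly what promotes each fibration map to an isomorphism on $\pi_1$ rather than a mere surjection. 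Accordingly, I expect the main point to watch is that the fibers remain simply connected (and nonempty, path-connected) at every stage, so that no spurious $\pi_1$ is created as the ambient dimension is lowered.
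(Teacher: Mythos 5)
Your argument is correct, and its overall shape --- induction on $k$ driven by the fibrations of Section 2, with Lemma \ref{l.7} handling the passage from $\mathcal{F}_{k}^{k-1,k}$ to $\mathcal{F}_{k+1}^{k,k}$ --- matches the paper's. Where you diverge is in how the inductive hypothesis enters. The paper feeds it through the Grassmannian fibration of Proposition \ref{pr:1}, namely $\mathcal{F}_{k}^{k-1,k-1}\hookrightarrow\mathcal{F}_{k}^{k-1,k}\to Gr^{k-1}(\mc P^k)$: since the base is simply connected, $\pi_1(\mathcal{F}_{k}^{k-1,k})$ is a quotient of $\pi_1(\mathcal{F}_{k}^{k-1,k-1})$, and the latter is trivial by the level-$(k-1)$ case (in its second form $\pi_1(\mathcal{F}_{(k-1)+1}^{k-1,k-1})=1$), so a single step suffices. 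You instead forget a point (the Remark after Lemma \ref{l.7}, whose fiber $\mc P^k\setminus\mc P^{k-2}$ is simply connected by the real-codimension-$4$ count) and then lower the ambient dimension with Lemma \ref{l.2}, arriving at $\mathcal{F}_{k-1}^{k-2,k-1}$, the level-$(k-1)$ case in its first form. Your route is one step longer but has the mild virtue that every link in the chain is an isomorphism of fundamental groups valid irrespective of what those groups are, whereas the paper's Grassmannian step only yields a surjection $\pi_1(\mathcal{F}_{k}^{k-1,k-1})\twoheadrightarrow\pi_1(\mathcal{F}_{k}^{k-1,k})$ and so genuinely needs the inductive triviality of the fiber's $\pi_1$; conversely, the paper's version avoids invoking Lemma \ref{l.2} and the Remark altogether. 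Your index bookkeeping in Lemma \ref{l.2} (with $i=k-2$, $n=k$, so that $i=\min\{k-2,k-1\}$) checks out, and the base case is handled the same way in both proofs.
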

\begin{proof}
The proof is by induction on $k$ and uses the  fibration of Lemma \ref{l.7}
with contractible fiber $\mc P^k\setminus \mc P^{k-1}$ and the fibration
$$\mathcal{F}_{k}^{k-1,k-1}\hookrightarrow\mathcal{F}_{k}^{k-1,k}\longrightarrow
Gr^{k-1}(\mc P^k)\,\,.$$ For
$k=1$, $\pi_1(\mathcal{F}_{1}^{0,1})=\pi_1(\mc P^1)=1$
 and $\pi_1(\mathcal{F}_{2}^{1,1})=\pi_1(\mathcal{F}_2(\mc P^1))=1$. The later fibration
 gives $\pi_{1}(\mathcal{F}_{k}^{k-1,k})=1$ and the former gives $\pi_{1}(\mathcal{F}_{k+1}^{k,k})=1$.
\end{proof}
The next result covers the simply connectedness cases of Theorem \ref{th:1}.
\begin{proposition}\label{pr:2}
 $\pi_1(\mathcal{F}_{k}^{i,n})=1$ for $i\geq2$ and $k\geq i+1$.
 \end{proposition}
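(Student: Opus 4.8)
The plan is to reduce the whole problem to the "diagonal" strata $\mathcal{F}_{k}^{i,i}$, and then to build those up from the simply connected base configuration $\mathcal{F}_{i+1}^{i,i}$. First I would invoke the Grassmannian fibration of Proposition \ref{pr:1},
$$\gamma:\mathcal{F}_{k}^{i,n}\longrightarrow Gr^i(\mc P^n),$$
whose fiber is $\mathcal{F}_{k}^{i,i}$. Since $Gr^i(\mc P^n)$ is simply connected, the homotopy exact sequence of this fibration shows that $\pi_1(\mathcal{F}_{k}^{i,n})$ is a quotient of $\pi_1(\mathcal{F}_{k}^{i,i})$. Hence it suffices to prove $\pi_1(\mathcal{F}_{k}^{i,i})=1$ for $i\geq2$ and $k\geq i+1$, and the general $n$ follows at once.

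For the base of an induction on $k$, the case $k=i+1$ gives $\mathcal{F}_{i+1}^{i,i}$, which is $\mathcal{F}_{k+1}^{k,k}$ with $k=i$ and so is simply connected by Lemma \ref{l.4}. For $k>i+1$, I set $A=\{1,\ldots,i+1\}$ and use the fibration of Lemma \ref{l.6.5},
$$P_A:\mathcal{F}_{k}^{\{A\},i}\longrightarrow\mathcal{F}_{i+1}^{i,i},$$
with fiber $\mathcal{F}_{k-i-1}(\mc P^i\setminus\{x_1,\ldots,x_{i+1}\})$. Here the hypothesis $i\geq2$ is essential: in that range $\mc P^i$ minus finitely many points is a simply connected manifold (the deleted points have real codimension $2i\geq4$) of real dimension $2i\geq3$, so by the fact recalled in the Introduction its ordered configuration space, that is, the fiber, is simply connected. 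As the base $\mathcal{F}_{i+1}^{i,i}$ is also simply connected, the exact sequence gives $\pi_1(\mathcal{F}_{k}^{\{A\},i})=1$.

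It then remains to pass from the open piece $\mathcal{F}_{k}^{\{A\},i}$ to the full stratum $\mathcal{F}_{k}^{i,i}$. The space $\mathcal{F}_{k}^{i,i}$ is a Zariski-open, hence smooth and irreducible, subset of $(\mc P^i)^k$, and $\mathcal{F}_{k}^{\{A\},i}$ is the open subset on which $x_1,\ldots,x_{i+1}$ already span $\mc P^i$. Its complement inside $\mathcal{F}_{k}^{i,i}$, where these $i+1$ points fail to span, is cut out by the vanishing of the relevant maximal minors and is therefore a proper algebraic subvariety, of complex codimension $\geq1$ and thus of real codimension $\geq2$. A generic loop, having real dimension $1$, can be pushed off a set of real codimension $\geq2$, so the inclusion induces a surjection on fundamental groups; since the source $\pi_1(\mathcal{F}_{k}^{\{A\},i})$ is trivial, we conclude $\pi_1(\mathcal{F}_{k}^{i,i})=1$, and combining with the first paragraph finishes the proof.

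I expect the main obstacle to be this last transition. It rests on three points one must verify with care: that $\mathcal{F}_{k}^{i,i}$ is a smooth irreducible variety, that the non-spanning locus of $x_1,\ldots,x_{i+1}$ is a \emph{proper} subvariety of real codimension at least $2$, and that a transversality/general-position argument then makes the inclusion surjective on $\pi_1$. The other delicate place is the simple connectivity of the fiber $\mc P^i\setminus\{\text{finite set}\}$, which is precisely where $i\geq2$ enters; for $i=1$ this fails, and the resulting punctured sphere is exactly what produces the nontrivial braid relations in Theorem \ref{th:1}.
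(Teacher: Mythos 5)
Your argument is correct, and it coincides with the paper's proof up to a point: both reduce to the diagonal stratum $\mathcal{F}_{k}^{i,i}$ via the Grassmannian fibration with simply connected base, both handle $k=i+1$ by Lemma \ref{l.4}, and both establish $\pi_1(\mathcal{F}_{k}^{\{A\},i})=1$ from the fibration of Lemma \ref{l.6.5}, using that $\mc P^i\setminus\{i+1\ \hbox{points}\}$ is simply connected of real dimension $\geq 3$ when $i\geq 2$. The divergence is in the last step. The paper does not pass from a single open piece to the whole stratum; it covers $\mathcal{F}_{k}^{i,i}$ by \emph{all} the open sets $\mathcal{F}_{k}^{\{A\},i}$, $A\in{\{1,\ldots,k\}\choose{i+1}}$, each simply connected by the same fibration argument, and applies Seifert--Van Kampen. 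The cost of that route is having to know that every intersection $\mathcal{F}_{k}^{\{A_1\},i}\cap\ldots\cap\mathcal{F}_{k}^{\{A_r\},i}=\mathcal{F}_{k}^{\mathcal{A},i}$ is path-connected, which is precisely what Proposition \ref{l.5} (resting on Lemmas \ref{l.2.9} and \ref{l.2.10}) is there to supply. You instead use one chart and the observation that its complement in $\mathcal{F}_{k}^{i,i}$ --- the locus where $x_1,\ldots,x_{i+1}$ fail to span --- is a proper closed algebraic subvariety of an irreducible smooth variety, hence of real codimension $\geq 2$, so that the inclusion is surjective on $\pi_1$. That step is sound: only surjectivity is needed, and pushing loops off a stratified set of real codimension $\geq 2$ is standard. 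What you gain is brevity and the elimination of all the connectivity bookkeeping of Section 2; what you give up is the paper's self-contained style, which stays entirely within elementary fibration, section, and covering arguments and never appeals to transversality for algebraic subvarieties. Either way the conclusion $\pi_1(\mathcal{F}_{k}^{i,i})=1$, and hence $\pi_1(\mathcal{F}_{k}^{i,n})=1$, follows.
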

  \begin{proof}
To prove that $\mathcal{F}_{k}^{i,i}$ is simply connected we use
Seifert-Van Kampen theorem \cite{H} for the finite open covering $\mathcal{F}_{k}^{i,i}=\bigcup\big\{\mathcal{F}_{k}^{\mathcal{A},i}\big|\mathcal{A}=\{A\},A\in
{\{1,\ldots,k\} \choose{i+1}}\big\}$. By Lemma \ref{l.4} we have $\pi_{1}(\mathcal{F}_{i+1}^{i,i})=1$ and
using  fibrations as in Lemma \ref{l.6.5} we obtain
$$\pi_1(\mathcal{F}_{k-i-1}(\mc P^i\setminus\{i+1\,\hbox{points}\}))\rightarrow\pi_{1}(\mathcal{F}_{k}^{\{A\},i})\rightarrow \pi_{1}(\mathcal{F}_{i+1}^{i,i})\, ,$$
all the pieces of the covering are simply connected: the fiber $ \mathcal{F}_{k-i-1}(\mc P^i\setminus\{i+1\,\hbox{points}\})$
is simply connected for $i\geq2$ because $\mc P^i\setminus\{i+1\,\hbox{points}\}$ is simply connected (for the special case $k=i+1,\,\mathcal{F}_{k}^{\{A\},i}=\mathcal{F}_{i+1}^{i,i}$). Also, every intersection
$\mathcal{F}_{k}^{\{A_1\},i}\cap\ldots\cap\mathcal{F}_{k}^{\{A_r\},i}=\mathcal{F}_{k}^{\mathcal{A},i}$
(where $\mathcal{A}=\{A_1,\ldots,A_r\}$) is path connected. 
Using the fibration in the proof of Lemma \ref{l.2}, $\pi_1(\mathcal{F}_{k}^{i,i})=1$ implies $\pi_1(\mathcal{F}_{k}^{i,n})=1$ for $n\geq i\geq 2$.
 \end{proof}
Using the geometrical fibrations introduce in previous section we start the inductive proof of Theorem \ref{th:1}.

\subsection{Proof of Theorem \ref{th:1}}
From Lemma \ref{l.4} $\mathcal{F}_{k+1}^{k,k},\mathcal{F}_{k+1}^{k,k+1}$ are simply connected spaces, and for $k\geq1$ $\mathcal{F}_{k+1}^{1,1}=\mathcal{F}_{k+1}(\mc P^1)$, so we have the classical result (see \cite{F})
$$\pi_1(\mathcal{F}_{k+1}^{1,1})=<\a_{ij}\,,\,\,1\leq i<j\leq k\big|(YB3),(YB4),D_{k}^2=1>.$$ Using Proposition \ref{pr:2} we have $\pi_1(\mathcal{F}_{k}^{i,n})=1$ for $n\geq i\geq2, k\geq i+1$ and Lemma \ref{l.2} implies  $\pi_{1}(\mathcal{F}_{k+1}^{1,2})\cong\pi_{1}(\mathcal{F}_{k+1}^{1,n}),\,\,\,(k\geq1$), so we have to compute $\mathcal{F}_{k+1}^{1,2}$. Consider the following  fibration (see Proposition \ref{pr:1})
$$ p:\mathcal{F}_{k+1}^{1,2}\rightarrow Gr^1(\mc P^2)$$
with fiber $\mathcal{F}_{k+1}(\mc P^1)$.
Since $\pi_1(Gr^1(\mc P^2))=1$ and $\pi_2(Gr^1(\mc P^2))=\pi_2(\mc P^2)\cong\mathbb{Z}$, the homotopy exact sequence
$$\ldots\longrightarrow\pi_2(\mathcal{F}_{k+1}^{1,2})\longrightarrow \mathbb{Z}\mathop{\longrightarrow}\limits^{\delta_*}\pi_1(\mathcal{F}_{k+1}(\mc P^1))\longrightarrow\pi_1(\mathcal{F}_{k+1}^{1,2})\longrightarrow 1$$
gives $\pi_1(\mathcal{F}_{k+1}^{1,2})=\pi_1\big(\mathcal{F}_{k+1}(\mc P^1)\big)\big/\big<Im \delta_{*}\big>.$

Let $[g]$ be the generator of $\pi_2(Gr^1(\mc P^2),L_1),\,\,$  $L_1$ is the line $X_1=0$ ,
$$g:(D^2,S^1)\longrightarrow (Gr^1(\mc P^2),L_1),\,\,\,\,z\longmapsto L_z:(1-|z|)X_0-zX_1=0\,.$$

We choose the lift $\widetilde{g}$
\begin{center}
\begin{picture}(360,100)
\thicklines
\put(180,65){\vector(0,-1){30}}
\put(100,25){\vector(1,0){50}}
\put(103,40){\vector(2,1){50}}
\put(55,20){$(D^2,S^1)$}
\put(160,20){$(Gr^i(\mc P^2),L_1)$}
\put(160,75){$\big(\mathcal{F}_{k+1}^{1,2},\mathcal{F}_{k+1}(L_1)\big)$}
\put(184,47){$p$}
\put(120,55){$\widetilde{g}$}
\put(114,15){$g$}
\end{picture}
\end{center}
$\widetilde{g}(z)=([z:1-|z|:1],[z:1-|z|:\frac{1}{2}],\ldots,[z:1-|z|:\frac{1}{k}],[z,1-|z|,0])$ and the image $\delta_{*}[g]$ is given by the restriction $\g=\widetilde{g}\big|_{S^1}:\,\,S^1\longrightarrow \mathcal{F}_{k+1}(\mc P^1)$
$$\g(z)=([z:1],[z:\frac{1}{2}],\ldots,[z:\frac{1}{k}],[1:0])=
([z:1],[2z:1],\ldots,[kz:1],[1:0]).$$
By the Lemma \ref{l.6}, $[\g]=\a_{12}\a_{13}\a_{23}\ldots\a_{1k}\ldots\a_{k-1,k}=D_{k}$, hence we obtain
$$
  \begin{array}{ll}
    \pi_1(\mathcal{F}_{k+1}^{1,2})&=\pi_1\big(\mathcal{F}_{k+1}(\mc P^1)\big)\big/\big<Im \delta_{*}\big> \\
    &=\big<\a_{ij}\,,\,i\leq i<j\leq k\big|(YB3)_k,(YB4)_k,D_{k}^2=1,D_{k}=1\big> \\
    &=\big<\a_{ij}\,,\,i\leq i<j\leq k\big|(YB3)_k,(YB4)_k,\,D_{k}=1\big>.\,\,\,\,\,\,\,\,\,\,\,\,\,\,\,\, \square
    \end{array}
$$

\noindent\textbf{Proof of Corollary\ref{l.1.2}}.
In $\pi_1(\mathcal{F}_{k+1}^{1,1})_{ab}$ we have ${k\choose{2}}$ generators $\big(a_{ij}\big)_{1\leq i<j\leq k}$ and one relation: $2(a_{12}+a_{13}+a_{23}+\ldots+a_{k-1,k})=0$. Changing the last generator $a_{k-1,k}$ with $a=a_{12}+a_{13}+a_{23}+\ldots+a_{k-1,k}$ we obtain a presentation of $\mathbb{Z}^{{k\choose 2}-1}\oplus \mathbb{Z}_2$. In the second case $a_{k-1,k}$ can be eliminated.\hspace{2.4cm}$\square$
\begin{example}\label{ex:1}
For $k=2$ we have $\pi_1(\mathcal{F}_{3}^{1,2})=1,\,\pi_2(\mathcal{F}_{3}^{1,2})\cong\mathbb{Z}$; for $k=3$, $\pi_1(\mathcal{F}_{4}^{1,2})\cong\mathbb{F}_2.$
\end{example}
For $k=2$, $\pi_2(\mathcal{F}_3(\mc P^1))=1$ (see \cite{B2}). Using $\pi_2(Gr^1(\mc P^2))\cong\mathbb{Z}$ and the fibration
$\mathcal{F}_3(\mc P^1)\rightarrow \mathcal{F}_{3}^{1,2}\rightarrow Gr^1(\mc P^2)$, we have
$$0\longrightarrow\pi_2(\mathcal{F}_{3}^{1,2}) \longrightarrow \mathbb{Z}\mathop{\longrightarrow}\limits^{\delta_*}\mathbb{Z}_2\longrightarrow\pi_1(\mathcal{F}_{3}^{1,2})\longrightarrow1\,\,;$$
since the image of $\delta_*$ contains $D_2=\a_{12}$ (see Lemma \ref{l.6}), $\delta_{*}$ is surjective and  $\pi_1(\mathcal{F}_{3}^{1,2})=1$ and also $\pi_2(\mathcal{F}_{3}^{1,2})=\mathbb{Z}$.

 For $k=3,\,\,\pi_1(\mathcal{F}_{4}^{1,2})$ is generated by $\a_{12},\a_{13},\a_{23}$ with defining relations  $$\a_{12}\a_{13}\a_{23}=\a_{13}\a_{23}\a_{12}=\a_{23}\a_{12}\a_{13}=1,$$
therefore any of $\a_{ij} $ can be discarded: $\pi_1(\mathcal{F}_{4}^{1,2})\cong\mathbb{F}(\a_{12},\a_{23})\cong\mathbb{F}(\a_{13},\a_{23})\cong\mathbb{F}(\a_{12},\a_{13})$.\hspace{10.1cm} $\square$\\
\begin{remark}
For any continuous maps $f,g:S^2\rightarrow S^2$ without fixed points, there is an $x\in S^2$ such that $f(x)=g(x)$. More generally, for any map $F: S^2\rightarrow\mathcal{F}_3(S^2)$, the three projections $pr_j\circ F$ are homotopically trivial.
\end{remark}
\begin{proof}
The composition $\pi_2(S^2)\mathop{\longrightarrow}\limits_{F_*}\pi_2(\mathcal{F}_3(S^2))\mathop{\longrightarrow}\limits_{pr_*}\pi_2(S^2)$ is trivial.
\end{proof}

\subsection{Proof of Theorem \ref{th:2}} First we have
$\pi_{1}(\mathcal{C}_{k}^{i,n})\cong\Sigma_{k}$ for $2\leq i\leq \min(k+1,n)$ as
a consequence of Theorem \ref{th:1} and the regular covering
$p:\mathcal{F}_{k}^{i,n}\longrightarrow\mathcal{C}_{k}^{i,n}$. Next we have
$$\pi_1(\mathcal{C}_{k+1}^{1,1})=\pi_1(\mathcal{C}_{k+1}(\mc P^1))\cong\mathcal{B}_{k+1}/\big<\s_1\ldots\s_{k-1}\s_{k}^{2}\s_{k-1}\ldots\s_1=1\big>.$$
Finally we have to compute  $\pi_1(\mathcal{C}_{k+1}^{1,n})$ for $n,k\geq2$. The following commutative diagram of the fibrations and coverings
\begin{center}
\begin{picture}(280,150)
\thicklines
\put(215,65){\vector(0,-1){30}}
\put(215,115){\vector(0,-1){30}}
\put(207,120){$\{*\}$}
\put(147,70){\vector(1,0){40}}
\put(127,115){\vector(0,-1){30}}
\put(124,120){$\Sigma_{k+1}$}
\put(127,65){\vector(0,-1){30}}
\put(56,65){\vector(0,-1){30}}
\put(56,115){\vector(0,-1){30}}
\put(51,120){$\Sigma_{k+1}$}
\put(140,25){\vector(1,0){40}}

\put(154,120){\vector(1,0){40}}
\put(78,120){\vector(1,0){40}}

\put(75,70){\vector(1,0){40}}
\put(75,25){\vector(1,0){40}}
\put(120,70){$\mathcal{F}_{k+1}^{1,n}$}
\put(120,20){$\mathcal{C}_{k+1}^{1,n}$}
\put(190,70){$ Gr^1(\mc P^{n})$}
\put(190,20){$Gr^1(\mc P^n)$}
\put(50,70){$\mathcal{F}_{k+1}^{1,1}$}
\put(50,20){$\mathcal{C}_{k+1}^{1,1}$}

\end{picture}
\end{center}
induces the commutative diagram of homotopy groups
\begin{center}
\begin{picture}(330,180)
\thicklines

 \put(215,170){$1$}
\put(124,170){$1$}
\put(127,165){\vector(0,-1){20}}
\put(215,165){\vector(0,-1){20}}
\put(127,75){\vector(0,-1){30}}
\put(120,35){$\Sigma_{k+1}$}
\put(210,35){$\Sigma_{k+1}$}
  \put(127,28){\vector(0,-1){20}}
  \put(217,28){\vector(0,-1){20}}
   \put(125,-3){$1$}

   \put(215,-3){$1$}
\put(127,125){\vector(0,-1){30}}
\put(65,125){\vector(0,-1){30}}
\put(219,75){\vector(0,-1){30}}
\put(220,125){\vector(0,-1){30}}
\put(160,130){\vector(1,0){30}}
\put(160,85){\vector(1,0){30}}
\put(250,85){\vector(1,0){30}}
\put(250,130){\vector(1,0){30}}
\put(77,130){\vector(1,0){30}}
\put(85,135){$\delta_*$}
\put(85,90){$\delta_{*}'$}
\put(77,85){\vector(1,0){30}}
 \put(25,130){\vector(1,0){30}}
 \put(25,85){\vector(1,0){30}}
  \put(8,130){$\ldots$}
  \put(8,85){$\ldots$}
\put(200,130){$\pi_1(\mathcal{F}_{k+1}^{1,n})$}
\put(200,80){$\pi_1(\mathcal{C}_{k+1}^{1,n}$)}
\put(286,125){$ 1$}
\put(286,81){$1$}
\put(115,130){$\pi_1(\mathcal{F}_{k+1}^{1,1}$)}
\put(115,80){$\pi_1(\mathcal{C}_{k+1}^{1,1}$)}
\put(63,127){$\mathbb{Z}$}
\put(63,82){$\mathbb{Z}$}
\put(70,110){$\cong$}
\put(155,37){\vector(1,0){45}}
\end{picture}
\end{center}

Since $Im\delta_*=<D_k>$, the left square  gives $Im\delta_{*}'=<\Delta_{k}^2>$, therefore
$$\pi_1(\mathcal{C}_{k+1}^{1,n})\cong\mathcal{B}_{k+1}\big/\big<\s_1\ldots\s_{k-1}\s_{k}^{2}\s_{k-1}\ldots\s_1,(\s_1\s_2\s_1\ldots\s_{k-1}\ldots\s_1)^2\big>.\,\,\,\,\,\,\,\,\,\square$$

\textbf{Proof of Corollary \ref{l.1.4}}. The abelianized group $\pi_1(\mathcal{C}_{k+1}^{1,1})_{ab}$ is generated by $\big(s_{i}\big)_{i=1,\ldots,k}$ and the relations $s_{i+1}=s_i$ and $2(s_{1}+\ldots+s_k)=0$ and this gives a presentation of $\mathbb{Z}_{2k}$. The group $\pi_1(\mathcal{C}_{k+1}^{1,n})_{ab}$ has again one generator $s$ and the relations $2ks=0$ and $k(k-1)s=0$. The greatest common divisor of $2k$ and $k(k-1)$ is $k$ for $k$ even and $2k$ for $k$ odd, hence the result.\hspace{0.7cm}$\square$
\section{Pappus' Configurations}
Let us define the {\em{space of Pappus' configurations}} (in $\mc P^2$) by\\
$\mathcal{P}=\big\{(A_1,B_1,C_1,A_2,B_2,C_2)\in\mathcal{F}_{6}^{2,2}\big|(A_i,B_i,C_i)\in\mathcal{F}_{3}^{1,2},A_i,B_i,C_i\neq I\big\}$\\
(here $I=d_1\cap d_2$, where $d_i$ is the line containing $A_i,B_i,C_i,\,\,i=1,2$)
\begin{center}
\begin{picture}(250,180)
\thicklines
\put(10,80){\line(3,-1){170}}
\put(10,60){\line(3,1){170}}
\put(60,84){$A_1$} \put(100,99){$B_1$} \put(140,110){$C_1$} \put(190,120){$d_1$}
\put(60,75){$\bullet$} \put(100,87.5){$\bullet$} \put(140,101){$\bullet$}
\put(60,50){$A_2$} \put(100,38){$B_2$} \put(140,25){$C_2$} \put(190,18){$d_2$}
\put(60,59){$\bullet$} \put(100,47){$\bullet$} \put(140,34){$\bullet$}
\put(37,67){$\bullet$}
\put(35,75){$I$}
\end{picture}
\end{center}
 and also the{ \em{space of Pappus' configurations with a fixed intersection point}} $I\in\mc P^2$ as
 $$\mathcal{P}_{I}=\big\{(A_1,B_1,C_1,A_2,B_2,C_2)\in\mathcal{P}\big|d_1\cap d_2=I\big\}.$$

 In order to find the fundamental group of $\mathcal{P}$ and $\mathcal{P}_I$ we apply the same method: define two fibrations and use their homotopy exact sequences.
\begin{lemma}\label{p1}
The following projection
$$\Gamma:\mathcal{P}\rightarrow \mc P^2,\,(A_1,B_1,C_1,A_2,B_2,C_2)\mapsto I=d_1\cap d_2$$
is a locally trivial fibration with fiber $\mathcal{P}_I.$
\end{lemma}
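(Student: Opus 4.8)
The plan is to follow the trivialization scheme already used for the fibrations of Proposition \ref{pr:1} and Lemma \ref{l.7}: exploit the transitivity of the projective group $PGL_3(\mc)$ on points of $\mc P^2$ to move a fixed base point $I_0$ to nearby points $I$ by a continuous family of projective transformations, and transport the fibers accordingly.

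First I would check that $\Gamma$ is well defined and continuous. For a configuration $(A_1,B_1,C_1,A_2,B_2,C_2)\in\mathcal{P}$ the triples $(A_i,B_i,C_i)$ lie on lines $d_1,d_2$; since the six points span $\mc P^2$, the lines $d_1$ and $d_2$ are distinct (otherwise all six points would be collinear, contradicting $(A_1,\dots,C_2)\in\mathcal{F}_{6}^{2,2}$), so they meet in a single point $I=d_1\cap d_2$. The map assigning to a triple in $\mathcal{F}_{3}^{1,2}$ its spanning line is continuous, and the intersection of two distinct lines in $\mc P^2$ depends continuously on the pair of lines; hence $\Gamma$ is continuous, and its fiber over $I$ is by definition $\mathcal{P}_I$.

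Next, fix $I_0\in\mc P^2$ and choose a line $\ell_0$ with $I_0\notin\ell_0$. Put $\mathcal{U}=\mc P^2\setminus\ell_0$, an open neighborhood of $I_0$ which, in the affine chart determined by $\ell_0$, is identified with $\mc^2$. For $I\in\mathcal{U}$ let $\varphi_I$ be the affine translation of this chart carrying $I_0$ to $I$; each $\varphi_I$ extends to a projective automorphism of $\mc P^2$, the assignments $I\mapsto\varphi_I$ and $I\mapsto\varphi_I^{-1}$ are continuous, and $\varphi_{I_0}=\mathrm{id}$. I would then define
$$f:\Gamma^{-1}(\mathcal{U})\longrightarrow\mathcal{U}\times\mathcal{P}_{I_0},\quad (A_1,\dots,C_2)\longmapsto\big(I,\varphi_I^{-1}(A_1),\dots,\varphi_I^{-1}(C_2)\big),$$
where $I=\Gamma(A_1,\dots,C_2)$, with inverse $(I,(A_1,\dots,C_2))\mapsto(\varphi_I(A_1),\dots,\varphi_I(C_2))$; clearly $pr_1\circ f=\Gamma$.

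The remaining point is to verify that $f$ is a well-defined homeomorphism preserving fibers. Because $\varphi_I$ is a projective isomorphism it preserves collinearity, independence and distinctness, so it carries the incidence pattern of a Pappus configuration to the same pattern; moreover $\varphi_I^{-1}(I)=I_0$, so the transported lines meet at $I_0$, and since none of the six original points equalled $I$, none of the images equals $I_0$. Hence $\varphi_I^{-1}$ maps the configuration into $\mathcal{P}_{I_0}$, and symmetrically $\varphi_I$ maps $\mathcal{P}_{I_0}$ into $\mathcal{P}_I$; thus $f$ and its inverse are well defined and mutually inverse, and both are continuous by the continuous dependence of $\varphi_I,\varphi_I^{-1}$ on $I$. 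I expect the only genuinely geometric step to be the continuity of $\Gamma$ — i.e. the continuous dependence of the intersection point on the two spanning lines, which rests on $d_1\neq d_2$; everything else is the standard transport-of-fibers argument.
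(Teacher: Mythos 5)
Your proposal is correct and is essentially the paper's own argument: the paper also trivializes over $\mc P^2\setminus l$ and its coordinate formula $[x:y:z]\mapsto[x+sz:y+tz:z]$ is exactly the affine translation $\varphi_I$ carrying $I^0=[0:0:1]$ to $I=[s:t:1]$ that you use. The only difference is presentational — the paper first describes the map as a perspectivity from $Q=l\cap II^0$ and must then check continuity across a singular locus, whereas you define the global projective automorphism directly and avoid that issue.
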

\begin{proof}
Choose a line $l\subset \mc P^2\setminus\{I^0\}$ and the neighborhood $\mathcal{U}_l=\mc P^2\setminus l$ of $I^0$.
For a point $I$ in this neighborhood and a Pappus' configuration $(A_{1}^0,B_{1}^0,C_{1}^0,A_{2}^0,B_{2}^0,C_{2}^0)$ on two lines $d_{1}^0,d_{2}^0$ containing $I^0$, construct lines $d_1,d_2$ and the configuration $(A_1,B_1,C_1,A_2,B_2,C_2)$ as follows: consider the points $D_i=l\cap d_{i}^0$ and $Q=l\cap II^0$ and define $d_i=ID_i,\,A_i=d_i\cap QA_{i}^0$ and in the same way $B_i,C_i\,(i=1,2)$. We describe this construction using coordinates to show that the map
$$(A_{1}^0,B_{1}^0,C_{1}^0,A_{2}^0,B_{2}^0,C_{2}^0)\mapsto(A_1,B_1,C_1,A_2,B_2,C_2)$$ has a continuous extension on the singular locus $(d_{1}^0\cup d_{2}^0)\setminus l$. Choose a projective frame such that $I^0=[0:0:1],\, l:X_2=0$. If $I=[s:t:1]$ and $A_{i}^0=[n_i:-m_i:a_i],\,B_{i}^0=[n_i:-m_i:b_i],\,C_{i}^0=[n_i:-m_i:c_i]$ ($a_i,b_i,c_i$ and $0$ are distinct and also $m_1n_2\neq m_2n_1$), then we define $A_{i}=[n_i+sa_i:-m_i+ta_i:a_i],\,B_{i}=[n_i+sb_i:-m_i+tb_i:b_i],\,C_{i}=[n_i+sc_i:-m_i+tc_i:c_i]\,(i=1,2)$, and these formulae agree with the geometrical construction given for nondegenerate positions of $I\in \mc P^2\setminus (d_1\cup d_2\cup l)$. The trivialization over $\mathcal{U}_l$ is given by
$$\phi:\mathcal{U}_l\times \mathcal{P}_{I^0}\rightarrow\Gamma^{-1}(\mathcal{P}_{I^0})$$
$$\phi(I,(A_{1}^0,B_{1}^0,C_{1}^0,A_{2}^0,B_{2}^0,C_{2}^0))=(A_1,B_1,C_1,A_2,B_2,C_2).$$
\end{proof}
\begin{lemma}\label{p2}
The projection $$\Lambda:\mathcal{P}_{I}\rightarrow\mathcal{F}_{2}(\mc P^1),\,(A_1,B_1,C_1,A_2,B_2,C_2)\mapsto(d_1,d_2)$$ is a locally trivial fibration with fiber $\mathcal{F}_3(\mc )\times\mathcal{F}_{3}(\mc )$.
\end{lemma}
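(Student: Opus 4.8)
The plan is to identify the base and the fibre geometrically and then to build explicit local trivializations by a continuously varying family of perspectivities, exactly in the spirit of Proposition~\ref{pr:1} and Lemma~\ref{l.7}. First I would observe that the lines of $\mc P^2$ through the fixed point $I$ form a pencil, i.e. a projective line $\check I\approx\mc P^1$; since the six points of a configuration in $\mathcal{P}_I$ span $\mc P^2$, the two supporting lines $d_1,d_2$ are necessarily distinct, so $(d_1,d_2)$ is a genuine point of $\mathcal{F}_2(\check I)\approx\mathcal{F}_2(\mc P^1)$ and $\Lambda$ is well defined. To identify the fibre, fix a pair $(d_1,d_2)$: a configuration over it amounts to the independent choice of three distinct points $A_1,B_1,C_1$ on $d_1$ and three distinct points $A_2,B_2,C_2$ on $d_2$, all different from $I$. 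Because $d_i\setminus\{I\}\approx\mc$ and the only common point of $d_1,d_2$ is the forbidden point $I$, the two triples are automatically mutually distinct, whence $\Lambda^{-1}(d_1,d_2)\approx\mathcal{F}_3(d_1\setminus\{I\})\times\mathcal{F}_3(d_2\setminus\{I\})\approx\mathcal{F}_3(\mc)\times\mathcal{F}_3(\mc)$.

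For the local triviality I would fix a base point $(d_1^0,d_2^0)$ and choose auxiliary centres $O_1,O_2\in\mc P^2$ with $O_j\notin d_j^0$ (in particular $O_j\neq I$, since $I\in d_j^0$). For a line $d$ through $I$ with $O_j\notin d$, the perspectivity with centre $O_j$,
$$\varphi_{d}^{(j)}:d_j^0\longrightarrow d,\qquad x\longmapsto (O_j\vee x)\cap d,$$
is a projective isomorphism depending algebraically on $d$, and — this is the decisive point — it fixes $I$, because $I$ is the common point of $d_j^0$ and $d$ and lies on the line $O_j\vee I$. Hence $\varphi_d^{(j)}$ restricts to a homeomorphism $d_j^0\setminus\{I\}\to d\setminus\{I\}$. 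On the neighbourhood $\mathcal{U}=\{(d_1,d_2)\in\mathcal{F}_2(\check I)\mid O_1\notin d_1,\ O_2\notin d_2\}$ of $(d_1^0,d_2^0)$ I would then set
$$f:\Lambda^{-1}(\mathcal{U})\longrightarrow\mathcal{U}\times\mathcal{F}_3(d_1^0\setminus\{I\})\times\mathcal{F}_3(d_2^0\setminus\{I\}),$$
$$(A_1,B_1,C_1,A_2,B_2,C_2)\longmapsto\Big((d_1,d_2),(\varphi_{d_1}^{(1)})^{-1}(A_1,B_1,C_1),(\varphi_{d_2}^{(2)})^{-1}(A_2,B_2,C_2)\Big),$$
with inverse obtained by applying $\varphi_{d_1}^{(1)}$ and $\varphi_{d_2}^{(2)}$ to the two reference triples. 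By construction $pr_1\circ f=\Lambda$, and after a fixed identification $d_j^0\setminus\{I\}\approx\mc$ the two factors become $\mathcal{F}_3(\mc)$.

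The routine points are the continuity of $f$ and of its inverse, which follow from the algebraic dependence of $\varphi_d^{(j)}$ on $d$ and of $d=\Lambda(\cdots)$ on the configuration. The one step that genuinely needs attention — the analogue of the role played by the fixed extra points $x_{k+2}^0,\ldots$ in Lemmas~\ref{l.7} and \ref{l.6.5} — is ensuring that the chosen centres keep the perspectivities nondegenerate on a full neighbourhood: one checks that $O_j\notin d_j$ for all $(d_1,d_2)$ in a neighbourhood of $(d_1^0,d_2^0)$, which holds because $O_j\notin d_j^0$ is an open condition on the pencil, and that $\varphi_d^{(j)}$ never collapses two chosen points onto $I$, which is automatic since it fixes $I$ and is injective. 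This is precisely the mechanism of the earlier fibrations, so once the perspectivity family is in place the remaining verifications are mechanical.
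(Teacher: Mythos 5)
Your proposal is correct and follows essentially the same route as the paper: the paper also builds the local trivialization from perspectivities centred at an auxiliary point off the base lines (it uses a single centre $Q\in\mc P^2\setminus(d_1^0\cup d_2^0)$ for both lines, where you use one centre per line, an inessential difference), and relies on the same key observation that these perspectivities fix $I$, so the triples stay in $d_i\setminus\{I\}\approx\mc$. Your additional remarks identifying the fibre and the base pencil are correct and merely make explicit what the paper leaves implicit.
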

\begin{proof}
Choose a point $Q$ in $\mc P^2\setminus(d_{1}^0\cup d_{2}^0)$ and the neighborhood $\mathcal{U}_Q=\{(d_1,d_2)\in\mathcal{F}_{2}(\mc P^1)\big|Q\notin d_1\cup d_2\}$. The trivialization over $\mathcal{U}_Q$ is given by
$$\psi:\mathcal{U}_Q\times \mathcal{F}_{3}(d_{1}^0)\times \mathcal{F}_{3}(d_{2}^0)\rightarrow \Lambda^{-1}(\mathcal{U}_Q)$$
$$\psi\big((d_1,d_2),(A_{1}^0,B_{1}^0,C_{1}^0),(A_{2}^0,B_{2}^0,C_{2}^0)\big)=(A_1,B_1,C_1,A_2,B_2,C_2),$$
where $A_i=d_i\cap QA_{i}^0$ and similarly for $B_i,C_i\,(i=1,2)$. Obviously, $A_i,B_i,C_i$ and $I$ are four distinct points on $d_i$.
\end{proof}
Choose in $\mathcal{P}_{I=[0:0:1]}$ the base point $p_0=(A_{1}^0,B_{1}^0,C_{1}^0,A_{2}^0,B_{2}^0,C_{2}^0)$ where
$A_{1}^0=[1:0:1],B_{1}^0=[2:0:1],C_{1}^0=[3:0:1],A_{2}^0=[0:1:1],B_{2}^0=[0:2:1],C_{2}^0=[0:3:1]$.
\begin{proposition}
The fundamental group of $\mathcal{P}_I$ is given by
$$\pi_1(\mathcal{P}_I,p_0)\cong\mathcal{PB}_3\times\mathbb{F}_2.$$
\end{proposition}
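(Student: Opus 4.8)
The plan is to apply, exactly as announced, the homotopy exact sequence of the locally trivial fibration
$$\Lambda:\mathcal{P}_I\rightarrow\mathcal{F}_2(\mc P^1)$$
of Lemma \ref{p2}, whose fiber is $\mathcal{F}_3(\mc)\times\mathcal{F}_3(\mc)$. First I would pin down the homotopy of the base: projecting onto the first line gives a fibration $\mathcal{F}_2(\mc P^1)\rightarrow\mc P^1$ with contractible fiber $\mc P^1\setminus\{pt\}\approx\mc$, so $\mathcal{F}_2(\mc P^1)\simeq\mc P^1\approx S^2$ and hence $\pi_1(\mathcal{F}_2(\mc P^1))=1$, $\pi_2(\mathcal{F}_2(\mc P^1))\cong\mathbb{Z}$. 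On the fiber side $\pi_1(\mathcal{F}_3(\mc)\times\mathcal{F}_3(\mc))\cong\mathcal{PB}_3\times\mathcal{PB}_3$. Since $\pi_1$ of the base vanishes, the relevant tail
$$\mathbb{Z}\cong\pi_2(\mathcal{F}_2(\mc P^1))\mathop{\longrightarrow}\limits^{\partial}\mathcal{PB}_3\times\mathcal{PB}_3\longrightarrow\pi_1(\mathcal{P}_I)\longrightarrow1$$
yields $\pi_1(\mathcal{P}_I)\cong(\mathcal{PB}_3\times\mathcal{PB}_3)\big/\big\langle\hbox{Im}\,\partial\big\rangle$, in complete analogy with the proofs of Theorems \ref{th:1} and \ref{th:2}.

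The heart of the argument, and the step I expect to be the main obstacle, is to identify $\hbox{Im}\,\partial$. I would represent a generator of $\pi_2(\mathcal{F}_2(\mc P^1))$ by a relative map $g:(D^2,S^1)\rightarrow(\mathcal{F}_2(\mc P^1),(d_1^0,d_2^0))$ in which the first line $d_1$ sweeps the whole pencil of lines through $I$ by a degree one map onto $\mc P^1$, while the second line is carried along by a fixed-point-free involution so as to remain distinct from $d_1$. Lifting $g$ to $\mathcal{P}_I$ amounts to placing three marked points on each moving line, continuously and agreeing with $p_0$ on $S^1$, and $\partial[g]$ is the boundary loop $\tilde g\big|_{S^1}$ in the fiber. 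Over the boundary each line returns to its initial position, but continuity of the lift over the whole disk forces the three marked points on that line to rotate exactly once around the fixed point $I$ (which becomes the point at infinity once $d_i\setminus\{I\}$ is identified with $\mc$). By Lemma \ref{l.6} with $k=3$, each coordinate of this boundary loop is therefore the full twist $D_3=\Delta_3^2=\a_{12}\a_{13}\a_{23}$, up to the orientation with which the line sweeps the pencil. Hence $\partial$ sends the generator to a central element $(D_3,D_3^{\pm1})\in\mathcal{PB}_3\times\mathcal{PB}_3$, so that $\hbox{Im}\,\partial=\langle(D_3,D_3^{\pm1})\rangle$; the difficulty is precisely in justifying that \emph{both} factors receive a nontrivial full twist.

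Finally I would conclude with a short group-theoretic computation. The full twist $D_3$ generates the infinite cyclic centre of $\mathcal{PB}_3$, and $\mathcal{PB}_3/\langle D_3\rangle\cong\mathbb{F}_2$, so $\mathcal{PB}_3\cong\mathbb{F}_2\times\mathbb{Z}$ with the $\mathbb{Z}$ factor central. Because $(D_3,D_3^{\pm1})$ is central, its normal closure is just the cyclic subgroup it generates, and
$$\pi_1(\mathcal{P}_I)\cong\frac{\mathcal{PB}_3\times\mathcal{PB}_3}{\langle(D_3,D_3^{\pm1})\rangle}\cong\frac{(\mathbb{F}_2\times\mathbb{Z})\times(\mathbb{F}_2\times\mathbb{Z})}{\langle(0,1,0,\pm1)\rangle}\cong\mathbb{F}_2\times\mathbb{F}_2\times\mathbb{Z},$$
since $\mathbb{Z}^2$ modulo a diagonal (or antidiagonal) line is again $\mathbb{Z}$; in particular the sign ambiguity above is harmless. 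Regrouping one $\mathbb{Z}$ with one free factor identifies this with $(\mathbb{F}_2\times\mathbb{Z})\times\mathbb{F}_2=\mathcal{PB}_3\times\mathbb{F}_2$, which is the desired conclusion.
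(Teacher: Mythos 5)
Your proposal is correct and follows essentially the same route as the paper: the homotopy exact sequence of the fibration $\Lambda:\mathcal{P}_I\to\mathcal{F}_2(\mc P^1)$ of Lemma \ref{p2}, identification of $\hbox{Im}\,\delta_*$ as the central element $(D_3,D_3^{\pm1})$ by lifting the generator of $\pi_2(\mathcal{F}_2(\mc P^1))\cong\mathbb{Z}$ and invoking Lemma \ref{l.6} (the paper's explicit lift, whose second line is parametrized with $\overline{z}$, fixes the sign as $(D_3,(D'_3)^{-1})$, but as you observe the sign is immaterial). The only cosmetic difference is the final algebra: the paper eliminates $\b'_{23}$ from the presentation using the relation $\b'_{12}\b'_{13}\b'_{23}=\b_{12}\b_{13}\b_{23}$, whereas you use $\mathcal{PB}_3\cong\mathbb{F}_2\times\mathbb{Z}$ and quotient $\mathbb{Z}^2$ by a diagonal; both correctly yield $\mathcal{PB}_3\times\mathbb{F}_2$.
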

\begin{proof}
Lemma \ref{p2} gives the exact sequence
$$\ldots\rightarrow\pi_{2}(\mathcal{F}_{2}(\mc P^1))\mathop{\longrightarrow}\limits^{\delta_*}\pi_1(\mathcal{F}_3(\mc )\times\mathcal{F}_{3}(\mc ))\longrightarrow\pi_1(\mathcal{P_I})\longrightarrow1,$$
where the first group is cyclic generated by the homotopy class of the map
$$\l:(D^2,S^1)\rightarrow\mathcal{F}_{2}(\mc P^1),\,z\mapsto\big([1-|z|:z],[\overline{z}:|z|-1]\big)$$
(that means the two lines through $I=[0:0:1]$ are $d_1(z):(1-|z|)X_0+zX_1=0$ and $d_2(z):\overline{z}X_0+(|z|-1)X_1=0$). The second group is generated by $\{\a_{ij},\a'_{ij}\}_{1\leq i<j\leq3}$ where
\begin{center}
\begin{picture}(360,100)
\thicklines
\put(30,65){\line(1,0){70}}
\put(30,3){\line(1,0){70}}
\put(42,22){\line(0,1){43}}
\put(45,49){\line(1,1){16}}
\put(27,33){\line(1,1){12}}
\put(57,3){\line(-1,1){30}}
\put(80,3){\line(0,1){62}}
\put(42,3){\line(0,1){10}}

\put(10,30){$\a_{12}$}

\put(35,75){$A_1^0$}
\put(55,75){$B_1^0$}
\put(78,75){$C_1^0$}

\put(130,65){\line(1,0){70}}
\put(130,5){\line(1,0){70}}
\put(149,30){\line(0,1){35}}
\put(168,56){\line(2,1){17}}
\put(152,48){\line(2,1){10}}
\put(130,37){\line(2,1){15}}

\put(149,5){\line(0,1){15}}
\put(129,22){\line(1,1){4}}
\put(160,21){\line(-2,1){30}}
\put(190,5){\line(-2,1){23}}
\put(165,5){\line(0,1){60}}
\put(110,33){$\a_{13}$}

\put(135,75){$A_1^0$}
\put(155,75){$B_1^0$}
\put(178,75){$C_1^0$}

\put(250,65){\line(1,0){70}}
\put(250,3){\line(1,0){70}}
\put(285,22){\line(0,1){43}}
\put(289,49){\line(1,1){16}}
\put(269,33){\line(1,1){10}}
\put(299,3){\line(-1,1){30}}
\put(260,3){\line(0,1){62}}
\put(284,3){\line(0,1){10}}
\put(235,33){$\a_{23}$}

\put(255,75){$A_1^0$}
\put(275,75){$B_1^0$}
\put(300,75){$C_1^0$}

\end{picture}
\end{center}

\noindent the points moving on the line $d_{1}^{0}:X_1=0$, respectively
\begin{center}
\begin{picture}(360,100)
\thicklines
\put(30,65){\line(1,0){70}}
\put(30,3){\line(1,0){70}}
\put(42,22){\line(0,1){43}}
\put(45,49){\line(1,1){16}}
\put(27,33){\line(1,1){12}}
\put(57,3){\line(-1,1){30}}
\put(80,3){\line(0,1){62}}
\put(42,3){\line(0,1){10}}

\put(10,30){$\a'_{12}$}

\put(35,75){$A_2^0$}
\put(55,75){$B_2^0$}
\put(78,75){$C_2^0$}

\put(130,65){\line(1,0){70}}
\put(130,5){\line(1,0){70}}
\put(149,30){\line(0,1){35}}
\put(168,56){\line(2,1){17}}
\put(152,48){\line(2,1){10}}
\put(130,37){\line(2,1){15}}

\put(149,5){\line(0,1){15}}
\put(129,22){\line(1,1){4}}
\put(160,21){\line(-2,1){30}}
\put(190,5){\line(-2,1){23}}
\put(165,5){\line(0,1){60}}
\put(110,33){$\a'_{13}$}

\put(135,75){$A_2^0$}
\put(155,75){$B_2^0$}
\put(178,75){$C_2^0$}

\put(250,65){\line(1,0){70}}
\put(250,3){\line(1,0){70}}
\put(285,22){\line(0,1){43}}
\put(289,49){\line(1,1){16}}
\put(269,33){\line(1,1){10}}
\put(299,3){\line(-1,1){30}}
\put(260,3){\line(0,1){62}}
\put(284,3){\line(0,1){10}}
\put(235,33){$\a'_{23}$}

\put(255,75){$A_2^0$}
\put(275,75){$B_2^0$}
\put(300,75){$C_2^0$}

\end{picture}
\end{center}

\noindent the points moving on the line $d_{2}^{0}:X_0=0$ and it has the presentation
$$\pi_1(\mathcal{F}_3(\mc )\times\mathcal{F}_{3}(\mc ))\cong\big<\a_{ij},\a'_{ij}, 1\leq i<j\leq3\big|(YB3)_{\a_{ij}},(YB3)_{\a'_{ij}},[\a_{ij},\a'_{pq}]=1\big>.$$
We choose the lift $\widetilde{\lambda}:(D^2,S^1)\rightarrow\big(\mathcal{P}_I,\mathcal{F}_3(d_1^0\setminus I^0)\times\mathcal{F}_3(d_2^0\setminus I^0)\big),$

$z\mapsto\big([z:|z|-1:1],[z:|z|-1:\frac{1}{2}],[z:|z|-1:\frac{1}{3}],[1-|z|:\overline{z}:1],[1-|z|:
\overline{z}:\frac{1}{2}],[1-|z|:\overline{z}:\frac{1}{3}]\big)$, hence $Im \delta_*$ is generated by the class of the map
$$\widetilde{\l}\big|_{S^1}:S^1\rightarrow\mathcal{F}_3(d_1^0\setminus I^0)\times\mathcal{F}_3(d_2^0\setminus I^0),\,z\mapsto\big((z,2z,3z),(\overline{z},2\overline{z},3\overline{z})\big),$$
and this is $(D_3,(D'_3)^{-1})$ (see lemma \ref{l.6}). The group $\pi_{1}(\mathcal{P}_I,p_0)$ is generated by $\b_{ij},\b'_{pq}$, the images of $\a_{ij},\a'_{pq}$, with relations $(YB3)_{\b_{ij}},(YB3)_{\b'_{ij}}$, $[\b_{ij},\b'_{pq}]=1$ and $\b'_{12}\b'_{13}\b'_{23}=\b_{12}\b_{13}\b_{23}$. $\b'_{23}$ can be eliminated and we find the relations $(YB3)_{\b_{ij}}, [\b_{ij},\b'_{pq}]$, and this gives the presentation of the direct product $\mathcal{PB}_3\times \mathbb{F}_2$.
\end{proof}
\begin{remark}
The generator $\b_{12}$ of $\pi_{1}(\mathcal{P}_I)$ is given by the loop
\begin{center}
\begin{picture}(360,210)
\thicklines
\put(150,105){\line(2,1){100}}
\put(150,105){\line(-1,1){80}}

\put(150,15){\line(2,1){100}}
\put(150,15){\line(-1,1){80}}

\put(150,15){\line(0,1){91}}\put(150,115){$I$}\put(149,103){$\bullet$}\put(147,2){$I$}\put(148,12){$\bullet$}
\put(125,40){\line(0,1){90}}\put(125,135){$A_2^0$}\put(124,127){$\bullet$}\put(112,25){$A_2^0$}\put(123,37){$\bullet$}
\put(100,66){\line(0,1){90}}\put(103,157){$B_2^0$}\put(99,152){$\bullet$}\put(86,51){$B_2^0$}\put(98,62){$\bullet$}
\put(80,86){\line(0,1){90}}\put(83,177){$C_2^0$}\put(79,172){$\bullet$}\put(63,71){$C_2^0$}\put(78,83){$\bullet$}

\put(230,56){\line(0,1){90}}
\put(185,72){\line(0,1){52}}
\put(188,102){\line(1,2){15}}
\put(170,75){\line(1,2){10}}
\put(170,75){\line(2,-1){47}}
\put(185,32){\line(0,1){29}}

\put(225,155){$C_1^0$}\put(227,142){$\bullet$}\put(228,45){$C_1^0$}\put(228,53){$\bullet$}
\put(197,142){$B_1^0$}\put(199,130){$\bullet$}\put(207,36){$B_1^0$}\put(217,47){$\bullet$}
\put(174,131){$A_1^0$}\put(182,120){$\bullet$}\put(180,20){$A_1^0$}\put(183,30){$\bullet$}
\end{picture}
\end{center}
and there are similar pictures for $\b_{13},\b_{23},\b'_{12},\b'_{13}$.
\end{remark}
\noindent\textbf{Proof of Theorem\ref{th:3}}. Lemma \ref{p1} gives the exact sequence
$$\ldots\longrightarrow\pi_{2}(\mc P^2)\mathop{\longrightarrow}\limits^{\delta_*}\pi_1(\mathcal{P}_I)\longrightarrow\pi_1(\mathcal{P})\longrightarrow1,$$
where the first group is cyclic generated by the homotopy class of the map
$$\g:(D^2,S^1)\rightarrow(\mc P^2,[0:0:1]),\,z\mapsto[0:1-|z|:z].$$
We choose the lift $\widetilde{\gamma}:(D^2,S^1)\rightarrow(\mathcal{P},\mathcal{P}_I)$,\,\,\,$\widetilde{\gamma}(z)=\big([1:0:1],[2:|z|-1:2-z],[3:2|z|-2:3-2z],[0:1-|z|+\overline{z}:|z|-1+z],[0:1-|z|+2\overline{z}:2|z|-2+z],[0:1-|z|+3\overline{z}:3|z|-3+z]\big)$, hence $Im \delta_*$ is generated by the class of the map $\widetilde{\g}\big|_{S^1}:S^1\rightarrow \mathcal{P}_I$, $z\mapsto\big([1:0:1],[\frac{2}{2-z}:0:1],[\frac{3}{3-2z}:0:1],[0:\overline{z}^2:1],[0:2\overline{z}^2:1],[0:3\overline{z}^2:1]\big)$; the  images of the loops $z\mapsto\frac{2}{2-z},\,z\mapsto\frac{3}{3-z}$ are two cirles around the constant loop $z\mapsto1$ and the first circle is included in the second one. As in Lemma \ref{l.6} we find  $\delta_*[\widetilde{\g}]=(\b_{12}\b_{13}\b_{23})(\b'_{12}\b'_{13}\b'_{23})^{-2}=(\b_{12}\b_{13}\b_{23})^{-1}$, therefore $\b_{23}$ can be eliminated and this gives the presentation of the direct product $\mathbb{F}_2\times \mathbb{F}_2$ (the generators are $\b_{12},\b_{13}$ and $\b'_{12},\b'_{13}$).\hspace{6.2cm}$\square$\\

Define the {\em{space of Pappus' configurations}} (in $\mc P^{n}$) by\\
$\mathcal{P}^{(n)}=\big\{(A_1,B_1,C_1,A_2,B_2,C_2)\in\mathcal{F}_{6}^{2,n}\big|(A_i,B_i,C_i)\in\mathcal{F}_{3}^{1,n},A_i,B_i,C_i\neq I\big\}$\\
(here $I=d_1\cap d_2$, where $d_i$ is the line containing $A_i,B_i,C_i,\,\,i=1,2$).
\begin{theorem}
$\pi_1(\mathcal{P}^{(n)})\cong\pi_1(\mathcal{P})\cong\mathbb{F}_2\times\mathbb{F}_2$.
\end{theorem}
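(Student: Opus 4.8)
The plan is to reproduce, in $\mc P^n$, the two fibrations used for $\mathcal{P}=\mathcal{P}^{(2)}$ and to run the same two homotopy exact sequences, checking that the extra room available for $n\geq 3$ changes only the bookkeeping, not the final group. First I would generalise Lemma~\ref{p1} and Lemma~\ref{p2}: the intersection map $\Gamma^{(n)}:\mathcal{P}^{(n)}\to\mc P^n$, $(A_1,\dots,C_2)\mapsto I=d_1\cap d_2$, is a locally trivial fibration with fibre $\mathcal{P}^{(n)}_I$ (the configurations with a prescribed intersection point), and the pair-of-lines map $\Lambda^{(n)}:\mathcal{P}^{(n)}_I\to\mathcal{F}_2(\mc P^{n-1})$, $(A_1,\dots,C_2)\mapsto(d_1,d_2)$, is a locally trivial fibration with fibre $\mathcal{F}_3(\mc)\times\mathcal{F}_3(\mc)$; here the pencil of lines through the fixed point $I$ is identified with $\mc P^{n-1}$, so pairs of distinct such lines form $\mathcal{F}_2(\mc P^{n-1})$. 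Local triviality is obtained exactly as in Lemma~\ref{p1}, Lemma~\ref{p2} and Proposition~\ref{pr:1}, by projecting from an auxiliary point (respectively a suitably chosen complementary subspace).

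Next I would compute $\pi_1(\mathcal{P}^{(n)}_I)$ from $\Lambda^{(n)}$. Since $n-1\geq 2$, the base $\mathcal{F}_2(\mc P^{n-1})$ is simply connected, and the fibration $\mathcal{F}_2(\mc P^{n-1})\to\mc P^{n-1}$ with fibre $\mc P^{n-1}\setminus\{pt\}\simeq\mc P^{n-2}$ shows $\pi_2(\mathcal{F}_2(\mc P^{n-1}))\cong\mathbb{Z}^2$, in contrast with the rank-one group $\pi_2(\mathcal{F}_2(\mc P^1))$ used for $\mathcal{P}$. The exact sequence gives $\pi_1(\mathcal{P}^{(n)}_I)\cong(\mathcal{PB}_3\times\mathcal{PB}_3)/\langle\langle\,\mathrm{Im}\,\delta_*\,\rangle\rangle$. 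The two generators of $\pi_2(\mathcal{F}_2(\mc P^{n-1}))$ can be taken to rotate one line over a projective $\mc P^1\subset\mc P^{n-1}$ while the other is held fixed; restricting such a rotation to the plane it spans reduces the boundary computation to the planar full twist of Lemma~\ref{l.6}, so $\delta_*$ sends them to $(D_3,1)$ and $(1,D'_3)$ up to sign. Killing the two central full twists and using $\mathcal{PB}_3/\langle\langle D_3\rangle\rangle\cong\mathbb{F}_2$ (cf. Example~\ref{ex:1}) yields $\pi_1(\mathcal{P}^{(n)}_I)\cong\mathbb{F}_2\times\mathbb{F}_2$, whereas for $n=2$ only the single class $(D_3,(D'_3)^{-1})$ is killed, leaving $\mathcal{PB}_3\times\mathbb{F}_2\cong\mathbb{F}_2\times\mathbb{Z}\times\mathbb{F}_2$.

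Finally I would feed this into the exact sequence of $\Gamma^{(n)}$: with $\pi_1(\mc P^n)=1$ and $\pi_2(\mc P^n)\cong\mathbb{Z}$ one gets $\pi_1(\mathcal{P}^{(n)})\cong\pi_1(\mathcal{P}^{(n)}_I)/\langle\langle\,\mathrm{Im}\,\delta_*\,\rangle\rangle$. The generator of $\pi_2(\mc P^n)$ is supported on a $\mc P^1$, so lifting it to a disc of Pappus configurations and restricting to the boundary reduces, exactly as in the proof of Theorem~\ref{th:3}, to the word $(\b_{12}\b_{13}\b_{23})(\b'_{12}\b'_{13}\b'_{23})^{-2}$, a product of the two central full twists $D_3,D'_3$. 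For $n\geq 3$ these are already trivial in $\pi_1(\mathcal{P}^{(n)}_I)$ by the previous step, so $\delta_*=0$ and $\pi_1(\mathcal{P}^{(n)})\cong\mathbb{F}_2\times\mathbb{F}_2$; for $n=2$ the same word kills the remaining central $\mathbb{Z}$, again giving $\mathbb{F}_2\times\mathbb{F}_2$. Either way $\pi_1(\mathcal{P}^{(n)})\cong\pi_1(\mathcal{P})\cong\mathbb{F}_2\times\mathbb{F}_2$.

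The main obstacle is the boundary-map bookkeeping for $n\geq 3$: I must verify that the two generators of the enlarged group $\pi_2(\mathcal{F}_2(\mc P^{n-1}))\cong\mathbb{Z}^2$ map precisely to the two independent central twists $(D_3,1)$ and $(1,D'_3)$, and that no further relations intervene. The clean way to do this is to realise each $\pi_2$-generator inside a single projective plane through $I$ (a pencil of lines), thereby reducing every monodromy computation to the already-established planar statement of Lemma~\ref{l.6}; the local-triviality verifications for $\Gamma^{(n)}$ and $\Lambda^{(n)}$ are routine adaptations of Lemmas~\ref{p1} and~\ref{p2} and present no real difficulty.
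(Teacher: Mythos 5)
Your argument is correct, but it takes a genuinely different route from the paper. The paper fibres $\mathcal{P}^{(n)}$ over $Gr^2(\mc P^n)$ (the plane spanned by the configuration) with fibre $\mathcal{P}^{(2)}=\mathcal{P}$, reduces to $n=3$ by stability of $\pi_2$ of Grassmannians as in Lemma~\ref{l.2}, and then only has to check that the single boundary class $\delta_*[\widetilde{\mu}|_{S^1}]=\b_{12}\b_{13}\b_{23}$ is already trivial in the previously computed group $\pi_1(\mathcal{P})\cong\mathbb{F}_2\times\mathbb{F}_2$; this reuses Theorem~\ref{th:3} wholesale and requires one short computation. You instead rerun the two-step tower of Lemmas~\ref{p1} and~\ref{p2} intrinsically in $\mc P^n$, which forces you to handle the larger group $\pi_2(\mathcal{F}_2(\mc P^{n-1}))\cong\mathbb{Z}^2$ and to evaluate $\delta_*$ on both generators; your identification of the images as $(D_3^{\pm1},1)$ and $(1,(D_3')^{\pm1})$ is right (each generator can be supported on a pencil inside a single plane through $I$ avoiding the other line, reducing to Lemma~\ref{l.6}), and since $D_3$ generates the centre of $\mathcal{PB}_3\cong\mathbb{F}_2\times\mathbb{Z}$ the quotient is indeed $\mathbb{F}_2\times\mathbb{F}_2$. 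Your version costs more bookkeeping but buys a finer statement the paper does not record, namely $\pi_1(\mathcal{P}^{(n)}_I)\cong\mathbb{F}_2\times\mathbb{F}_2$ already at the fixed-intersection-point stage for $n\geq3$ (both full twists die there, so the final boundary map over $\mc P^n$ is automatically zero), and it is consistent with the $n=2$ case where the single class $(D_3,(D_3')^{-1})=e_1-e_2$ leaves one central $\mathbb{Z}$ to be killed by the second fibration. The only points needing care, as you note, are the local-triviality checks for $\Gamma^{(n)}$ and $\Lambda^{(n)}$ (projections from a point must be replaced by constructions inside the plane spanned by the two lines, or by intersecting with a fixed hyperplane off $I$), but these are routine adaptations of Lemmas~\ref{p1}, \ref{p2} and Proposition~\ref{pr:1}.
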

\begin{proof}
Using the fibration (the proof is like in Proposition \ref{pr:1})
$$\mathcal{P}=\mathcal{P}^{(2)}\longrightarrow\mathcal{P}^{(n)}\longrightarrow Gr^2(\mc P^n)$$
first we find that $\pi_1(\mathcal{P}^{(3)})\cong\pi_1(\mathcal{P}^{(n)})$ for $n\geq3$ (like in Lemma \ref{l.2}) and secondly, in the fibration
$\mathcal{P}\longrightarrow\mathcal{P}^3\longrightarrow Gr^2(\mc P^3)$, the boundary morphism $\pi_2(Gr^2(\mc P^3))\rightarrow \pi_1(\mathcal{P})$ is trivial: take as generator of $\pi_{2}(Gr^2(\mc P^2))$ the class of $\mu:(D^2,S^1)\rightarrow \big(Gr^2(\mc P^3),H_1\big),\,z\mapsto H_z:(1-|z|)X_0+zX_3=0$, and choose the lift $\widetilde{\mu}:(D^2,S^1)\rightarrow(\mathcal{P}^{(3)},\mathcal{P}),\,z\mapsto\big([z:0:1:|z|-1],[2z:0:1:2|z|-2],[3z:0:1:3|z|-3],[0:1:1:0],[0:2:1:0],[0:3:1:0]\big)$, and $\delta_{*}[\widetilde{\mu}|_{S^1}]=\b_{12}\b_{13}\b_{23}=1$.

\end{proof}
\section{appendix}
In this Appendix we recollect different formulae in braid groups involving the braids $\s_1\ldots\s_k,\,\,\,\s_k\ldots\s_1$ (consecutive factors) and Garside braid $\Delta_n$ and also direct proofs of these relations scattered in the literature ([B2,G,M1]).
\begin{lemma}\label{rmk.1}
\begin{enumerate}
  \item $(\s_1\ldots\s_k)\s_i=\s_{i+1}(\s_1\ldots\s_k)$ for $i=1,\ldots,k-1$ ;
  \item $(\s_k\ldots\s_{1})\s_i=\s_{i-1}(\s_k\ldots\s_1)$ for $i=2,\ldots,k$ ;
       \item$(\s_k\ldots\s_1\s_1\ldots\s_k)\s_j=\s_j(\s_k\ldots\s_1\s_1\ldots\s_k),\,\,\,\hbox{for}\,\,j=1,\ldots,k-1$ .
\end{enumerate}
\end{lemma}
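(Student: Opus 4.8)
The plan is to prove the three relations in turn and to obtain (3) as a purely formal consequence of (1) and (2). Throughout I use only the two families of Artin relations, $\s_i\s_j=\s_j\s_i$ for $|i-j|\geq2$ and $\s_i\s_{i+1}\s_i=\s_{i+1}\s_i\s_{i+1}$. For brevity set $w=\s_1\ldots\s_k$ and $\bar w=\s_k\ldots\s_1$.

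For (1), I would split $w$ around the index $i$ as $w=(\s_1\ldots\s_{i-2})(\s_{i-1}\s_i\s_{i+1})(\s_{i+2}\ldots\s_k)$, with the convention that factors carrying out-of-range indices are empty; this convention automatically absorbs the boundary cases $i=1$ and $i=k-1$. Multiplying on the right by $\s_i$, the generator $\s_i$ commutes past $\s_{i+2},\ldots,\s_k$ and reaches the central block, where the braid relation gives $\s_{i-1}\s_i\s_{i+1}\s_i=\s_{i-1}\s_{i+1}\s_i\s_{i+1}=\s_{i+1}\s_{i-1}\s_i\s_{i+1}$, the last equality using $[\s_{i-1},\s_{i+1}]=1$. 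Finally $\s_{i+1}$ commutes past the prefix $\s_1\ldots\s_{i-2}$ and emerges in front, which is exactly $\s_{i+1}w$.

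For (2), the quickest route is to apply the anti-automorphism of $\mathcal{B}_k$ that reverses every word and fixes each generator; it is well defined because both families of Artin relations are palindromic. It carries $w$ to $\bar w$ and sends the identity $w\s_i=\s_{i+1}w$ of (1) to $\s_i\bar w=\bar w\s_{i+1}$, i.e.\ $\bar w\s_{i+1}=\s_i\bar w$; reindexing yields $\bar w\s_i=\s_{i-1}\bar w$ for $i=2,\ldots,k$. (Alternatively one simply repeats the split-and-braid argument of (1) on the mirror word.) For (3), I would write the word as $\bar w\,w$ and push a generator $\s_j$, $1\leq j\leq k-1$, through from the right: by (1), $w\s_j=\s_{j+1}w$, and since $j+1\in\{2,\ldots,k\}$ relation (2) applies to give $\bar w\s_{j+1}=\s_j\bar w$, so $\bar w\,w\,\s_j=\bar w\,\s_{j+1}\,w=\s_j\,\bar w\,w$, which is the asserted commutation.

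The computations are entirely routine, so there is no genuine obstacle; the only points needing attention are the empty-product boundary indices in (1) and the verification that the index ranges in (3) keep both (1) and (2) simultaneously applicable, which is precisely why $j$ is restricted to $1,\ldots,k-1$ (the step $w\s_k=\s_{k+1}w$ would leave $\mathcal{B}_k$).
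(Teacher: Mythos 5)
Your proof is correct and follows the same route the paper intends: the paper's proof of Lemma \ref{rmk.1} consists of the single remark that the identities are ``direct consequences of braid relations,'' and your argument simply supplies the omitted computation (the split of $\s_1\ldots\s_k$ around index $i$, the reversal anti-automorphism for (2), and the formal derivation of (3) from (1) and (2)). No gaps; the boundary conventions and index ranges are handled properly.
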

\begin{proof}
These equalities are direct consequences of braid relations
$$\s_i\s_j=\s_j\s_i,\,\,\,\,\,\,\forall\,\,\,i,j=1,\ldots,n-1\,\,\,\hbox{with}\,\,|i-j|\geq 2$$
$$\s_i\s_{i+1}\s_i=\s_{i+1}\s_i\s_{i+1}\,\,\,\,\hbox{for}\,\,\,i=1,\ldots,n-2.$$
\end{proof}
In the next Lemma there are given some symmetric words in the diagram of Garside braid $\Delta_n$; the first one is the smallest in the length-lexicographic order, the last one is the greatest.
\begin{lemma}\label{l.1}
We have the following representations of the fundamental braid $\Delta_n$
$$
  \begin{array}{ll}
    \Delta_{n}&=\s_1(\s_2\s_1)\ldots(\s_{n-1}\ldots\s_1) \\
   &=(\s_1\ldots\s_{n-1})\ldots(\s_1\s_2)\s_1\\
  &=(\s_{n-1}\s_{n-2}\ldots\s_1)(\s_{n-1}\s_{n-2}\ldots\s_2)\ldots(\s_{n-1}\s_{n-2})\s_{n-1}\\
  &=\s_{n-1}(\s_{n-2}\s_{n-1})(\s_{n-3}\s_{n-2}\s_{n-1})\ldots(\s_1\ldots\s_{n-1}).
  \end{array}
$$
\end{lemma}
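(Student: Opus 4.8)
The plan is to establish all four representations by reducing the second, third and fourth to the first through repeated application of the commutation identities recorded in Lemma \ref{rmk.1}. I take the first expression $\Delta_n = \s_1(\s_2\s_1)\ldots(\s_{n-1}\ldots\s_1)$ as the definition and argue that each of the remaining three words equals it.

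First I would prove the equality of the first and second expressions. Both are built from the same collection of generators, and the idea is to push generators past one another using the relation $(\s_1\ldots\s_k)\s_i = \s_{i+1}(\s_1\ldots\s_k)$ from Lemma \ref{rmk.1}(1) (and its mirror (2)). Concretely, one reverses the order of the descending blocks: starting from $\s_1(\s_2\s_1)\cdots(\s_{n-1}\cdots\s_1)$, I slide the rightmost factors leftward so that the ascending blocks $(\s_1\ldots\s_{n-1})\cdots(\s_1\s_2)\s_1$ emerge. This is most cleanly handled by induction on $n$: assuming the first two forms of $\Delta_{n-1}$ agree, I append the block $\s_{n-1}\s_{n-2}\cdots\s_1$ on the appropriate side and use the commutation relations to rearrange. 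The third and fourth expressions are the ``mirror images'' obtained by the symmetry $\s_i \leftrightarrow \s_{n-i}$, which is an automorphism of the braid group compatible with the defining relations; thus I would derive them from the first two by applying this symmetry, checking that it carries $\Delta_n$ to itself (since $\Delta_n$ is the half-twist and is symmetric).

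The cleanest route is induction on $n$ throughout. For the base case $n=2$ all four expressions reduce to $\s_1$. For the inductive step I would use the recursion $\Delta_n = \Delta_{n-1}\cdot(\s_{n-1}\s_{n-2}\cdots\s_1) = (\s_1\s_2\cdots\s_{n-1})\cdot\Delta_{n-1}'$, where $\Delta_{n-1}'$ denotes $\Delta_{n-1}$ written in the generators $\s_2,\ldots,\s_{n-1}$ (shifted index). The commutation identity Lemma \ref{rmk.1}(1) is exactly what lets me move the new ascending block $\s_1\cdots\s_{n-1}$ from one side to the other while correctly reindexing the inductive expression, and this produces the transition between the first and second forms.

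I expect the main obstacle to be bookkeeping of indices rather than any conceptual difficulty: the commutation relations shift indices up or down by one as generators pass each other, so I must track carefully how each block of consecutive factors transforms under the sliding operation, and verify that the shifted copy of $\Delta_{n-1}$ inside $\Delta_n$ is the correct one. The symmetry argument deriving the third and fourth forms is conceptually immediate once the first two are in hand, but I would state explicitly that the involution $\s_i \mapsto \s_{n-i}$ preserves the braid relations and fixes $\Delta_n$, so that nothing is lost in invoking it.
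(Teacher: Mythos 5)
Your overall architecture has a structural problem, and one of your key formulas is wrong. The stated recursion $\Delta_n=(\s_1\s_2\cdots\s_{n-1})\cdot\Delta_{n-1}'$ with $\Delta_{n-1}'$ the \emph{shifted} copy of $\Delta_{n-1}$ fails already for $n=3$: it gives $\s_1\s_2\cdot\s_2=\s_1\s_2^2\neq\s_1\s_2\s_1$. The correct pairings are $\Delta_n=(\s_1\cdots\s_{n-1})\cdot\Delta_{n-1}$ with the \emph{unshifted} copy (this is the recursion hidden in the second word), and $\Delta_n=(\Sigma\Delta_{n-1})\cdot(\s_1\cdots\s_{n-1})$ with the shifted copy on the \emph{left} (the recursion hidden in the fourth word). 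This matters because the shift identities of Lemma \ref{rmk.1} move an ascending block $(\s_1\cdots\s_{n-1})$ through a word $w$ in $\s_1,\dots,\s_{n-2}$ only at the cost of replacing $w$ by $\Sigma w$; consequently the induction they support links the second word to the \emph{fourth} ($\Delta_n'=(\Sigma\Delta_{n-1}')(\s_1\cdots\s_{n-1})=\Delta_n'''$) and the third to the \emph{first}, exactly as the paper does. They do not produce the transition from the first form to the second form that your plan rests on, and you supply no other mechanism for it.

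The second issue is that your mirror-symmetry step is in danger of being circular. With $\Delta_n$ \emph{defined} by the first word, the involution $\tau:\s_i\mapsto\s_{n-i}$ sends the first word to the fourth word and the second to the third; so the assertion ``$\tau$ carries $\Delta_n$ to itself'' is literally the identity (first word) $=$ (fourth word), i.e.\ one of the equalities to be proved. It therefore cannot be quoted as a known property of $\Delta_n$ unless you justify it independently — either geometrically (invariance of the half-twist under reversing the strand order, which you gesture at but should make the load-bearing step) or algebraically via the Garside relation $\Delta_n\s_i=\s_{n-i}\Delta_n$, which is precisely the extra ingredient the paper invokes to close the loop between the two induction chains ($\Delta_n=\Delta_n(\hbox{first word})\Delta_n^{-1}=\Delta_n'''$). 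To repair your proof: run the two inductions the shift lemmas actually give (first $=$ third, second $=$ fourth), and then add the conjugation identity as the explicit bridge; alternatively, if you want first $=$ second directly, you must give a genuine sliding argument using only the far commutations $\s_i\s_j=\s_j\s_i$, $|i-j|\geq2$ (this is possible, but it is not what Lemma \ref{rmk.1} provides and it is not what you wrote).
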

\noindent\textit{Proof}.
Let us denote $\Delta_{n}',\Delta_{n}''$ and $\Delta_{n}'''$ the second, the third and the fourth word respectively. We will prove by induction
$\,\Delta_{n}'=\Delta_{n}''',\,\,\,\Delta_{n}''=\Delta_{n},\,\,\,\Delta'''_{n}=\Delta_{n}$. We will denote the {\em shift} of a word $\omega$ by $\Sigma\omega$ (for example, if $\omega=x_1x_2x_4x_3,\,\Sigma\omega=x_2x_3x_5x_4)$.

For $n=3,\,\,\Delta_3=\Delta'_3=\s_1\s_2\s_1=\s_2\s_1\s_2=\Delta_{3}'''=\Delta_{3}''$. Suppose that for $k<n$, we have $\Delta_{k}=\Delta_{k}'=\Delta_{k}''=\Delta_{k}'''$. Applying Lemma (\ref{rmk.1},(a)) on $(\s_1\ldots\s_{n-n})\ldots(\s_1\s_2)\s_1$
we get

$$
    \begin{array}{ll}
    \Delta_{n}'&= (\s_1\ldots\s_{n-1})\ldots(\s_1\s_2)\s_1\\
       &= (\s_2\ldots\s_{n-1})\ldots(\s_2\s_3)\s_2(\s_1\ldots\s_{n-1}) \\
       &=(\Sigma\Delta'_{n-1})(\s_1\ldots\s_{n-1})  \\
      & =(\Sigma\Delta'''_{n-1})(\s_1\ldots\s_{n-1})=\Delta'''_n.
    \end{array}
$$
 Applying Lemma (\ref{rmk.1},(b)) on $(\s_{n-1}\s_{n-2}\ldots\s_2)\ldots(\s_{n-1}\s_{n-2})\s_{n-1}$
we get
$$
\begin{array}{ll}
\Delta_{n}''& =(\s_{n-1}\s_{n-2}\ldots\s_1)(\s_{n-1}\s_{n-2}\ldots\s_2)\ldots(\s_{n-1}\s_{n-2})\s_{n-1} \\
      & = (\s_{n-2}\ldots\s_1)\ldots(\s_{n-2}\s_{n-3})\s_{n-2}(\s_{n-1}\ldots\s_1)\\
       & =\Delta''_{n-1}(\s_{n-1}\ldots\s_1)=\Delta_{n-1}(\s_{n-1}\ldots\s_1)=\Delta_n.
     \end{array}
$$
Finally, use the symmetry given by conjugation with $\Delta_n$ ($\Delta_n\s_i=\s_{n-i}\Delta_n$):
$$\Delta_n=\Delta_n\s_1(\s_1\s_2)\ldots(\s_{n-1}\ldots\s_1)
\Delta_{n}^{-1}=\Delta_{n}'''.\hspace{4cm}\square$$

Now we give some formulae for $\Delta_{n}^2$ using  braid generators $\s_i$ (the first one gives the smallest word ) and also pure braid generators $\a_{ij}$ (the last one is the smallest in the length lexicographic order given by $\a_{12}<\a_{13}<\ldots<\a_{n-1,n}$).
\begin{lemma}\label{l.21}
We have the following representations of $\Delta_{n}^2$
$$
  \begin{array}{ll}
\Delta_{n}^2  &=\s_{1}^2(\s_2\s_{1}^2\s_2)\ldots(\s_{n-1}\s_{n-2}\ldots\s_2\s_{1}^2\s_2\ldots\s_{n-2}\s_{n-1})\\
&=(\s_{n-1}\ldots\s_2\s_{1}^2\s_2\ldots\s_{n-1})\ldots(\s_3\s_2\s_{1}^2\s_2\s_3)(\s_2\s_{1}^2\s_2)\s_{1}^2\\

&=(\s_1\s_2\ldots\s_{n-1})\ldots(\s_1\s_2\s_3)(\s_1\s_2)\s_{1}^{2}(\s_2\s_1)
           (\s_3\s_2\s_1)\ldots(\s_{n-1}\ldots\s_2\s_1)\\
           &=(\s_1\ldots\s_{n-1})^n=(\s_{n-1}\ldots\s_1)^n\\
    &=(\a_{1n}\a_{2n}\ldots\a_{n-1,n})(\a_{1n-1}
\ldots\a_{n-2,n-1})\ldots(\a_{13}\a_{23})\a_{12} \\
    &=\a_{12}(\a_{13}\a_{23})\ldots(\a_{1n}\a_{2n}\ldots\a_{n-1,n}).
  \end{array}
$$
\end{lemma}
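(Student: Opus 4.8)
The plan is to prove that all six words denote one and the same element, namely $\Delta_n^2$, routing the whole argument through that single anchor so that the two generator systems — the Artin generators $\s_i$ and the pure generators $\a_{ij}$ — have to be reconciled only once. Throughout write $\delta_n=\s_1\ldots\s_{n-1}$ and $\delta_n^{\mathrm{rev}}=\s_{n-1}\ldots\s_1$, and recall from the proof of Lemma \ref{l.1} the two factorizations $\Delta_n=\delta_n\Delta_{n-1}=\Delta_{n-1}\delta_n^{\mathrm{rev}}$ (the first and second, resp.\ the second and third, forms there) together with the conjugation rule $\Delta_n\s_i=\s_{n-i}\Delta_n$. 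The latter gives $\Delta_n^2\s_i=\s_i\Delta_n^2$, so $\Delta_n^2$ is central, a fact I will lean on twice.

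First I would pin down the third word. Writing it as $L\,\s_1^2\,R$ with $L=(\s_1\ldots\s_{n-1})\ldots(\s_1\s_2)$ and $R=(\s_2\s_1)\ldots(\s_{n-1}\ldots\s_1)$, the second and first representations of $\Delta_n$ in Lemma \ref{l.1} read off exactly as $L=\Delta_n\s_1^{-1}$ and $R=\s_1^{-1}\Delta_n$, whence the third word equals $\Delta_n\s_1^{-1}\s_1^2\s_1^{-1}\Delta_n=\Delta_n^2$. Next I would treat the two pure-braid words through the telescoping identity $A_k:=\a_{1k}\a_{2k}\ldots\a_{k-1,k}=(\s_{k-1}\ldots\s_1)(\s_1\ldots\s_{k-1})=\s_{k-1}\ldots\s_2\,\s_1^2\,\s_2\ldots\s_{k-1}$, proved by downward induction on the partial products $\a_{ik}\ldots\a_{k-1,k}$: substituting $\a_{ik}=\s_{k-1}\ldots\s_{i+1}\s_i^2\s_{i+1}^{-1}\ldots\s_{k-1}^{-1}$ and using the cancellation $\s_{i+1}^{-1}\ldots\s_{k-1}^{-1}\cdot\s_{k-1}\ldots\s_{i+2}=\s_{i+1}^{-1}$ collapses the product one factor at a time. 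Since $A_k$ is exactly the $k$-th block of the first word, this identity makes the sixth word equal to the first (both $A_2A_3\ldots A_n$) and the fifth equal to the second (both $A_nA_{n-1}\ldots A_2$); four of the six words are thus accounted for once I know $A_2\ldots A_n=\Delta_n^2$.

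That remaining equality I would prove by induction on $n$ (base $A_2=\s_1^2=\Delta_2^2$). By the inductive hypothesis $A_2\ldots A_{n-1}=\Delta_{n-1}^2$, so $A_2\ldots A_n=\Delta_{n-1}^2A_n=\Delta_{n-1}^2\,\delta_n^{\mathrm{rev}}\delta_n$. Equating the two factorizations of $\Delta_n$ gives $\delta_n\Delta_{n-1}=\Delta_{n-1}\delta_n^{\mathrm{rev}}$, i.e.\ $\delta_n^{\mathrm{rev}}=\Delta_{n-1}^{-1}\delta_n\Delta_{n-1}$; substituting this turns the right-hand side into $\Delta_{n-1}\delta_n\Delta_{n-1}\delta_n=(\Delta_{n-1}\delta_n)^2$. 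Finally $\Delta_{n-1}\delta_n=\delta_n^{-1}(\delta_n\Delta_{n-1})\delta_n=\delta_n^{-1}\Delta_n\delta_n$, so $(\Delta_{n-1}\delta_n)^2=\delta_n^{-1}\Delta_n^2\delta_n=\Delta_n^2$ by centrality. This closes the induction and identifies the first, second, fifth and sixth words with $\Delta_n^2$.

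It remains to handle the fourth word and to organize the mirror symmetries economically. The identity $\delta_n^n=\Delta_n^2$ is the classical description of the full twist as a power of the Coxeter element; it follows by the same inductive scheme from $\Delta_n^2=\delta_n\Delta_{n-1}^2\delta_n^{\mathrm{rev}}$ and the shift relations of Lemma \ref{rmk.1}(a), or may simply be imported from \cite{G}. The two halves of the fourth word, and the remaining mirror equalities, I would then obtain at no extra cost from the word-reversal anti-automorphism $\rho(\s_i)=\s_i$, $\rho(xy)=\rho(y)\rho(x)$: since the first and second forms of $\Delta_n$ in Lemma \ref{l.1} are reverses of one another, $\Delta_n$ is $\rho$-invariant, hence so is $\Delta_n^2$; as each block $A_k$ is palindromic, $\rho$ carries the first word to the second, and it carries $\delta_n^n$ to $(\delta_n^{\mathrm{rev}})^n$. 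The main obstacle is exactly the single junction where the $\a$-language must meet the $\s$-language, i.e.\ the identity $A_2\ldots A_n=\Delta_n^2$ (and its Artin-power counterpart $\delta_n^n=\Delta_n^2$): a naive induction there leaves a stubborn cross term $(\Delta_{n-1}\delta_n)^2$, and the device that dissolves it is precisely the centrality of $\Delta_n^2$, which lets the conjugate $(\delta_n^{-1}\Delta_n\delta_n)^2$ be replaced by $\Delta_n^2$.
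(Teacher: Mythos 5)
Your proposal is correct in substance and rests on the same three ingredients as the paper's proof: the factorizations $\Delta_n=(\s_1\ldots\s_{n-1})\Delta_{n-1}=\Delta_{n-1}(\s_{n-1}\ldots\s_1)$ extracted from Lemma \ref{l.1}, the conjugation rule $\Delta_n\s_i=\s_{n-i}\Delta_n$ (hence centrality of $\Delta_n^2$), and the telescoping identity $\a_{1k}\a_{2k}\ldots\a_{k-1,k}=(\s_{k-1}\ldots\s_1)(\s_1\ldots\s_{k-1})$. What you do differently is organizational: the paper names the seven words $A_n,B_n,C_n,D_n,D'_n,E_n,F_n$ and computes each of $A_n$, $B_n$, $D_n$ separately, whereas you prove only $A_2\ldots A_n=\Delta_n^2$ and the third word directly, then obtain every mirror-image word ($B_n$ from $A_n$, $(\s_{n-1}\ldots\s_1)^n$ from $(\s_1\ldots\s_{n-1})^n$) in one stroke via the word-reversal anti-automorphism $\rho$, using that each block $\s_{k-1}\ldots\s_2\s_1^2\s_2\ldots\s_{k-1}$ is a palindrome and that $\Delta_n$ is $\rho$-invariant because the first two forms in Lemma \ref{l.1} are reverses of each other. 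That is a genuine economy (the paper instead handles $B_n$ by a second direct computation and $D'_n$ by conjugation with $\Delta_n$), and your identification of the third word as $(\Delta_n\s_1^{-1})\s_1^2(\s_1^{-1}\Delta_n)$ is the same observation as the paper's $C_n=\Delta_n'\Delta_n$. Your inductive step for $A_2\ldots A_n$ is slightly more roundabout than the paper's (the paper goes $\Delta_{n-1}^2\,\delta_n^{\mathrm{rev}}\delta_n=\Delta_{n-1}\Delta_n\delta_n=\Delta_{n-1}\delta_n^{\mathrm{rev}}\Delta_n=\Delta_n^2$ directly, while you pass through the cross term $(\Delta_{n-1}\delta_n)^2$ and centrality), but both are valid.

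The one place where you fall short of the paper is the fourth word. The identity $(\s_1\ldots\s_{n-1})^n=\Delta_n^2$ is exactly the item the paper proves by hand (the computation of $D_n=A_n$, sliding the trailing $\s_{n-1}$ of each bracket rightward until it re-enters as $\s_1$), and your sketch --- ``it follows by the same inductive scheme from $\Delta_n^2=\delta_n\Delta_{n-1}^2\delta_n^{\mathrm{rev}}$ and the shift relations'' --- does not close as stated: pushing $\delta_n$ through $\delta_{n-1}^{n-1}$ yields $(\s_2\ldots\s_{n-1})^{n-1}\delta_n\delta_n^{\mathrm{rev}}$, which is not visibly $\delta_n^n$ without a further rearrangement of the kind the paper carries out. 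Your fallback of importing the identity from \cite{G} is logically sound, but since the declared purpose of the Appendix is to give direct, self-contained proofs of these classical relations, you should either reproduce the paper's shuffling argument or supply the missing rearrangement explicitly.
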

\noindent\textit{Proof}. Let $A_n,B_n,C_n,D_n,D'_n,E_n,F_n$ be the above seven words respectively.
\begin{eqnarray*}
A_n&=&\s_{1}^2(\s_2\s_{1}^2\s_2)\ldots(\s_{n-1}\s_{n-2}\ldots\s_2\s_{1}^2\s_2\ldots\s_{n-2}\s_{n-1})\\
&=&\Delta_{n-1}^2(\s_{n-1}\ldots\s_2\s_1)(\s_1\ldots\s_{n-1})\\
&=&\Delta_{n-1}(\Delta_n)(\s_1\ldots\s_{n-1})=\Delta_{n-1}(\s_{n-1}\ldots\s_1)\Delta_n=\Delta_n^2
\end{eqnarray*}
\begin{eqnarray*}
B_n&=&(\s_{n-1}\ldots\s_2\s_{1}^2\s_2\ldots\s_{n-1})\ldots(\s_3\s_2\s_{1}^2\s_2\s_3)(\s_2\s_{1}^2\s_2)\s_{1}^2\\
&=&(\s_{n-1}\ldots\s_2\s_{1})(\s_1\ldots\s_{n-1})\Delta_{n-1}^2\\
&=&(\s_{n-1}\ldots\s_2\s_{1})(\s_1\ldots\s_{n-1})\Delta'_{n-1}\Delta_{n-1}'\\
&=&(\s_{n-1}\ldots\s_2\s_{1})\Delta_{n}'\Delta_{n-1}'=\Delta_{n}(\s_1\ldots\s_{n-1})\Delta_{n-1}'=\Delta_{n}^2
\end{eqnarray*}
and
$$C_n=\Delta'_n.\Delta_n=\Delta_{n}^2.$$
In the product of $n$ brackets of $D_n$ we move $\s_{n-1}$ from the first bracket $n-2$ times (just before the last bracket) and this becomes $\s_1$, then we continue this process with $\s_{n-1}$ of the second  bracket and so on; we obtain
$$
\begin{array}{ll}
D_n& =(\s_1\s_2\ldots\s_{n-1})^n \\
      & = (\s_1\ldots\s_{n-2})(\s_{1}\ldots\s_{n-1})\ldots(\s_{1}\ldots\s_{n-1})\s_1(\s_{1}\ldots\s_{n-1})\\
       &=(\s_1\ldots\s_{n-2})(\s_{1}\ldots\s_{n-2})\ldots(\s_{1}\ldots\s_{n-1})\s_2\s_1(\s_{1}\ldots\s_{n-1})=\,\,\ldots\\
      &=(\s_1\ldots\s_{n-2})(\s_1\ldots\s_{n-2})\ldots(\s_1\ldots\s_{n-2})(\s_{n-1}\ldots\s_1)(\s_1\ldots\s_{n-2})\\
      &=D_{n-1}(\s_{n-1}\ldots\s_1)(\s_1\ldots\s_{n-2})\\
      &=A_{n-1}(\s_{n-1}\ldots\s_1)(\s_1\ldots\s_{n-2})=A_n.
      \end{array}
$$
$D'_n=\Delta_nD_{n}\Delta_{n}^{-1}=D_n.$\\

Using $\a_{ij}=\s_{j-1}\s_{j-2}\ldots\s_{i+1}\s_{i}^2\s_{i+1}^{-1}\ldots\s_{j-1}^{-1}$, we obtain after cancelation
$$\a_{1n}\a_{2n}\ldots\a_{n-1,n}=(\s_{n-1}\s_{n-2}\ldots\s_1)(\s_1\s_2\ldots\s_{n-1})$$
and
\begin{eqnarray*}
E_n&=&(\a_{1n}\a_{2n}\ldots\a_{n-1,n})(\a_{1n-1}\ldots\a_{n-2,n-1})\ldots(\a_{13}\a_{23})\a_{12}\\
&=&(\s_{n-1}\s_{n-2}..\s_{1})(\s_1..\s_{n-1})(\s_{n-2}..\s_{1})(\s_{1}..\s_{n-2})\ldots(\s_2\s_1)(\s_1\s_2)\s_{1}^2\\
&=&B_n,
\end{eqnarray*}
and
$$F_n=F_{n-1}(\a_{1n}\ldots\a_{n-1,n})=B_{n-1}(\s_{n-1}\s_{n-2}\ldots\s_1)(\s_{1}\s_2\ldots\s_{n-1})=B_n\,.\,\,\square$$

\medskip

\begin{thebibliography}{10}
\bibitem[A]{A} Artin, E. (1947), \emph{Theory of braids}, Ann. of Math. (2)\textbf{48},
                    pp. 101-126.
\bibitem[B1]{B1} Berger, M. (1987), \emph{Geometry I}, Springer-Verlarg Berlin.
\bibitem[B2]{B2} Birman, Joan S. (1974), \emph{Braids, Links, and Mapping Class Groups}, Annals of Mathematics
                    vol. \textbf{82}, Princeton University Press.
\bibitem[F]{F} Fadell, E.R, Husseini, S.Y. (2001), \emph{Geometry and Topology of Configuration Spaces}, Springer Monographs in Mathematics, Springer-Verlarg Berlin.
\bibitem[G]{G} Garside, F.A. (1969), \emph{The braid groups and other groups}, Quat. J. of Math. Oxford, $2^{e}$ ser. \textbf{20}, 235-254.
\bibitem[H]{H} Hatcher, A. (2002), \emph{Algebraic Topology}, Cambridge University Press.


\bibitem[M1]{M1} Moran, S. (1983), \emph{The Mathematical Theory of Knots and Braids}, North Holland Mathematics Studies, Vol 82 (Elsevier, Amsterdam).
\bibitem[M2]{M2} Moulton,  V. L. (1998), \emph{Vector Braids}, J. Pure Appl. Algebra, \textbf{131}, no. 3, 245-296.
\end{thebibliography}
\end{document}